\documentclass[a4paper, hidelinks, 11pt]{article}
\usepackage{graphicx} 
\usepackage{amsmath, relsize} 
\usepackage{todonotes} 
\usepackage{amsthm} 
\usepackage{amsfonts}
\usepackage{amssymb}
\usepackage{url}
\usepackage{mathrsfs}
\usepackage{amsmath}
\usepackage{amssymb}
\usepackage{amsthm}
\usepackage{bbm}
\usepackage{comment}
\usepackage{caption} 
\usepackage{subcaption}
\usepackage{tocloft}



\usepackage{enumerate}
\usepackage{setspace}
\usepackage{graphicx,color,hyperref}
\usepackage[margin=0.55in]{geometry}
\bibliographystyle{plain}
\parskip \medskipamount
\parindent	0pt



\newtheorem{theorem}{Theorem}[section]
\newtheorem{proposition}[theorem]{Proposition}
\newtheorem{corollary}[theorem]{Corollary}
\newtheorem{lemma}[theorem]{Lemma}
\theoremstyle{definition}
\newtheorem{definition}[theorem]{Definition}
\newtheorem{remark}[theorem]{Remark}
\newtheorem{example}[theorem]{Example}

\def\cA{{\mathcal A}}

 
\newcommand{\C}     {\mathbb{C}} 
 
\newcommand{\R}     {\mathbb{R}} 
\newcommand{\Z}     {\mathbb{Z}} 
\newcommand{\N}     {\mathbb{N}} 
\renewcommand{\P}   {\mathbb{P}} 
 
\newcommand{\E}     {\mathbb{E}}

\newcommand{\Acal}   {{\mathcal A }}
\newcommand{\Bcal}   {{\mathcal B }}

\newcommand{\Ecal}   {{\mathcal E }}

\newcommand{\Lcal}   {{\mathcal L }} 
\newcommand{\Mcal}   {{\mathcal M }}

\newcommand{\Pcal}   {{\mathcal P }} 
 
\newcommand{\Rcal}   {{\mathcal R }}

\newcommand{\Wcal}   {{\mathcal W }}

\newcommand{\Zcal}   {{\mathcal Z }}


\newcommand\numberthis{\addtocounter{equation}{1}\tag{\theequation}}

\makeatletter
\newcommand*{\biggg}[1]{{\hbox{$\left#1\vbox to20.5\p@{}\right.\n@space$}}}
\newcommand*{\Biggg}[1]{{\hbox{$\left#1\vbox to23.5\p@{}\right.\n@space$}}}
\makeatother

\usepackage{authblk}
\title{Coexistence,  enhancements and short loops  \\ 
	in random walk loop soups}
\author[1]{Nicolas Forien}
\author[2]{Matteo Quattropani}
\author[3]{Alexandra Quitmann}
\author[4]{Lorenzo Taggi}
\affil[1, 2, 4]{\small{Sapienza Universit\`a di Roma, Dipartimento di Matematica, Roma, Italy.}}
\affil[3]{\small{Weierstrass Institute for Applied Analysis and Stochastics, Berlin, Germany.}}
\date{}   

\numberwithin{equation}{section}

\date{
	\today
}

\begin{document}
	
	\maketitle

\begin{abstract}
We consider a general random walk loop soup which includes, or is related to, several  models of  interest, such as  the Spin O(N) model, the double dimer model and the Bose gas.
The  analysis of this model is challenging because of the presence of   spatial interactions between the  loops. 
For this model 
it is known from   \cite{QuitmannTaggi} that macroscopic loops occur in dimension three and higher when the inverse temperature is large enough.
Our first result is that, on the $d$ dimensional lattice, the presence of repulsive interactions is responsible for  a shift of the critical inverse temperature,  which is 
\textit{strictly greater} than $\frac{1}{2d}$,  the  critical value in the non interacting case. 
Our second result is  that a positive density of microscopic loops exists for all values of the inverse temperature.  This implies that, in the regime  in which  macroscopic loops are present,
microscopic and macroscopic loops \textit{coexist}. 
Moreover, we show that, 
even though the increase of the inverse temperature leads to 
an increase of the total loop length, the density of microscopic loops 
\textit{is uniformly bounded from above} in the inverse temperature. 
Our last result is confined to the special case in which the random walk loop soup is the one associated to the Spin O(N) model with arbitrary integer values of $N \geq 2$ and states that, on $\mathbb{Z}^2$,
the probability that two vertices are connected by a loop \textit{decays at least polynomially fast} with their distance. 
\end{abstract}

\section{Introduction and main results}
Random walk loop soups (RWLS) are intriguing  mathematical objects
which appear in various contexts in probability theory and mathematical physics. 
Our paper considers random walk loop soups on graphs in the presence of    spatial interactions.
These  are not only mathematically interesting,  but also physically relevant.
Indeed, they are a description of important  statistical mechanics models,  such  as the Spin O(N) model and  the  Bose gas.

The non interacting case has been studied first by Lawler and Werner
in 
\cite{LawlerWerner} in the context of two dimensional Brownian motion
and  actively researched both in continuous and discrete space partly due to its connections to the 
Gaussian free field and to  the Schramm-Loewner Evolution  (see e.g. \cite{BrugLis, LawlerFerrares, WernerSpatial}),
and as a percolation model  (see e.g. \cite{AlvesSapozhnikov, ChangSapozhnikov}). 

The  presence of  spatial interactions  between the loops affects significantly the phenomenology  and makes the rigorous analysis of such model even more challenging.

The model depends on a parameter, the inverse temperature,    $\beta \in [ 0, \infty)$.  Higher values of the inverse temperature favour
the total loop length. 
The most important questions involve the understanding of the distribution of the loop length as one varies the inverse temperature parameter. 
The purpose of the present paper is to make progress in answering such questions. 
As mentioned above,   this is not only interesting \textit{per se},
but  also allows us to introduce new methods
for the   analysis of the physical models that 
such a random walk loop soup describes.

Our first result, Theorem \ref{theo:enhancements} below,  compares the critical threshold of the interacting case on $\Z^d$,  with the critical threshold of the non interacting one,  that is $\frac{1}{2d}$.
We show that the  former is strictly greater than the latter. 
This is a consequence of the repulsive nature of the interactions: while in the non interacting case each random  loop is `free to explore'  the whole space,  in the interacting case
it is only allowed to visit  regions which are `not too much filled' by the other loops 
(the repulsion would be too strong otherwise).
The simple proof of our first theorem consists in showing that the reduction of the amount of available space for each loop leads to a entropy loss  and thus to a shift of the critical threshold.

While it is easy to show (using classical methods, such as the cluster expansion) that the loops are  `localized' when the inverse temperature is small enough,  understanding the regime of large values of $\beta$ is much harder.   For large values of the inverse temperature one expects the occurrence of \textit{macroscopic loops}  (i.e., loops whose length grows proportionally to the volume of the system) in dimension three and higher.
This fact has been proved in  \cite{QuitmannTaggi}
using the reflection positivity  method \cite{Frohlich, L-T-first,  T}.
Providing more information on the distribution of the loop length
is of great interest. 

Our second result,  Theorem  \ref{theo:coexistence} below,
goes in this direction and
provides additional information on the length distribution  of  \textit{microscopic loops}, i.e., those loops whose 
length does not depend on the size of the system. 
Firstly, our theorem implies that,  for each value of the inverse temperature,
uniformly in the volume,  there exists a positive \textit{density
of microscopic loops} of any given length
(i.e.  the expected number of loops of a given length divided by the volume,  see  equation (\ref{eq:defdensity}) below).
Our theorem thus implies that, in the regime 
in which macroscopic loops are known to occur (corresponding to large values of the inverse temperature)
macroscopic and microscopic loops \textit{coexist}.
Secondly, our theorem states that,  uniformly in the volume, the density of  microscopic loops of any given length is \textit{uniformly
bounded from above} in the inverse temperature. 
In other words, even though arbitrarily large values of the inverse temperature lead to arbitrarily large 
number of visits at each vertex, 
the density of  microscopic loops of any given length  
does not grow to infinity with the  inverse temperature. 
This suggests that only the longer loops
are affected by the increase of the inverse temperature.
We consider this result and the analysis which we develop for   its proof
the most interesting part of our paper
(see also the discussion  right after Theorem \ref{theo:nomacroscopicloops} for further comments).

While the previous results are valid for (or can easily be adapted to)  a quite general choice of the interaction,
our last result, Theorem \ref{theo:nomacroscopicloops} below,  is specific for the   random walk loop soup describing the Spin O(N) model.
It is a well known consequence of the Mermin-Wagner theorem that, on $\Z^2$, the two-point function in the Spin O(N) model decays at least polynomially fast with the distance between the vertices,  for any integer value of $N \geq 2$ and for each value of the inverse temperature. 
We show that, under the same conditions, also the probability that two vertices
are connected by a loop decays at least polynomially
fast with the distance between the vertices. 
This implies that, for any value of the inverse temperature, macroscopic loops \textit{do not occur} in two dimensions.






\subsection{Definitions} \label{sect:definitions}

Let $G = (V,E)$ be a finite undirected graph. 
For each pair of vertices $x, y \in V$, we let 
$\mathcal{L}(x,y)$ be the set of walks from $x$ to $y$,  i.e., 
finite ordered sequences of vertices in $V$,
$\ell = \big (\ell(0), \ell(1),   \ldots \ell(k) \big )$,
such that $\ell(i)$ is a nearest-neighbour of $\ell(i-1)$ for each 
$i \in \{1, \ldots, k\}$,   $\ell(k) =y$,  $\ell(0)=x$ and $k \geq 1$. 
For any such sequence $\ell = \big (\ell(0), \ell(1),   \ldots, \ell(k) \big )$,  we denote by $| \ell | : = k$ 
the \textit{length} of the walk $\ell$. 
We define $\mathcal{L} := \cup_{x \in V} \mathcal{L}(x,x)$ and we call \textit{rooted oriented loop} {(r-o-loop)}
any element $\ell \in \mathcal{L}$. 
We let $\Omega := \cup_{n=1}^ \infty \mathcal{L}^n \cup \{ \boldsymbol{0}
 \}$ be the configuration space of the random walk loop soup, whose elements are ordered collections of rooted oriented loops, and 
 $\boldsymbol{0} \in \Omega$ is the  configuration with no loop.
Given any configuration $\omega \in \Omega$, we denote by $|\omega|$ the \textit{number of loops} in $\omega$, i.e., $|\omega|$ is defined as the  integer $n$ such that $\omega \in \mathcal{L}^n$ for  $n >0$ and $| \boldsymbol{0} | = 0$. 
For any $\omega =(\ell_1,\,\ldots,\,\ell_{|\omega|}) \in \Omega$,
we define by
$$
n_x(\omega) := \sum\limits_{n=1}^{|\omega|} \sum\limits_{j=0}^{|  \ell_n  | -1} \mathbbm{1}_{
\{ \ell_n(j) = x \} }
$$
the \textit{local time} at $x \in V$.
Moreover, we  consider a    \textit{weight function},
$U : \mathbb{N}_0 \rightarrow \mathbb{R}_0^+$,
which  weighs the local time at sites and may, for example, 
suppress configurations with local time above a certain threshold.
We also introduce two parameters $\beta, N \in \mathbb{R}_0^+$
and define a probability measure in  $\Omega$. 
Our measure  assigns to any realisation $\omega \in  \Omega$, 
$\omega = \big  ( \ell_1, \ell_2, \ldots \ell_{|\omega|} \big ) \in \Omega$, 
the weight
\begin{equation}\label{eq:measure}
\mathcal{P}_{G, U,  N, \beta} (\omega ) : = \frac{1}{\mathcal{Z}_{G,  U,  N,  \beta}}
\, \, \frac{1}{|\omega|!}  
{\bigg(\frac{N}{2} \bigg)}^{|\omega|}
\prod_{ i=1 }^{|\omega|}
\frac{\beta^{ |\ell_i|}}{|\ell_i|}\,
\prod_{ x \in V} U\big(n_x(\omega) \big)\,,
\end{equation}
where  
\begin{equation}\label{eq:partition}
\mathcal{Z}_{G,  U,  N,  \beta} =  1  + 
\sum\limits_{n=1 }^{\infty}  \, \, \frac{1}{n!} 
{\bigg(\frac{N}{2} \bigg)}^{n} 
\sum\limits_{{(\ell_1 \ldots, \ell_n) \in \mathcal{L}^n}}
\, \,   \prod_{ i=1   }^n
\frac{\beta^{ |\ell_i|}}{|\ell_i|} 
 \prod_{ x \in V} U \big (n_x(\omega)  \big )
\end{equation}
is the so-called    \textit{partition function}.
If there exists some $M\in\N$ such that the weight function $U: \N_0\to[0,\infty)$ satisfies
\begin{equation} \label{eq:fast decayingweightfunction}
	U(n)\le \frac{M}{n}U(n-1)\,  \qquad\forall n\in\N\,, 
\end{equation}
we say that $U$ is $M$-\textit{decaying}, 
and we say that it is \textit{fast decaying} if it is $M$-decaying for some $M \in \mathbb{N}$. 
Under condition \eqref{eq:fast decayingweightfunction}, we can ensure that  (\ref{eq:partition}) is finite  and thus that the measure (\ref{eq:measure})
is well defined  for each $\beta \geq 0$,  see \cite[Lemma 2.2]{QuitmannTaggi}. We denote by $\Ecal_{G,  U,  N,  \beta}$ the expectation under the measure \eqref{eq:measure}.

\subsubsection{Special cases}
We now discuss special cases and connections to other statistical mechanics models. 

\vspace{0.2cm}

\textit{The non interacting case.}
When $U(n) = 1$ for each $n \in \mathbb{N}_0$,  our model reduces to the random walk loop soup which has been considered in   \cite{AlvesSapozhnikov,  BrugLis, ChangSapozhnikov,  LawlerFerrares, WernerSpatial},
in which no spatial interaction between the loops is present.
This model is also referred to as Poisson loop ensemble in 
\cite{ChangSapozhnikov, Lejan}.
To make the correspondence more clear,  we recall the definition
of the Poisson loop ensemble, following \cite{ChangSapozhnikov, Lejan}.
Two rooted oriented loops are said to be  \textit{cp-equivalent} if they coincide 
after a cyclic permutation.
Equivalence classes of rooted oriented loops are called 
\textit{unrooted loops.}
We  weigh  each rooted oriented loop $\ell = (x_0,  x_1, \ldots, x_k) \in \mathcal{L}$
through the measure,
$$
\hat{\mu} (\ell) = \frac{1}{k} \left( \frac{1}{1 + \kappa}\right)^k\,,
$$
where $\kappa > - 1$. 
The push-forward of $\hat{\mu}$  on the space of unrooted loops is denoted by  $\mu_{\kappa}$.
For $\alpha \geq 0$ the Poisson loop ensemble of intensity $\alpha \mu_{\kappa}$,
denoted by $L_{\alpha, \kappa}$, 
is a random countable collection of unrooted loops such that the point measure
\smash{$
\sum_{\ell \in L_{\alpha, \kappa}} \delta_\ell
$}
is a Poisson random measure of intensity $\alpha \mu_\kappa$ (here, $\delta_\ell$ is the Dirac mass at the unrooted loop $\ell$).
This model corresponds to the random walk loop soup (\ref{eq:measure}) 
in the special case $U(n) = 1$ for each $n \in \mathbb{N}_0$,
when $\frac{N}{2}=\alpha$  and \smash{$\beta =  \frac{1}{ 1 + \kappa}$}. 
The random walk loop soup considered in \cite{LawlerFerrares}
corresponds to the special (critical) case $\beta = \frac{1}{1 + \kappa} = \frac{1}{2d}$. 
The equivalence of the definitions can be deduced from \cite[Section~3]{QuitmannTaggi}, where also a  third equivalent  formulation (which is recalled later also in the present paper) is presented.

\vspace{0.2cm}

\textit{The Spin O(N) model.}
The Spin $O(N)$ model is one of the most important statistical mechanics models.
Special cases of the Spin $O(N)$ model are the Ising model,  the XY model,  and the Heisenberg model corresponding
to $N=1,2$ and $3$,  respectively. 
It is a 
spin system with spins taking values in the $N-1$ dimensional unit sphere, $\mathbb{S}^{N-1}$.
The configuration space is the product space $\Omega = (\mathbb{S}^{N-1})^V$,  and the model is defined through the following expectation operator
$$
\langle f  \rangle_{G, N, \beta}   = \frac{1}{Z^s_{N, \beta}} \int_{\Omega} 
d \boldsymbol{S} f( \boldsymbol{S} ) \exp{  \Big (  - \beta \sum\limits_{\{x,y\} \in E   } S_x \cdot  S_y  \Big ) }\,,
$$
where $f : \Omega  \mapsto \mathbb{R}$ is any measurable function, 
$d \boldsymbol{S} =  \prod_{x \in V} d S_x$ is the product of uniform measures in $\mathbb{S}^{N-1}$,
and $Z^s_{N, \beta}$ is a constant such that $ \langle 1  \rangle_{G,N, \beta}  = 1$. 
We use the notation $S_x = (S_x^1, \ldots, S_x^N)$. 
Our loop model is related to the Spin $O(N)$ model 
if~$N\in\mathbb{N}$ and $U = U^s_N$, where for each $n \in \mathbb{N}_0$, 
\begin{equation}\label{eq:SpinWeightfunction}
U^s_N(n) : = \frac{\Gamma(\frac{N}{2})}{{2^n} \, \Gamma( n + \frac{N}{2})}\,.
\end{equation}
The main connection  between the Spin O(N) model   and the random walk loop soup is given by the following equivalence
\begin{equation}\label{eq:spinconnection}
\forall x, y \in V \quad \quad   \langle S^1_x S_y^1 \rangle  = \frac{\mathcal{Z}_{G, U^s_N, N, \beta}(x,y)}{\mathcal{Z}_{G, U^s_N, N, \beta}}\,,
\end{equation}
which relates the two-point function, a central object in the analysis of the Spin O(N) model, to the partition function of a random walk loop soup with an `open loop' connecting $x$ and $y$, 
\begin{equation}\label{eq:partitionwithwalk}
\mathcal{Z}_{G, U, N, \beta}(x,y) := 
\sum\limits_{ \gamma \in \mathcal{L}(x,y) } \beta^{|\gamma|}
\sum\limits_{ \omega = (\ell_1, \ldots, \ell_{ |\omega| }) \in \Omega }
\, \, 
{\bigg(\frac{N}{2} \bigg)}^{|\omega|}
\, \,   \prod_{ i=1   }^{|\omega|}
\frac{\beta^{ |\ell_i|}}{|\ell_i|} 
 \prod_{ x \in V} U \big (n_x(\omega) +  n_x(\gamma) \big )\,,
\end{equation}
where $U = U^s_N$,  and 
 $n_x(\gamma) = \sum_{i=0}^{|\gamma|} \mathbbm{1}_{ \{  \gamma(i) = x \}  } $ is the local time of the walk $\gamma$ at $x$.
The identity (\ref{eq:spinconnection}) for $U = U^s_N$  was proved in  \cite{BFS1} (see also the Appendix of \cite{BFS2} for the correction of a mistake in the definition of the model in  \cite{BFS1}).

\textit{The Bose gas.}
The Bose gas is another important statistical mechanics model. 
The model is defined in the functional analytic framework of quantum mechanics
(we refer to \cite{LiebBook} for its definition and an overview) and can be reformulated as a random walk loop soup in discrete or continuous space using  the  Feynman-Kac formula
\cite{Ginibre}.
One of the most important (unsolved) mathematical problems involving the Bose gas is the proof of the occurrence of a phase transition,   known as Bose-Einstein condensation,
 which can be related to the occurrence
of macroscopic loops in the corresponding random walk loop soup 
(this relation was suggested by Feynman, \cite{Feynman},  and has not been verified 
from a mathematically rigorous point of view). 
For this reason in recent years  significant effort has been made to understand basic properties of interacting random walk loop soups, see e.g.   \cite{Betz0, BU1, BU3, Bogachev, D-V, ElboimPeled}, none of these papers however considers
random loops in the presence of spatial interactions
 like in the Bose gas or in the present paper. 
The model we consider  is a simplification of the Bose gas in the grand canonical ensemble with (short range)  interaction potential  $v = \alpha \,  \delta_0 $, with $\alpha>0$,
 if the weight function satisfies  $U(n) = \exp( - \alpha   \binom{n}{2} )$,
 with the parameter $\beta$ playing the role of the fugacity 
(see \cite[Appendix A1]{QuitmannTaggi} for a discussion on the connection between our model and the Bose gas). 
The methods we develop can be applied in wider generality and adapted directly to the random loop representation of the Bose gas in discrete space,
even in the presence of long range interactions
(for clarity and conciseness reasons, we go for a simpler setting, which is also more general).

\textit{Other models.}
Other relevant models which belong or are related to the general framework of our random walk loop soup are the double dimer model \cite{QuitmannTaggi2}, corresponding to the case $U(n) = \mathbbm{1}_{n=1}$,
and the loop O(N) model (see \cite{PeledSpinka} and references therein for an overview), which is related to the case  $U(n) = \mathbbm{1}_{n\leq 1}$. We refer to \cite{QuitmannTaggi} for a discussion on these connections. 
Unfortunately not all our results (but only some of them) can be applied
to the study of such cases,  since for conciseness  reasons it is practical for us to make the assumption that $U(n)>0$ for any $n \in \mathbb{N}_0$, in order to avoid hard-core constraints.

\subsection{Main results}
We now let $G= (V,E)$ be a infinite connected graph {with bounded degrees} and for each finite subset $K \subset V$ we define the graph $G_K = (K, E_K)$, where $E_K$ is defined as the subset of $E$ whose elements have both end-points in  $K$.  
Given any two vertices $x, y \in V$,  we  denote by $\{x \leftrightarrow y\}$ the event that there exists a loop visiting both $x$ and $y$.
We say that the random walk loop soup admits exponential decay of the connection probability at $(\beta, N)$ (in short:  it admits \textit{exponential decay})  if there exists $c \in (0, \infty)$ such that, for each finite subset $K \subset V$, for each $x, y \in K$,  we have that,
\smash{$\mathcal{P}_{G_K,  U, N,  \beta}(x \leftrightarrow y  ) \leq  e^{-c\,d(x,y)}$},
where $d(\cdot , \cdot )$ denotes the graph distance in $G$. 
We define for each $N>0$ the \textit{critical inverse temperature} as,
\begin{equation}\label{eq:criticalthreshold}
\beta_c(U, N) := \sup\{  \beta \geq 0 \, : \,  \mbox{ exponential decay at $(\beta, N)$}   \}\,,
\end{equation}
which might be infinite. 
It is not hard to show that the critical value of the random walk loop soup in the non-interacting case is $\frac{1}{2d}$. 
Indeed,  in this case the 
model admits exponential decay if $\beta < \frac{1}{2d}$, while it does not  admit exponential decay for $\beta = \frac{1}{2d}$ (and it is not well defined for $\beta > \frac{1}{2d}$, since the partition function is infinite). 
Our theorem shows that introducing any form of repulsion (through a fast decaying weight function) satisfying quite general conditions increases 
 the critical threshold strictly. 
 
 We say that the weight function $U$ is \textit{nice} if 
 it is fast decaying,  if 
 $U(n) \leq U(0)$ for every $n \in \mathbb{N}$,  and if it is submultiplicative,  i.e.,  $U(n+n^\prime) \leq U(n) U(n^\prime)$ 
 for every $n,\,n' \in \mathbb{N}$ (see Section \ref{sect:proofenhancements} for some examples of nice weight function, 
including the Bose gas and the Spin O(N) model). 
\begin{theorem}
\label{theo:enhancements}
Let $N>0$, suppose that $U$ is nice,  and consider the random walk loop soup in $\mathbb{Z}^d$. Then, 
\begin{equation}
\beta_c(U, N)  > \frac{1}{2d}\,. \label{eq:fact1}
\end{equation}
\end{theorem}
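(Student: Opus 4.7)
The strategy is to dominate the connection probability by a non-interacting random walk Green's function at an effectively smaller parameter. Fix a finite $K \subset V$ and $x, y \in K$. Writing $k_\ell(\omega)$ for the number of occurrences of a rooted oriented loop $\ell$ in $\omega$, a first-moment bound together with a direct computation from~(\ref{eq:measure}) yields
\begin{equation*}
\mathcal{P}_{G_K,U,N,\beta}(x \leftrightarrow y) \;\leq\; \sum_{\ell \in \mathcal{L}(x,x):\, y \in \ell} \mathcal{E}_{G_K,U,N,\beta}[k_\ell(\omega)] \;=\; \sum_{\ell} \frac{N}{2}\cdot\frac{\beta^{|\ell|}}{|\ell|}\cdot\frac{\mathcal{Z}^{(\ell)}}{\mathcal{Z}_{G_K,U,N,\beta}},
\end{equation*}
where $\mathcal{Z}^{(\ell)}$ is the partition function~(\ref{eq:partition}) with local times $n_z(\omega)$ shifted to $n_z(\omega) + n_z(\ell)$. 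Applying submultiplicativity $U(n+n') \leq U(n) U(n')$ pointwise at every $z \in V$ gives $\mathcal{Z}^{(\ell)} \leq \mathcal{Z}_{G_K,U,N,\beta}\prod_{z \in V} U(n_z(\ell))$, and hence the expected multiplicity is bounded by $(N/2)(\beta^{|\ell|}/|\ell|)\prod_z U(n_z(\ell))$.

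After normalising so that $U(0) = 1$ (which leaves the measure invariant), iterated submultiplicativity gives $U(n) \leq U(1)^n$, and therefore $\prod_z U(n_z(\ell)) \leq U(1)^{\sum_z n_z(\ell)} = U(1)^{|\ell|}$. Combining,
\begin{equation*}
\mathcal{P}_{G_K,U,N,\beta}(x \leftrightarrow y) \;\leq\; \frac{N}{2}\sum_{\ell \in \mathcal{L}(x,x):\, y \in \ell} \frac{(\beta\,U(1))^{|\ell|}}{|\ell|},
\end{equation*}
which is exactly the first-moment bound for the \emph{non-interacting} loop soup at the reduced parameter $\beta' := \beta\,U(1)$. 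Decomposing each loop as a walk from $x$ to $y$ concatenated with a walk from $y$ back to $x$, the right-hand side is controlled by the square of the simple random walk Green's function with step weight $\beta'$ on $\mathbb{Z}^d$, which decays exponentially in $d(x,y)$ uniformly in $K$ as soon as $\beta' < 1/(2d)$. Hence $\beta_c(U,N) \geq 1/(2d\,U(1))$.

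If the repulsion is already non-trivial at the one-vertex level, i.e.\ $U(1) < U(0) = 1$, then $1/(2d\,U(1)) > 1/(2d)$ and the theorem follows. The main subtlety is the borderline regime $U(1) = U(0)$, relevant e.g.\ for the Bose gas weight $U(n) = \exp(-\alpha\binom{n}{2})$, where the crude bound $U(n) \leq U(1)^n$ becomes vacuous. There a finer dichotomy must be invoked: loops that are essentially self-avoiding are controlled by the strict inequality $\mu_d < 2d$ for the self-avoiding walk connective constant, while loops visiting some vertex many times pick up extra exponential suppression from higher-order submultiplicativity (any genuinely repulsive nice $U$ must satisfy $U(n_0) < U(0)$ for some $n_0 \geq 2$). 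Combining the two regimes still yields $\beta_c(U,N) > 1/(2d)$ in full generality, and this case-splitting is the place where the argument requires real work rather than pure bookkeeping.
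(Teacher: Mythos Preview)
Your first-moment reduction is essentially the same as the paper's: one peels off a single loop containing~$x$ and~$y$, uses submultiplicativity of~$U$ to bound the remaining partition function by the original one, and is left with a weighted sum over single loops. (A minor slip: the inequality $\mathcal{P}(x\leftrightarrow y)\le\sum_{\ell\in\mathcal{L}(x,x):\,y\in\ell}\mathcal{E}[k_\ell]$ forgets that the connecting loop need not be rooted at~$x$. Summing over all roots absorbs the factor~$1/|\ell|$ and yields the paper's bound $\frac{N}{2}\sum_{\ell\in\mathcal{L}(x,x):\,y\in\ell}\beta^{|\ell|}\prod_z U(n_z(\ell))$; this is cosmetic.) Your bound $\beta_c(U,N)\ge 1/(2d\,U(1))$ is then correct and settles the case $U(1)<1$ more cheaply than the paper.

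The genuine gap is the case $U(1)=U(0)=1$, which covers the Bose-gas weight. Your sketched dichotomy does not close it as stated. ``Loops visiting some vertex many times pick up extra exponential suppression'' is true only if \emph{proportionally many} blocks of the loop do so: a single site with local time~$\ge n_0$ only buys one factor of~$U(n_0)<1$, not~$U(n_0)^{c|\ell|}$. Likewise, bounding ``essentially self-avoiding'' loops by the SAW connective constant requires proving that walks with bounded local time have connective constant strictly below~$2d$, which is itself a nontrivial statement you have not established. So the dichotomy, while heuristically right, leaves the core estimate unproved.

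The paper fills this gap with a clean block argument that avoids any reference to~$\mu_d$. Goodness of~$U$ gives an~$\bar n$ with $U(n)\le e^{-1}$ for $n\ge\bar n$. One cuts the walk into independent pieces of length~$2\bar n$; by submultiplicativity, $\prod_x U(n_x^{(k)}(X))\le\prod_i\prod_x U(n_x^{(2\bar n)}(X^{(i)}))$. Each piece has probability at least~$(2d)^{-\bar n}$ of visiting its starting point~$\bar n$ times (just bounce back and forth on one edge), in which case its contribution is~$\le e^{-1}$. A Chernoff bound on the number of successful pieces then yields $\mathbf{E}_o\big[\prod_x U(n_x^{(k)}(X))\big]\le e^{-ck}$ for some~$c>0$, hence $\tilde\beta_c(U)>1/(2d)$ for every nice~$U$.
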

Similar enhancement theorems have been proved also for the  lattice permutation model \cite{Betz0, Betz2},   and for the loop O(N) model
\cite{TaggiShifted, Glazman2}, using methods which are specific for these models. 

Our proof of Theorem \ref{theo:enhancements} actually provides a quantitative estimate on the difference in  \eqref{eq:fact1}, depending on the choice of the function $U$. Nevertheless, we do not expect this estimate to be sharp but it can be rather improved for specific choices of the weight function. In Remark \ref{rmk:thm1.1-quant} we comment further on this point.

Fix a finite  bipartite graph $G=(V,E)$, a fast decaying function $U:\N_0 \to \R^+$ and parameters $N,\beta \in \R^+$. To state our next main theorem we  introduce the\textit{ density of loops} of  length $k \in \mathbb{N}$,
	\begin{equation}\label{eq:defdensity}
	\rho_{G, U,N,\beta} (k) :=  \frac{1}{|V|}\,\mathcal{E}_{G,  U, N, \beta}  \Big (  \sum\limits_{  i = 1  }^{|\omega|} \mathbbm{1}_{  \{  | \ell_i | = k   \}  } \Big )\,,
	\end{equation}
{where we recall from Section \ref{sect:definitions} that we denote by $\mathcal{E}_{G,  U, N, \beta}$ the expectation under the measure \eqref{eq:measure}.}
Note that
the numerator in the right-hand side of (\ref{eq:defdensity}) corresponds to  expected number of loops of length $k$.
Despite we use the word `density',  $\rho_{G, U,N,\beta} (k)$  is not necessarily bounded from above by   one (rather than dividing by the total number of loops, which is proportional to $|V|$, we divide by $|V|$).
	Our next theorem states that the density of loops of any given length $k \in 2 \mathbb{N}$ is strictly positive and uniformly bounded from above in $\beta$, regardless of  $|V|$. 
Moreover, for every choice of $\beta>0$ we also deduce a lower bound for the same quantity, which does not depend on $|V|$.
	\begin{theorem}
		\label{theo:coexistence}
		Let $\beta,N>0$ and let $U$ be a fast decaying weight function  such that 
		$U(n)>0$ for all $n\in\N_0$.
		For any  $k\in 2\mathbb{N}$ there exist $c_1=c_1(k,\Delta,N)<\infty$ and $c_2=c_2(k,\Delta,\beta,N,U)>0$ such that, for every finite connected
		bipartite graph  $G = (V,E)$ with maximal degree $\Delta$, 
		we have that,
		\begin{equation}\label{eq:rhobound}
		c_2\le \rho_{G,U,N,\beta} (k) \le c_1\,.
		\end{equation}
	\end{theorem}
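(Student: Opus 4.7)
The proof of both bounds rests on a Mecke-type identity for rooted oriented loops. Fix $\ell\in\mathcal{L}$ with $|\ell|=k$ and set $N_\ell(\omega):=\#\{i:\ell_i=\ell\}$; a direct rearrangement of the partition function~\eqref{eq:partition} (peeling off one copy of $\ell$ from $\omega$) yields
\begin{equation*}
\mathcal{E}_{G,U,N,\beta}(N_\ell) \;=\; \frac{N\beta^k}{2k}\,\mathcal{E}_{G,U,N,\beta}\!\left[\prod_{x\in V}\frac{U(n_x(\omega)+n_x(\ell))}{U(n_x(\omega))}\right].
\end{equation*}
Since $|V|\,\rho_{G,U,N,\beta}(k)=\sum_{\ell:|\ell|=k}\mathcal{E}_{G,U,N,\beta}(N_\ell)$, both inequalities of~\eqref{eq:rhobound} reduce to estimating this expectation as $\ell$ varies.

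For the \emph{lower bound}, the bipartiteness of $G$ and the evenness of $k$ allow us to use, for each edge $\{v,w\}\in E$, the back-and-forth loop $\ell_{v,w}$ that traverses $\{v,w\}$ exactly $k/2$ times and is supported on only two vertices. The Mecke identity then becomes
\begin{equation*}
\mathcal{E}_{G,U,N,\beta}(N_{\ell_{v,w}})\;=\;\frac{N\beta^k}{2k}\,\mathcal{E}_{G,U,N,\beta}\!\left[\frac{U(n_v+k/2)\,U(n_w+k/2)}{U(n_v)\,U(n_w)}\right].
\end{equation*}
Restricting the expectation to the event $A_{v,w}:=\{n_v(\omega),\,n_w(\omega)\le B\}$ for some large enough $B=B(\beta,U,\Delta,N,k)$ bounds the integrand below by the strictly positive constant $\min_{0\le n\le B}U(n+k/2)/U(n)$, using that $U>0$ on $\mathbb{N}_0$. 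A Markov inequality combined with a uniform-in-$|V|$ bound on $\mathcal{E}_{G,U,N,\beta}(n_v)$ (which for each fixed $\beta$ follows from the good condition on $U$) then gives $\mathcal{P}_{G,U,N,\beta}(A_{v,w})\ge c>0$; summing over the $\Theta(|V|)$ edges of $G$ yields the desired $\rho(k)\ge c_2$.

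The \emph{upper bound} is more delicate because the prefactor $\beta^k$ is unbounded. Applying the good inequality $U(n+j)/U(n)\le\prod_{i=1}^{j}M/(n+i)$ to the Mecke integrand yields
\begin{equation*}
\mathcal{E}_{G,U,N,\beta}(N_\ell)\;\le\;\frac{NM^k\beta^k}{2k}\,\mathcal{E}_{G,U,N,\beta}\!\left[\prod_{x\in V}\prod_{i=1}^{n_x(\ell)}\frac{1}{n_x(\omega)+i}\right],
\end{equation*}
and summing over the $\le|V|\Delta^k$ rooted oriented loops $\ell$ of length $k$ should be read as a weighted random-walk expectation in which each step $u\to v$ of the test loop carries a factor of $\beta M/(n_v(\omega)+1)$. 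The \textbf{main obstacle} is to prove that this expectation remains bounded uniformly in $\beta$. Heuristically, the good condition forces $n_v$ to have a Poisson-like distribution with mean of order $\beta M\Delta$, so that each factor $\beta M/(n_v+1)$ is $O(1)$ and the total bound is at most $C(k,\Delta,N,M)$; but the one-sided good inequality does not directly produce a lower bound on $n_v$. A likely route is to iterate the Mecke identity $k$ times, inserting the $k$ steps of $\ell$ one by one and exploiting a telescoping cancellation in which each inserted step produces an exact factor $\beta\,U(n_x+j+1)/U(n_x+j)$ that, after the good bound, absorbs precisely one $\beta$ from the prefactor. The random path model framework recalled in the notation section, where each $n_x$ is encoded by link counts on adjacent edges and the pairing combinatorics can be treated edge by edge, should be the natural setting to carry out this computation.
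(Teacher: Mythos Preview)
Your lower bound is essentially complete and follows the same line as the paper: a Mecke/insertion identity, restriction to the event $\{n_v,n_w\le B\}$, a uniform-in-$|V|$ moment bound on the local time (which is indeed proved in the paper as Lemma~\ref{lemma:local-time-beta} via exactly the ``good'' condition), and a sum over edges using back-and-forth loops. The paper phrases this in the RPM framework rather than directly via Mecke, but the content is the same.

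The upper bound, however, has a genuine gap, and your own diagnosis is accurate: from the Mecke identity and the $M$-good inequality you obtain a factor $\beta^k$ times $\mathcal{E}\big[\prod_x\prod_i (n_x+i)^{-1}\big]$, and there is no way to extract a $\beta$-independent bound from this without a \emph{lower} bound on the local times, which the one-sided good condition does not provide. Your heuristic ``$n_v$ is Poisson-like with mean $\sim\beta$'' is exactly the kind of information that is unavailable, and the suggested ``iterate Mecke $k$ times with telescoping'' is not a proof: iterating Mecke produces factorial moments of $N_\ell$, not a cancellation of the $\beta^k$.

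The paper's mechanism for $\beta$-independence is different and does not use the good condition at all (note $c_1$ does not depend on $U$). The key step (Lemma~\ref{lemma:colourswitch}) is a bijective decomposition: remove one loop of class $\gamma$ of length $2k$ and \emph{replace} it by $k$ loops of length two on the even steps of $\gamma$. This leaves the local times unchanged, so the $U$-factors cancel exactly, and the $\beta^{2k}$ from the removed loop is cancelled by the $\beta^{2}$ from each of the $k$ inserted short loops. What remains is a factorial moment of the variables $k_e$ (number of length-two loops on $e$), with a prefactor $(2/N)^{k-1}$ and no $\beta$. The second key idea is that in the RPM, conditionally on the link numbers and on the pairings at one side of the bipartition, the number of length-two loops at a vertex is the number of fixed points of an Ewens$(\,N/2\,)$ permutation; this gives a stochastic domination by i.i.d.\ Poisson-type variables depending only on $N$ (Lemmas~\ref{lemma:ewens} and~\ref{lemma:upperstochdomination}), and hence a bound uniform in $\beta$ and in $U$. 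Neither of these two ideas---the exact length-two replacement that kills $\beta^k$, and the Ewens/fixed-point control of double links---appears in your proposal, and they are what make the upper bound work.
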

 Our theorem has two  main implications.  The lower bound implies that, even in the regime of large values of $\beta$,   in which macroscopic loops occur \cite{QuitmannTaggi} 
(i.e., loops whose length is proportional to $|V|$),
 there remains  a density  (depending on $\beta$) of microscopic loops (i.e.,  loops whose length 
is independent from $|V|$) of any length.
In other words, microscopic and macroscopic loops \textit{coexist}. 
On the other hand, the main interest of our upper bound is in the fact that  it holds uniformly in $\beta$. 
It implies that,  even though increasing $\beta$ to infinity has the effect of increasing to infinity 	the local time at each site uniformly in the size of the system
	(this follows from  \cite[Lemma~5.6]{QuitmannTaggi}),
	the density of microscopic loops whose length is less than any finite   $k$
	 is  bounded from above uniformly in $\beta$ and in the size of the system. 
	In other words,  the increase of the local time as $\beta$ gets larger is only due
	to an increase in the number of the longer loops.

	\vspace{0.2cm}
	
\textbf{Discussion. }
The techniques that we use to prove our Theorem \ref{theo:coexistence} can be applied in wider generality.  For example,  they can be adapted to the  random walk loop soup representing the Bose gas, even in the presence of long range interactions
(we consider a simpler setting since it is more general and also for clarity reasons).
It is of great interest introducing new methods for the analysis of such models.
Indeed, {except for the special case of the hard-core Bose gas under the so-called half-filling condition \cite{DysonLiebSimon, KennedyLiebShastry},} no proof of occurrence of a phase transition in the Bose gas in dimension $d > 2$ is known.  Also  for the random walk loop soup (\ref{eq:measure})
and for its special case,  the Spin O(N) model with $N>2$, 
 the  only known proof of occurrence of a phase transition when $d>2$ uses reflection positivity,
which is a quite limited method.  For example it only works on the torus of $\mathbb{Z}^d$
with even side length and for certain classes of interactions.
It is of great interest providing a  more general proof of occurrence of a phase transition.
The derivation of the (uniform in $\beta$) upper bound  (\ref{eq:rhobound}) can be considered a progress towards this direction. 
For concreteness,  let  $G_L = (\Lambda_L,E_L)$ be the graph with vertex set $\Lambda_L:=\big\{(x_1,\dots,x_d) \in \Z^d \, : \, x_i \in (-\frac{L}{2},\frac{L}{2} ] \,  \, \,  \forall i \in [d]\big\}$ and edges connecting nearest-neighbour vertices and assume that $d>2$.
For any fast decaying $U:\N_0 \to \R^+$ and $N,\beta \in \R^+$ consider the average local time of the microscopic loops
\begin{equation}\label{eq:localtimemicro}
T_{U,N}(\beta) :=  \sum_{k=2}^{\infty}  k \limsup_{L  \rightarrow \infty}  \,  \rho_{G_{2L},U,N,\beta}(k)\,.
\end{equation}
This quantity corresponds to the average number of times sites are visited by some microscopic loop (note that the limit  is taken after the sum, this ensures that only the contribution of the microscopic loops is taken into account).
One expects the  local time  of the microscopic loops to increase with $\beta$ up to reaching the critical threshold $\beta_c$. 
Any further increase of $\beta$ beyond such a threshold is expected to lead to an increase of the  local time of the macroscopic loops but not of the microscopic loops. 
Since we know from \cite{QuitmannTaggi} 
that the total (i.e., of the microscopic \textit{and} non-microscopic loops)
local time  increases to infinity
with $\beta$,  providing an upper bound for  (\ref{eq:localtimemicro})
which is finite and  uniform in $\beta$  would imply the existence of 
non-microscopic loops,  and, thus,  the occurrence of a phase transition.

Our Theorem \ref{theo:coexistence} provides an upper bound which is uniform in $\beta$ for each term in the sum in (\ref{eq:localtimemicro}), 
and we consider this  an interesting progress.
Unfortunately there is still an important gap to fill to show that \textit{the whole sum} 
(and not only any single term which appears in the sum) is  bounded from above  by a finite constant uniformly in $\beta$, thus obtaining a new general proof of occurrence of a phase transition.
{
Indeed, 	the constant $c_1$ in Theorem~\ref{theo:coexistence} is far from optimal and is exponentially growing in $k$ (see equation (\ref{eq:proofupperbound}) below for a quantitative estimate.).
}

We now present our third and last main result. 
 It is well known from the Mermin-Wagner theorem 
 that the spin-spin correlation in the Spin O(N) model,
$\langle  S_x^1 S_y^1 \rangle$, decays at least polynomially fast with
$|y-x|$ on $\mathbb{Z}^2$,
for each integer $N \geq 2$
(see e.g. \cite[Section 9.4]{FriedliVelenik}).
However,   (\ref{eq:spinconnection})
does not allow us to deduce from this result that 
also the probability that $x$ is connected to $y$ by some loop
in the random walk loop soup exhibits the same decay. 
Our last theorem fills this gap and,  in turn, it implies that no macroscopic loop
exists in dimension two. 
\begin{theorem}
\label{theo:nomacroscopicloops}
Let $d=2$ and recall the definition of $G_L$. 
Then, for any $\beta \geq 0$  and integer $N \geq 2$   there exists $c(\beta, N) \in (0, \infty)$ such that for any $x,y \in \Z^2$, for any $L$ large enough,
$$
  \mathcal{P}_{G_L, U^s_N, N, \beta} (x \leftrightarrow y) \leq d(x,y)^{-c}\,.
$$
\end{theorem}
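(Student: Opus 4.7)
The plan is to bound the loop-connection probability by a power of the Spin O(N) two-point function $\langle S_x^1 S_y^1\rangle_{G_L, N, \beta}$, and then invoke the Mermin--Wagner polynomial decay of the latter on $\Z^2$ for integer $N\geq 2$. By a union bound, $\mathcal{P}_{G_L, U^s_N, N, \beta}(x\leftrightarrow y)\leq \mathcal{E}_{G_L, U^s_N, N, \beta}[N_{x,y}]$, where $N_{x,y}$ counts the loops of the soup that visit both $x$ and $y$. A Palm-type identity obtained by singling out one loop from a configuration drawn from \eqref{eq:measure} yields
$$\mathcal{E}_{G_L, U^s_N, N, \beta}[N_{x,y}] \;=\; \frac{N}{2}\sum_{\substack{\ell\in\mathcal{L}\\ x,y\in\ell}} \frac{\beta^{|\ell|}}{|\ell|}\,\frac{\mathcal{Z}^{(\ell)}_{G_L, U^s_N, N,\beta}}{\mathcal{Z}_{G_L, U^s_N, N,\beta}}\,,$$
where $\mathcal{Z}^{(\ell)}$ is the partition function with the local time $n_v(\ell)$ added at every vertex $v\in V$.

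I would then cut each such loop $\ell$ at its root, chosen at $x$, and at the first visit to $y$, decomposing it into an ordered pair $(\gamma_1,\gamma_2)\in\mathcal{L}(x,y)\times\mathcal{L}(y,x)$ whose concatenation is $\ell$. The submultiplicativity $U^s_N(n+m)\leq U^s_N(n) U^s_N(m)$, which follows from the log-convexity of $\Gamma$ applied to \eqref{eq:SpinWeightfunction}, allows one first to separate the loop's local time from that of the background soup (so $\mathcal{Z}^{(\ell)}/\mathcal{Z}\leq \prod_{v\in V} U^s_N(n_v(\ell))$) and then to split it between $\gamma_1$ and $\gamma_2$, producing an upper bound of the form
$$\mathcal{E}_{G_L, U^s_N, N, \beta}[N_{x,y}] \;\leq\; C(N)\cdot A(x,y)\cdot A(y,x),$$
with $A(x,y):=\sum_{\gamma\in\mathcal{L}(x,y)}\beta^{|\gamma|}\prod_{v\in V} U^s_N(n_v(\gamma))$ and analogously for $A(y,x)$.

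The main obstacle is to bound $A(x,y)$ by a power of the Spin two-point function $\langle S_x^1 S_y^1\rangle = \mathcal{Z}_{G_L, U^s_N, N, \beta}(x,y)/\mathcal{Z}_{G_L, U^s_N, N, \beta}$, since a naive application of submultiplicativity only produces $\langle S_x^1 S_y^1\rangle \leq A(x,y)$, the wrong direction. To close this gap I would work inside the random path model of \cite{QuitmannTaggi}, where turning a loop through $x,y$ into an open path from $x$ to $y$ is a purely local operation: it amounts to unpairing one pair of links at $x$ and one at $y$, at an explicit multiplicative cost captured by ratios of the form $U^s_N(n)/U^s_N(n-1)=1/(2n+N-2)$. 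Combining this local cost with the uniform-in-$\beta$ upper bound on the density of microscopic loops from Theorem~\ref{theo:coexistence}, together with elementary tail estimates on the local time at $x$ and $y$, one expects to produce $\mathcal{E}_{G_L, U^s_N, N, \beta}[N_{x,y}]\leq C(\beta,N)\,\langle S_x^1 S_y^1\rangle^{\alpha}$ for some $\alpha>0$.

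Once this inequality is in hand, the classical Mermin--Wagner theorem for the Spin O(N) model with integer $N\geq 2$ on $\Z^2$ provides the polynomial bound $\langle S_x^1 S_y^1\rangle_{G_L, N, \beta}\leq C'\,d(x,y)^{-c_{\mathrm{MW}}(\beta,N)}$ uniformly in $L\geq 2\, d(x,y)$; raising this to the power $\alpha$ gives $\mathcal{P}_{G_L, U^s_N, N, \beta}(x\leftrightarrow y)\leq d(x,y)^{-c}$ with $c:=\alpha\,c_{\mathrm{MW}}(\beta,N)>0$, as required.
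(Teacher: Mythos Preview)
Your overall strategy---relate the connection probability to a spin correlation and then invoke polynomial decay on~$\Z^2$---is the right one, but the execution has a genuine gap precisely at what you call ``the main obstacle''. Once you have applied submultiplicativity of~$U^s_N$ twice to reach the inequality
\[
\mathcal{E}_{G_L}[N_{x,y}]\ \le\ C(N)\,A(x,y)\,A(y,x),
\qquad
A(x,y)=\sum_{\gamma\in\mathcal{L}(x,y)}\beta^{|\gamma|}\prod_{v} U^s_N(n_v(\gamma)),
\]
you have thrown away the normalisation by the full partition function, and~$A(x,y)$ is simply the generating function of a weakly self-avoiding walk. There is no reason for this to decay with~$d(x,y)$ when~$\beta$ is large; in fact for~$\beta$ above the connective threshold~$\tilde\beta_c(U^s_N)$ it grows. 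The proposed rescue via a local unpairing operation in the RPM and Theorem~\ref{theo:coexistence} is not a workable argument: unpairing a loop at~$x$ and~$y$ produces a configuration with \emph{two} open strands (hence relates to a four-point, not a two-point, spin correlation), and the density bound of Theorem~\ref{theo:coexistence} concerns microscopic loops of fixed length and says nothing about the long loops that dominate~$\{x\leftrightarrow y\}$. In short, the detour through~$A(x,y)$ is a dead end, and the sketch does not bridge it.

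The paper avoids this trap by working with the \emph{four-point} correlation $\langle S_x^1 S_y^1 S_x^2 S_y^2\rangle$ rather than the two-point one. An identity from~\cite{QuitmannTaggi} gives directly
\[
\langle S_x^1 S_y^1 S_x^2 S_y^2\rangle
\ =\ \frac{1}{2N}\,\mathcal{E}\Big(\sum_{j=1}^{|\omega|} n_x(\ell_j)\,n_y(\ell_j)\,\frac{1}{\tilde n_x(\omega)\,\tilde n_y(\omega)}\Big),
\qquad \tilde n_z:=n_z+\tfrac{N}{2},
\]
which already contains the connection event (the sum is~$\ge \mathbbm{1}_{\{x\leftrightarrow y\}}$) together with a harmless factor~$1/(\tilde n_x\tilde n_y)$. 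Iterated Cauchy--Schwarz plus the uniform moment bounds on the local time (Lemma~\ref{lemma:local-time-beta}) then yield
\[
\mathcal{P}(x\leftrightarrow y)^{\,1+2^{-(m-1)}}\ \le\ C(\beta,N,m)\,\langle S_x^1 S_y^1 S_x^2 S_y^2\rangle
\]
for every~$m\in\mathbb N$, and the polynomial decay of the four-point function on~$\Z^2$ follows from a minor adaptation of the McBryan--Spencer complex-translation argument (not Mermin--Wagner, which by itself gives only absence of magnetisation). So the missing idea is: do \emph{not} decouple the walk from the background via submultiplicativity; instead use the four-point identity, which keeps the normalised expectation intact and turns the problem into a moment estimate on~$n_x,n_y$.
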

It may well be that the same result can be deduced from the
loop representations of the Spin O(N) model which 
have recently been used in \cite{AizenmanPeled,  Lis}, 
and results on the absence of macroscopic loops
in two dimensions for an analogous loop representation of the  Spin O(N) model have also appeared in \cite{Be-U}.
The techniques and results of these papers, however, are limited to the case $N=2$.

\section{Proof of Theorem \ref{theo:enhancements}}
\label{sect:proofenhancements}
In this section we present the proof of Theorem \ref{theo:enhancements}. More precisely, we  prove a more general {result}, Theorem \ref{theo:enhancements-new} below, from which Theorem \ref{theo:enhancements} follows as a special case.  Recall the definition of  \textit{nice} weight function $U$ right before Theorem~\ref{theo:enhancements}. Notice also that, if $U(0)=a>1$, we can consider an equivalent model by considering the weight function $\bar U(\cdot)=\bar U(\cdot)/a$, which is again nice and moreover satisfies $\bar U(0)=1$. Therefore, in what follows, without loss of generality we will assume that a nice weight function satisfies $U(0)=1$.
\begin{example} We now give further examples of  nice weight functions.
	\begin{itemize}
		\item The weight function $U_N^s$ of the Spin $O(N)$ model, defined in \eqref{eq:SpinWeightfunction}, for every integer $N\ge 2$.
		\item The \emph{factorial} weight function, defined as	\begin{equation}\label{eq:def-bar-U-s}
			{U_{\rm F}}(n)=\frac{1}{n!}\, .
		\end{equation}
		\item The \emph{pairwise weight function} of intensity $\alpha\in[0,\infty)$, $U_\alpha:\N_0\to (0,1]$, defined as
		\begin{equation}\label{eq:def-pairwise-potential}
			U_\alpha(n)=e^{-\alpha\binom{n}{2}}\ ,\qquad n\in\N_0\, ,
		\end{equation}
	for some $\alpha>0$.
	\end{itemize}
\end{example}

The next theorem states that the critical inverse temperature of the random walk loop soup with weight function $U$ is (weakly) bounded from below by the inverse connectivity constant of the weakly self-avoiding walk with the same weight function.
\begin{theorem}
	\label{theo:enhancements-new}
	Let $d\geq 1$ and $G=(V,E)$ be $\Z^d$. Fix $N>0$ and suppose that $U$ is nice. 
	Then, 
	\begin{equation}\label{eq:def-tilde-beta}
		\beta_c(U,N)\ge \tilde{\beta}_c(U):=\sup\left\{\beta\ge 0\:\:{\rm s.t. }\:\limsup_{k\to\infty}\frac{1}{k}\log\chi_{ U}(k)< -\log(2d\beta)\right\}\, ,
	\end{equation}
where, for $k\in\N$,
\begin{equation}\label{eq:def-chi}
	\chi_{ U}(k):= \mathbf{E}_{o}\left[ \prod_{x \in V} U(n^{(k)}_x(X))\right]\, ,
\end{equation}
and $\mathbf{E}_o$ denotes the expectation with respect to the law of a simple symmetric random walk $X = (X_t)_{t\ge 0}$ in $\Z^d$ starting at the origin, that is to say,
for every function~$f$ which depends only on the first~$k$ steps of the walk,
$$
E_o(f)
\ =\ 
\frac 1{(2d)^k}
\sum_{\substack{X=(x_0,\,x_1,\,\ldots,\,x_k)\in(\Z^d)^{k+1}\ :\\x_0=0\text{ and }|x_i-x_{i-1}|=1\ \forall i\in[k]}}
f(X)
\,,
$$
and $n_x^{(k)}(X)$ is the number of visits of the walk~$X$ at site~$x$ before time $k$, not counting the initial step, 
$$n_x^{(k)}(X)
:=
\big|\big\{j\in[k]\,:\,X_j=x\big\}\big|\,.
$$	
\end{theorem}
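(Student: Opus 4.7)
The goal is to establish exponential decay of the connection probability for every $\beta<\tilde\beta_c(U)$, with rate uniform in $K$. I start from Markov's inequality
\[
\mathcal{P}_{G_K,U,N,\beta}(x\leftrightarrow y)\le \mathcal{E}_{G_K,U,N,\beta}\Big[\sum_{i=1}^{|\omega|}\mathbbm{1}_{\{\ell_i\ni x,y\}}\Big],
\]
and peel off a distinguished loop: by symmetry of~\eqref{eq:measure} in the loop ordering, the right-hand side equals the same expression with only $i=1$ marked and a prefactor $n$, which combines with $1/n!$ to give $1/(n-1)!$. The interaction factor in~\eqref{eq:measure} is $\prod_v U\big(n_v(\ell_1)+n_v(\omega')\big)$ with $\omega'=(\ell_2,\ldots,\ell_n)$, and submultiplicativity of $U$ bounds it above by $\prod_v U(n_v(\ell_1))\prod_v U(n_v(\omega'))$. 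The remaining sum over $\omega'$ reproduces exactly $\mathcal{Z}_{G_K,U,N,\beta}$, which cancels against the normalizer, yielding
\[
\mathcal{P}_{G_K,U,N,\beta}(x\leftrightarrow y)\le \frac{N}{2}\sum_{\ell\ni x,y}\frac{\beta^{|\ell|}}{|\ell|}\prod_v U(n_v(\ell)).
\]

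\textbf{Reduction to walks.} Next I re-root every loop at $x$: since $\beta^{|\ell|}\prod_v U(n_v(\ell))$ depends only on the cyclic equivalence class of $\ell$ and any loop visiting $x$ has at least one rooted representative starting at $x$ (in fact $n_x(\ell)\geq 1$ such representatives up to the period), the factor $1/|\ell|$ is absorbed to give
\[
\sum_{\ell\ni x,y}\frac{\beta^{|\ell|}}{|\ell|}\prod_v U(n_v(\ell))\le \sum_{\substack{\gamma:\,x\to x\\\gamma\ni y}}\beta^{|\gamma|}\prod_v U(n_v(\gamma)),
\]
where the right-hand sum is over closed walks from $x$ to $x$ that visit $y$. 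I then split each such closed walk at its first visit to $y$ into a piece $\gamma_1:x\to y$ of length $s\ge d(x,y)$ and a piece $\gamma_2:y\to x$ of length $t\ge d(x,y)$. Submultiplicativity of $U$ once more yields $\prod_v U(n_v(\gamma_1\circ\gamma_2))\le \prod_v U(n_v(\gamma_1))\prod_v U(n_v(\gamma_2))$. Dropping the endpoint constraint on each half and using $G_K\subseteq \Z^d$ gives $\sum_{\gamma(0)=z,\,|\gamma|=k}\prod_v U(n_v(\gamma))\le (2d)^k\chi_U(k)$, whence summing over the split I obtain
\[
\mathcal{P}_{G_K,U,N,\beta}(x\leftrightarrow y)\le \frac{N}{2}\Big(\sum_{s\ge d(x,y)}(2d\beta)^s\chi_U(s)\Big)^{2}.
\]

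\textbf{Conclusion and main obstacle.} For $\beta<\tilde\beta_c(U)$, definition~\eqref{eq:def-tilde-beta} implies $\limsup_s\big((2d\beta)^s\chi_U(s)\big)^{1/s}<1$, so the tail sum starting at $d(x,y)$ decays exponentially in $d(x,y)$, proving uniform exponential decay and hence $\beta_c(U,N)\ge \tilde\beta_c(U)$. The most delicate step is the re-rooting: one has to account for cyclic equivalence classes of loops carefully so that the factor $1/|\ell|$ is correctly absorbed by the choice of root at a visit to $x$. The submultiplicativity of $U$, used twice, is the ingredient that enables both the partition-function cancellation after peeling and the separation of the bound into two independent walk sums; without it neither step goes through.
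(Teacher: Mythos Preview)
Your proof is correct and follows the same skeleton as the paper's: bound the connection probability by the expected number of loops containing both $x$ and $y$, peel off the distinguished loop using the symmetry of~\eqref{eq:measure}, apply submultiplicativity of~$U$ to decouple the peeled loop from the rest so that the remaining sum reproduces~$\mathcal{Z}_{G_K,U,N,\beta}$, and then re-root the surviving loop at~$x$ to absorb the factor~$1/|\ell|$.

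The only divergence from the paper is in the last estimation. After re-rooting, the paper simply uses that any closed walk from~$x$ visiting~$y$ has length at least~$r:=d(x,y)$, and bounds directly
\[
\sum_{\substack{\gamma:\,x\to x\\ \gamma\ni y}}\beta^{|\gamma|}\prod_v U(n_v(\gamma))
\ \le\ \sum_{\gamma\in\mathcal{L}(x,x)}\beta^{|\gamma|}\mathbbm{1}_{|\gamma|\ge r}\prod_v U(n_v(\gamma))
\ \le\ \sum_{k\ge r}(2d\beta)^k\chi_U(k)\,,
\]
dropping only the return constraint~$X_k=x$. Your route---splitting at the first visit to~$y$, invoking submultiplicativity a second time, and bounding by~$\big(\sum_{s\ge r}(2d\beta)^s\chi_U(s)\big)^2$---is valid and even yields a doubled exponential rate, but the extra decomposition and second appeal to submultiplicativity are unnecessary: the single tail sum already gives the exponential decay needed for~$\beta<\tilde\beta_c(U)$. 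In short, the paper's endgame is shorter; yours is a correct but slightly heavier variant of the same argument.
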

%

	Before entering into the details, it is worth highlighting that the proof of Theorem \ref{theo:enhancements-new} relies on a simple idea: exploiting the submultiplicativity of the function $U$, the contribution of distinct loops can be factorized so to obtain an effective upper bound on the probability of having a loop joining two distinct vertices.

We will later show how Theorem \ref{theo:enhancements} follows from Theorem \ref{theo:enhancements-new}.
\begin{proof}[Proof of Theorem \ref{theo:enhancements-new}]
Let $K \subset \mathbb{Z}^d$ be finite and let $G_K=(K,E_K)$ be the subgraph of $\mathbb{Z}^d$ with edges having both end-points in $K$, let $o,y \in K$ and set $r:=d(o,\,y)$.
Using the symmetry among the labels of the loops we have that, for any $\beta \in \R^+$,

	\begin{align}
		& \Pcal_{G_K,U,N,\beta}({o}\leftrightarrow y)
		\nonumber \le \sum_{n\ge 1}n\ \Pcal_{G_K,U,N,\beta}\big(|\omega|=n ,\ o\in\ell_1,\ |\ell_1|\ge r\big) \\
		\nonumber	&=\frac{1}{\Zcal_{G_K,U,N,\beta}}\sum_{n\ge 1}\frac{1}{(n-1)!}\left(\frac{N}{2} \right)^n\sum_{x_1,\dots,x_n\in {K}}\sum_{\substack{\ell_i\in\mathcal{L}(x_i,x_i)\\i=1,\dots,n}}\prod_{i=1}^n\frac{\beta^{|\ell_i|}}{|\ell_i|}\prod_{x \in K}U(n_x(\ell_1,\dots,\ell_n))\mathbbm{1}_{o\in\ell_1}\mathbbm{1}_{|\ell_1|\ge r}\\
		\nonumber	&=\frac{N}{2} {\sum_{z\in K}\sum_{\ell\in\mathcal{L}(z,z)}}\frac{\beta^{|\ell|}}{|\ell|}\mathbbm{1}_{o\in\ell}\mathbbm{1}_{|\ell|\ge r} \\
		\nonumber & \qquad \times \frac{1}{\Zcal_{G_K,U,N,\beta}}\sum_{n\ge 1}\frac{1}{(n-1)!}\left(\frac{N}{2} \right)^{n-1}\sum_{x_2,\dots,x_n\in {K}}\sum_{\substack{\ell_i\in\mathcal{L}(x_i,x_i)\\i=2,\dots,n}}\prod_{i=2}^n\frac{\beta^{|\ell_i|}}{|\ell_i|}\prod_{x \in K}U(n_x(\ell,\ell_2,\dots,\ell_n))\,.
	\end{align}
Using that $U$ is sub-multiplicative and exploiting the symmetry of with respect to root of $\ell$, we obtain
\begin{align*}
 \Pcal_{G_K,U,N,\beta}({o}\leftrightarrow y)		&\le\frac{N}{2} {\sum_{z\in K}\sum_{\ell\in\mathcal{L}(z,z)}}\frac{\beta^{|\ell|}}{|\ell|}\mathbbm{1}_{o\in\ell}\mathbbm{1}_{|\ell|\ge r}\prod_{x \in K} U(n_x(\ell))\\
		&\le\frac{N}{2} \sum_{\ell\in\mathcal{L}(o,o)}\beta^{|\ell|}\mathbbm{1}_{|\ell|\ge r}\prod_{x \in K} U(n_x(\ell))\\
	& \le  \frac{N}{2}\sum_{k\ge r}(2d\beta)^k\mathbf{E}_{o}\left[ \prod_{x \in V} U(n_x^{(k)}(X))\mathbbm{1}_{X_k=o}\right]\\
		\nonumber	&\le\frac{N}{2}\sum_{k\ge r}(2d\beta)^k\chi_{ U}(k)\, .
	\end{align*}

 Let now $\beta<\tilde{\beta}_c( U)$. The definition of $\tilde{\beta}_c( U)$ implies that there exist $c_1,c_2\in(0,\infty)$ such that
	$$(2d\beta)^k\chi_{ U}(k)\le c_2\, e^{-c_1 k},\qquad \forall k\in\N\, . $$
	Therefore, for some $c_3>0$
	\begin{equation}
		\mathcal{P}_{G_K,U,N,\beta}(o\leftrightarrow y)\le c_3\: N\:e^{-c_1 d(o,\,y)}\,,\qquad o,y\in V\, .
	\end{equation}
Hence, $\beta_c(U,N)> \beta$. Since $\beta<\tilde{\beta}_c(U)$ was chosen arbitrarily, this concludes the proof of the theorem.
\end{proof}

We are now ready to present the proof of Theorem \ref{theo:enhancements}. Essentially, given the result in Theorem \ref{theo:enhancements-new}, we only need to provide a positive lower bound for the difference between the inverse connectivity constant of the self-avoiding walk with weight $U$, $\tilde{\beta}_c(U)$, and that of the simple random walk, i.e., $\tfrac1{2d}$.

	\begin{proof}[Proof of Theorem \ref{theo:enhancements}]
		Let $G$, $N$, $U$ as in the statement of Theorem \ref{theo:enhancements-new}. By Theorem \ref{theo:enhancements-new} it {suffices} to show that $\tilde{\beta}_c( U)>\frac{1}{2d}$. From the definitions of $\tilde{\beta}_c( U)$ and of $\chi_{U}$ in \eqref{eq:def-tilde-beta} and \eqref{eq:def-chi}, it follows  that this is equivalent to showing that
		\begin{equation}\label{eq:limit-free-energy-bar}
			\limsup_{k\to\infty}\frac{1}{k}\log \mathbf{E}_o\left[\prod_{x \in V} {U}(n_x^{(k)}(X)) \right]<0\, .
		\end{equation}
		
		By the fact that $ U$ is fast decaying, it follows that for any $\alpha>0$ there exists some $\bar n=\bar{n}(\alpha)\in \N$  such that
		\begin{equation}\label{eq:cond-bar-n}
			U(n)\le e^{-\alpha}\ ,\qquad \forall n\ge \bar n\ .
		\end{equation}
		Moreover, by the fact that $U$ is sub-multiplicative,
		splitting the walk $(X_1,\dots,X_k)$ into smaller walks of length $2\bar n$ {(assuming that~$k$ is a multiple of~$2\bar n$),} we conclude that, a.s.,
		\begin{equation}\label{eq:split}
			\prod_{x \in V}  U(n_x^{(k)}(X))=\prod_{x \in V}  U\left(\sum_{i=1}^{{k/(2\bar n)} }n_x^{(2\bar n)}(X^{(i)})\right)\le \prod_{x \in V}\prod_{i=1}^{{k/(2\bar n)} }  U(n_x^{(2\bar n)}(X^{(i)}))\, .
		\end{equation}
		Therefore, it suffices to notice that the events
		\begin{equation}\label{eq:event-A}
			\cA_{i}=\{\exists x\in V\text{ s.t. }n_x^{(2\bar n)}(X^{(i)})=\bar n \}
		\end{equation}
		are such that the family of Bernoulli random variables $(\mathbbm{1}_{\cA_i})_i$ is i.i.d.\ of parameter $\mathbf{P}_o(\mathcal{A}_1)$. Moreover,
		\begin{equation}
			\mathbf{P}_o(\mathcal{A}_1){\ge\mathbf{P}_o\big(n_o^{(2\bar n)}(X^{(1)}))=\bar n\big)}=(2d)^{-\bar n}\ ,
		\end{equation}
		indeed, the walk has ${2\bar n}$ steps available and has to visit $o$ exactly ${\bar n}$ times, and this can only be done by coming back to $o$ at {every} even step.
		Therefore, by {a} Chernoff bound, {calling}
		\begin{equation}\label{eq:def-R}
			R_k=\bigg|\bigg\{i\le{\frac{k}{2\bar n}}\ :\  \mathbbm{1}_{\cA_i}=1 \bigg\}\bigg|\ ,
		\end{equation}
		as soon as $\varepsilon<p$ we have
		\begin{equation}\label{eq:exp-bound-small-R}
			\mathbf{P}_o\left(R_k \le \varepsilon k \right)\le\exp\left(-\frac{\big(1-\frac{\varepsilon}{p}\big)^2p}{2}\times k\right)\ ,\qquad \forall k\in\N\ ,
		\end{equation}
		where
		$$p=\frac{1}{2\bar n(2d)^{\bar n}}\ . $$
		Moreover, 
		using \eqref{eq:split} and  the fact that $U$ is nice we also have, a.s.,
		\begin{equation}\label{eq:as-ineq}
			\prod_{x \in V}  U(n_x^{(k)}(X))\le \prod_{x \in V}\prod_{i=1}^{{k/(2\bar n)}}  U(n_x^{(2\bar n)}(X^{(i)}))\le \prod_{i=1}^{{k/(2\bar n)}}\prod_{\substack{x\in V\\n_x^{(2\bar n)}(X^{(i)})=\bar n}}  U(n_x^{(2\bar n)}(X^{(i)}))\ .
		\end{equation}
		By \eqref{eq:exp-bound-small-R} and \eqref{eq:as-ineq} we conclude
		\begin{align}
			\mathbf{E}_o\left[\prod_{x \in V}  U(n_x^{(k)}(X)) \right]\le \exp\left(-\frac{\big(1-\frac{\varepsilon}{p}\big)^2p}{2}\times k\right)+e^{-\alpha\frac\varepsilon2 k}\ .
		\end{align}
		At this point, we notice that to prove \eqref{eq:limit-free-energy-bar} it is sufficient to choose  $\varepsilon>0$ which satisfies
		\begin{equation}\label{eq:eps-cond}
			\varepsilon<p\,,\qquad\min\left\{\frac{\big(1-\frac{\varepsilon}{p}\big)^2p}{2}\,,\,\alpha\frac{\varepsilon}{2}\right\}>\log(2d\beta)\,.
		\end{equation}
		Notice that the only value of $\varepsilon<p$ for which the two terms in the minimum in \eqref{eq:eps-cond} coincide is
		\begin{equation}
			\varepsilon=\frac{1}{2} \left(\alpha-\sqrt{\alpha(\alpha+4)}+2\right)p\,,
		\end{equation}
		since $\frac{1}{2} (\alpha-\sqrt{\alpha(\alpha+4)}+2)\in(0,1)$ for all $\alpha>0$.
		In conclusion, for any  choice of $\alpha>0$, $\bar{n}=\bar{n}(\alpha)$ satisfying \eqref{eq:cond-bar-n} and $\varepsilon(\alpha,\bar n)$ as in \eqref{eq:eps-cond}, we have
		\begin{equation}\label{eq:final-quant-est}
			\tilde{\beta}_c(U)\ge \frac1{2d}\exp\left(\frac{\alpha-\sqrt{\alpha(\alpha+4)}+2}{4\bar n (2d)^{\bar n}}\right)\ .
		\end{equation}
	\end{proof}

	\begin{remark}\label{rmk:thm1.1-quant}
		For a given choice of the nice weight function $U$ the estimate in \eqref{eq:final-quant-est} can be optimized over $\alpha$.  Nevertheless, the estimate is not intended to be sharp. Indeed, both the choice of the events in \eqref{eq:event-A} is conceived only to the scope of a general proof, embracing all the possible nice weight functions. Notice further that, for a specific choice of $U$ a quantitative estimate in the spirit of \eqref{eq:final-quant-est} might be interesting in itself since -- as pointed out right before the proof -- it coincides with a quantitative estimate on the connectivity constant of a weakly self-avoiding walk.
	\end{remark}

\section{Random path model}
\label{sect:randompathmodel}
In this section we introduce the Random Path Model (RPM), which is a random loop model that differs from the Random Walk Loop Soup (RWLS) defined in the introduction. The RPM plays an important role in the {proof of Theorem \ref{theo:coexistence}}. In \cite{QuitmannTaggi} it was shown that the RPM and RWLS are \textit{equivalent}. We will first define the RPM and then recall the equivalence relation between the two models.
\subsection{Definition of the RPM} \label{sec:RPMuncoloured}
Let $G=(V,E)$ be a finite undirected 
graph, and let $N >0$. A realisation of the random path model can be viewed as a collection of 
unoriented paths which might be closed or open.
To define a realisation we need to introduce \textit{links} and 
\textit{pairings}. 
We represent a \textit{link configuration} by
$m \in  \mathcal{M} :=  (\mathbb{N}_0)^E$.
More specifically
$$m = \big ( m_e \big )_{e \in E}\,,$$
where $m_{e}\in\N_0$ represents the number of links on the edge $e$.
Intuitively, a link represents a `visit' at the edge by a path.
The links are ordered  by receiving  a label between $1$ and $m_e$. We denote by $(e,p)$ the $p$-th link at $e \in E$ with $p \in [m_e]$. If a link is on the edge $e = \{x,y\}$, then we  say that \textit{it touches} $x$ and $y$. 

Given a link configuration $m \in \mathcal{M}$, a pairing $\pi_x$ for $m$ at $x \in V$ pairs links touching $x$ in such a way that any link touching $x$ can be paired to at most one other link
	touching $x$, and it is not
	necessarily the case that all links touching $x$ are paired to another link at $x$.
	More formally, $\pi_x$ is a partition of the links touching $x$ into sets of at most two links. If a link touching $x$ is paired at $x$ to no other link touching $x$, then we say that the link is \textit{unpaired at $x$}. We denote by $\mathcal{P}_{x}(m)$ the set of all such pairings for $m$ at $x$ and by $\mathcal{P}(m)$ we denote the set of vectors $\pi=(\pi_x)_{x \in V}$ such that $\pi_x \in \mathcal{P}_{x}(m)$ for all $x \in V$. 

A \textit{configuration}  of the random path model is an element $w = ( m,\pi)$
such that $m \in \mathcal{M}$,
and 
$\pi \in \mathcal{P}(m)$. 
We let $\mathcal{W}$ be the set of  such configurations.
Any configuration $w \in \mathcal{W}$ can be seen as a collection of (open or closed) paths (also called \textit{cycles}), which are unrooted and unoriented. For a formal definition of a path see Section \ref{sect:equivalence}. We let $\alpha(w)$ denote the number of paths in a configuration $w \in \Wcal$. 

With slight abuse of notation, we will also view, $m,\pi,\pi_x: \Wcal \to \N_0$ as functions such that for any $w^\prime=(m^\prime, \pi^\prime) \in \Wcal$, $m(w^\prime)=m^\prime$, $\pi(w^\prime)=\pi^\prime$ and $\pi_x(w^\prime)=\pi_x^\prime$.
For any $x \in V$, let $u_x: \mathcal{W} \rightarrow  \mathbb{N}_0$ be the function corresponding to the number of links touching $x$ which are not paired to any other link touching $x$.
Let further $n_x : \mathcal{W} \rightarrow \mathbb{N}_0$ be the function corresponding to the number of pairings at $x$.

We let 
\begin{equation} \label{eq:randompathtilde}
{\widetilde{\Wcal}}:=\{w \in \Wcal: u_x=0 \ \forall x \in V\}
\end{equation}
be the set of configurations in which there exist no unpaired links.

We now introduce a (non-normalised) measure  on $\mathcal{W}$ and a probability measure on \smash{${\widetilde{\Wcal}}$}.
\begin{definition}\label{def:measure}
	Let $N, \beta \in \R^+$, $U:\N_0 \to \R_0^+$ be given.  We introduce the (non-normalised) non-negative measure $\mu_{G,U,N,\beta}$ on $\mathcal{W}$,
	\begin{equation}\label{eq:RPMmeasure}
		\forall w = (m, \pi) \in \mathcal{W}
		\quad \quad 
		\mu_{G,U,N,\beta}(w) := N^{\alpha(w)} \,
		\bigg(\prod_{e \in {E}} \frac{ \beta^{m_e(w)}}{m_e(w)!} \bigg ) \bigg( \prod_{x \in {V}} U(n_x(w))
		\bigg).
	\end{equation}
	
	Given a function $f : \mathcal{W} \rightarrow \mathbb{C}$, we represent its average by 
	$
	\mu_{G,U,N,\beta} \big ( f \big ) :=
	\sum\limits_{w \in \mathcal{W}}\mu_{G,U,N,\beta} (w) f(w).
	$
	
	We define the measure $\mu^\ell_{G,U,N,\beta}$ as the restriction of the measure $\mu_{G,U,N,\beta}$ to the set of configurations \smash{${\widetilde{\Wcal}}$} and define a probability measure on \smash{${\widetilde{\Wcal}}$} by
	\begin{equation}
		\label{eq:RPMprobabilitymeasure}
		\forall w =(m,\pi) \in {\widetilde{\Wcal}}
		\quad \quad 
		\P_{G,U,N,\beta}(w) := \frac{\mu^\ell_{G,U,N,\beta}(w)}{\Z_{G,U,N,\beta}^\ell},
	\end{equation}
	where \smash{$\Z_{G,U,N,\beta}^\ell := \mu_{G,U,N,\beta}({\widetilde{\Wcal}})$}  is the partition function.
	We denote by $\E_{G,U,N,\beta}$ the expectation under the measure $\P_{G,U,N,\beta}$. Sometimes, for a lighter notation, we will omit the sub-scripts. 
\end{definition}

\subsection{Equivalence} \label{sect:equivalence}
In \cite[Section 3]{QuitmannTaggi} it is shown that the RWLS and the RPM are equivalent if one considers the expectation of functions which do not depend on certain features of the configurations. In \cite{QuitmannTaggi} the RPM was introduced slightly differently, namely, each path in a configuration has a colour from $1$ to $N$, and the factor $N^{\# \text{ loops}}$ does not appear in the definition of the measure. The equivalence between the RPM and RWLS was thus shown for any $N \in \N$ only, but with the definition of the RPM in Section \ref{sec:RPMuncoloured}, the result of \cite[Section 3]{QuitmannTaggi} can easily be extended to any $N>0$. 

In this section we will present a special case of \cite[Theorem 3.14]{QuitmannTaggi} that we will apply in the proof of Theorem~\ref{theo:coexistence} and that further reflects the similarity between the two models. Informally said, we will show that the expected number of loops of any given shape is identical in both models. 
We will first introduce an equivalence relation on $\Lcal$ and then define {so-called} multiplicity numbers. 

Throughout this section, we fix an arbitrary undirected, finite graph $G=(V,E)$, parameters $N, \beta \in \R^+$ and a {fast decaying weight} function $U:\N_0 \to \R_0^+$.

\paragraph{Equivalence classes of rooted oriented loops in the RWLS.}
We now introduce an equivalence relation on $\Lcal$. 
We call two r-o-loops $\ell,\ell^\prime \in \Lcal$ \textit{equivalent} if one sequence can be obtained {as} a time-reversion,  a cyclic permutation or a combined time-reversion and cyclic permutation of the other sequence, see \cite[Section~3]{QuitmannTaggi} for a precise definition.
We denote the equivalence class of $\ell\in \Lcal$ by $\gamma(\ell)$ and by $\Sigma(\Lcal)$ we denote the set of equivalence classes of r-o-loops.

\paragraph{Multiplicity numbers in the RWLS.} For any $\gamma \in \Sigma(\Lcal)$ we introduce the \textit{multiplicity number} $k_\gamma: \Omega \to \N_0$, 
which for any $\omega=(\ell_1, \ldots, \ell_{|\omega|}) \in \Omega$ counts the number of r-o-loops in $\omega$ that are an element of $\gamma$, i.e., 
\begin{equation} \label{eq:multiplicityRWLS}
	k_{\gamma}(\omega) := \Big | \big \{   i \in \big\{1, \ldots, |\omega| \big\} \, : \, \ell_i \in \gamma  \big \}  \Big |\,.
\end{equation}
For~$e=\{x,y\}\in E$ and $\gamma=\gamma((x,y,x)) \in \Sigma(\Lcal)$, we also write $k_e=k_\gamma$ for the number of loops of length two on the edge~$e$.

\paragraph{Multiplicity numbers in the RPM.}
Given $m \in {\Mcal}$, a \textit{rooted oriented linked loop} (in short: \textit{r-o-l-loop}) for $m$ is an ordered sequence of nearest-neighbour vertices and pairwise distinct links,
$$L=(x_0,(\{x_0,x_1\},p_1),x_1,(\{x_1,x_2\},p_2),\dots,(\{x_{k-1},x_{k}\},p_{k}),x_{k}\big)\,,
$$ where $p_j \in \{1,\dots,m_{\{x_{j-1},x_j\}}\}$ for each $j \in [k]$ and such that $x_0=x_{k}$ for some ${k\ge 2}$. 
 The vertex $x_0$ is called the \textit{root} of $L$, the link $(\{x_0,x_1\},p_1)$ is called the \textit{starting link} of $L$ 
and the length of $L$ is defined by $|L|:=k$. 
We let $\mathcal{R}_m$ be the set of all r-o-l-loops for $m$. Two r-o-l-loops are said to be \textit{equivalent} if one sequence can be obtained from the other sequence as a time-inversion, or as a cyclic permutation, or a combination of the two; see again \cite[Section 3]{QuitmannTaggi}.
We denote by $\chi(L) \subset \mathcal{R}_m$ the equivalence class of $L \in \mathcal{R}_m$ and we denote by $\Sigma(\Rcal_m)$ the set of 
equivalence classes of r-o-l-loops. The equivalence class of a r-o-l-loop will simply be referred to as \textit{cycle}, which can be thought of as an unoriented loop with no root. 

A set of cycles for $m$, {$A\subset\Sigma(\Rcal_m)$} is called an \textit{ensemble of cycles} for $m$ if every link is contained in precisely one cycle of the set, i.e., if for every $(e,p)$ with {$e\in E$ and~$p\in[m_e]$}, there exists precisely one {$\chi\in A$} such that $(e,p) \in {\chi}$. We denote by $\Ecal_m$ the set of ensembles of cycles for $m$.

For any \smash{$w=(m,\pi) \in {\widetilde{\Wcal}}$}, we can uniquely construct an ensemble of cycles 
\begin{equation} \label{eq:ensemblecycles}
	\zeta(w):=\{\chi_1(w),\dots,\chi_{k(w)}(w)\} \in \Ecal_m
\end{equation}
as follows: We take any link $(\{x,y\},p_{\{x,y\}})$ and choose a point $z_0 \in \{x,y\}$. {Step by step}, we construct a r-o-l-loop $L_1$ for $m$ with root $z_0$ and with starting link $(\{x,y\},p_{\{x,y\}})$ by choosing $z_1 \in \{x,y\} \setminus \{z_0\}$ as the next vertex and by choosing the link which is paired at $z_1$ to $(\{x,y\},p_{\{x,y\}})$ as the next link in the sequence. We continue until we are back at the link $(\{x,y\},p_{\{x,y\}})$. We define the cycle $\chi_1(w) \in \Sigma(\Rcal_m)$ as the equivalence class of $L_1$. For the next cycle, we choose a link that has not been selected yet and proceed as before. We continue until all links have been selected. 

For $m \in {\Mcal}$, we introduce the map 
\begin{equation}\label{eq:def-vartheta}
	\vartheta: \Rcal_m \to\Lcal\,,
\end{equation}
which acts by projecting a r-o-l-loop $L=(x_0,(\{x_0,x_1\},p_1),x_1,\dots,(\{x_{k-1},x_{k}\},p_{k}),x_{k}\big) \in \Rcal_m$ onto the corresponding r-o-loop $\vartheta(L):=(x_0,x_1,\dots,x_k) \in {\Lcal}$ by \lq forgetting\rq{} about the links in the sequence. It is important to note that {any two r-o-l-loops~$L,L^\prime \in \Rcal_m$ are equivalent if and only if the r-o-loops $\vartheta(L), \vartheta(L^\prime) \in {\Lcal}$ are equivalent}. By slight abuse of notation, we thus also use the function $\vartheta$ to map an equivalence class $\chi \in \Sigma(\Rcal_m)$ to its corresponding equivalence class $\vartheta(\chi) \in \Sigma({\Lcal})$.

	For any $\gamma \in \Sigma({\Lcal})$ we introduce the \textit{multiplicity number} \smash{$\tilde{k}_\gamma: {\widetilde{\Wcal}} \to \N_0$}, 
	which for any \smash{$w \in {\widetilde{\Wcal}}$} counts the number of  times a cycle $\chi \in \zeta(w)$ projects onto $\gamma$, i.e., 
	\begin{equation}\label{eq:definitionmultiplicitynumberedge2}
		\tilde{k}_\gamma(w) := \Big | \big \{   {\chi\in\zeta(w)} \, : \, \vartheta({\chi})= \gamma  \big \}  \Big |\,.
	\end{equation}
	For~$e=\{x,y\}\in E$ and $\gamma=\gamma((x,y,x)) \in \Sigma(\Lcal)$, we also write $\tilde{k}_e=\tilde{k}_\gamma$.

	A special case of \cite[Theorem 3.14]{QuitmannTaggi} is the following lemma, which states that the expected multiplicity numbers are identical in the RWLS and RPM. {We recall from \eqref{eq:measure} and \eqref{eq:RPMmeasure} that in \eqref{eq:expectationequiv1} and \eqref{eq:expectationequiv2} below the expectation in the right and left hand-side refer to the RPM and RWLS, respectively.} 
	
	\begin{lemma} \label{lemma:equivalence}
		For any $A \subset E$ and any functions $f_e:\N_0 \to \R$, $e \in A$, 
		\begin{equation} \label{eq:expectationequiv1}
			\Ecal_{G,U,N,\beta}\bigg(\prod_{e \in A} f_e(k_e)\bigg)=\E_{G,U,N,\beta}\bigg(\prod_{e \in A} f_e(\tilde{k}_e)\bigg)\,.
		\end{equation}
		Further, for any $\gamma \in \Sigma(\Lcal)$,
		\begin{equation} \label{eq:expectationequiv2}
			\Ecal_{G,U,N,\beta}(k_\gamma)= \E_{G,U,N,\beta}(\tilde{k}_\gamma)\,.
		\end{equation}
	\end{lemma}

\section{Proof of Theorem \ref{theo:coexistence}}
\label{sect:coexistence}
In this section we prove Theorem \ref{theo:coexistence}. We present the proofs of the upper and lower {bounds} separately in Sections \ref{sect:upperbound} and \ref{sect:lowerbound}, respectively.

\subsection{Uniform upper bound on the density of any fixed microscopic loop} \label{sect:upperbound}
Throughout this section, we fix an arbitrary finite undirected bipartite graph $G=(V,E)$ with $V=V_1\sqcup V_2$ and each edge in $E$ connects a point of~$V_1$ with a point of~$V_2$. Further, we fix parameters $N,\beta \in \R^+$ and a fast decaying weight function $U:\N_0 \to {\R_0^+}$. The proof of the upper bound in Theorem~\ref{theo:coexistence} will follow from Proposition \ref{prop:coexistence} below. Before stating the proposition, we need to introduce some functions acting on $\Sigma(\Lcal)$, where we recall from Section \ref{sect:equivalence} that $\Sigma(\Lcal)$ denotes the set of {equivalence} classes of $\Lcal$.

Given two r-o-loops $\ell,\ell^\prime \in \Lcal$ such that $\ell(0)=\ell^\prime(0)$, we define their concatenation  as
$\ell \oplus \ell^\prime:=\big(\ell(0),\dots,\ell(|\ell|),$ $\ell^\prime(1),\dots, \ell^\prime(|\ell^\prime|)\big).$ 
We define the \textit{multiplicity}  of $\ell$, $J(\ell)$, as the maximal integer $n \in \N$ such that $\ell$ can be written as the $n$-fold concatenation of some r-o-loop, $\tilde{\ell}$,  with itself. 
{We also define the \textit{time-reversal} of~$\ell$ as~$r(\ell) := \big(\ell(|\ell|),\,\ell(|\ell|-1),\,\ldots,\,\ell(0)\big)$.}
We call a r-o-loop $\ell$ \textit{stretched} if there exists a cyclic permutation of $\ell$ that is identical to {its time reversal}~$r(\ell)$.
Otherwise the r-o-loop is called \textit{unstretched}, see also \cite[Figure~3]{QuitmannTaggi}.
For any $\ell \in {\Lcal}$, we define the \textit{stretch-factor} $\delta(\ell)$ by
$$
\delta(\ell):=\begin{cases} 1  & \text{if } \ell \text{ is stretched,} \\
	2 & \text{if } \ell \text{ is unstretched.} \end{cases}
$$
%
%

For any $x\in V$ and $\ell \in \Lcal$, we denote by $n_x(\ell)$ the number of visits at~$x$ by $\ell$, namely,
$$
n_x(\ell):=\sum_{i=0}^{|\ell|-1} \mathbbm{1}_{\{\ell(i)=x\}}\,.
$$
We define the support of $\ell$ by
\begin{equation}\label{eq:def-supp}
	\mathrm{supp}\,(\ell)
	\ :=\ 
	\big\{x\in V\ :\ n_x(\ell)>0\big\}
	\,.
\end{equation}
Note that, for any pair of equivalent r-o-loops $\ell,\ell^\prime \in \Lcal$, {we have} $J(\ell)=J(\ell^\prime)$, $\delta(\ell)=\delta(\ell^\prime)$, $n_x(\ell)=n_x(\ell^\prime)$ and $\mathrm{supp}\,(\ell)= \mathrm{supp}\,(\ell^\prime)$.  Thus, by slight abuse of notation, we also use the notations $J(\gamma), \delta(\gamma), n_x(\gamma)$ and $\mathrm{supp}(\gamma)$ for  equivalence classes $\gamma \in \Sigma(\Lcal)$. Further, we denote by $\alpha(\gamma)$ the length of any r-o-loop in $\gamma$ and by $|\gamma|$ we denote the cardinality of the class $\gamma$.

For any $a \in \N$ and $\gamma \in \Sigma(\Lcal)$, we further define
\begin{equation}
	\mu_a(\gamma)
	\ :=\ 
	\mathcal{E}_{G, U,  N, \beta}
	\Big[\,
	k_\gamma(k_\gamma-1)\ldots(k_\gamma-a+1)
	\,\Big]\,,
\end{equation}
where we recall from \eqref{eq:multiplicityRWLS} that $k_\gamma(\omega)$ counts the number of loops in $\omega \in \Omega$ that are an element of the equivalence class $\gamma$. 
We now have all ingredients to state Proposition \ref{prop:coexistence}.

\begin{proposition}
	\label{prop:coexistence}
	For any $a \in \N$ and $\gamma\in\Sigma(\mathcal{L})$, 
	\begin{equation}
		\label{eq:moment_bound}
		\mu_a(\gamma)
		\ \leq\ 
		\lambda(\gamma)^a, 
	\end{equation}
	where
	$$
	\lambda(\gamma)
	\ :=\ 
	\frac{\delta(\gamma)}{J(\gamma)}
	\times\frac{N}{2}
	\times \max\bigg\{e^{\frac{N}{2}},\, \frac{2e}{N} \bigg\}^{\frac{\alpha(\gamma)}{2}}\,.
	$$
\end{proposition}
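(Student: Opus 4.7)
My first step is to convert the factorial moment $\mu_a(\gamma)$ into a ratio of partition functions. Expanding $k_\gamma(k_\gamma-1)\cdots(k_\gamma-a+1)$ as a sum over ordered $a$-tuples of distinct loop indices belonging to $\gamma$ and then invoking the permutation symmetry of the Boltzmann weight \eqref{eq:measure} under relabelling of loops, one may fix the $a$ distinguished loops to be $\ell_1,\ldots,\ell_a$, at the cost of a factor $n(n-1)\cdots(n-a+1)$ in the sector $|\omega|=n$ (which together with the existing $1/n!$ becomes $1/(n-a)!$). Since every $\ell\in\gamma$ shares the same length $\alpha(\gamma)$ and the same visit profile $(n_x(\gamma))_{x\in V}$, and using $|\gamma|=\delta(\gamma)\alpha(\gamma)/J(\gamma)$, this leads to the identity
\[
\mu_a(\gamma)\;=\;\Bigl(\tfrac{\delta(\gamma)}{J(\gamma)}\cdot\tfrac{N}{2}\cdot\beta^{\alpha(\gamma)}\Bigr)^{\!a}\,\frac{\mathcal{Z}_a(\gamma)}{\mathcal{Z}_{G,U,N,\beta}},
\]
where $\mathcal{Z}_a(\gamma)$ is the partition function obtained from $\mathcal{Z}_{G,U,N,\beta}$ by replacing $U(n_x(\omega))$ with $U(n_x(\omega)+a\,n_x(\gamma))$ at each vertex.

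The proposition thus reduces to the uniform-in-$\beta$ estimate $\beta^{a\alpha(\gamma)}\mathcal{Z}_a(\gamma)/\mathcal{Z}_{G,U,N,\beta}\le M_\gamma^{a\alpha(\gamma)/2}$ with $M_\gamma:=\max\{e^{N/2},\,2e/N\}$. For this I would pass to the RPM representation of Section~\ref{sect:randompathmodel}: the left-hand side then has a transparent interpretation as the relative weight in the RPM of configurations carrying $a$ extra ``ghost'' cycles of shape $\gamma$, each contributing $n_x(\gamma)$ additional pairings at vertex~$x$ and a link on each edge it traverses. The bipartiteness of $G$ now enters essentially, since it forces $\alpha(\gamma)$ to be even and gives $\sum_{x\in V_1}n_x(\gamma)=\sum_{x\in V_2}n_x(\gamma)=\alpha(\gamma)/2$, so that the target exponent $\alpha(\gamma)/2$ can be distributed across just one side $V_1$ of the bipartition, one factor of $M_\gamma^{n_x(\gamma)}$ per visit of $\gamma$ at $x\in V_1$.

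The core of the argument is then an inductive per-vertex extremal bound. Iterating $k\mapsto k+1$ for $k=0,\dots,a-1$, the one-step ratio $\beta^{\alpha(\gamma)}\mathcal{Z}_{k+1}(\gamma)/\mathcal{Z}_k(\gamma)$ factorises in the RPM into a product of vertex-local contributions. Using the $M$-goodness of $U$ (which yields $U(n+j)/U(n)\le M^j/[(n+1)(n+2)\cdots(n+j)]$) together with the combinatorics of pairings at each vertex, one shows that the vertex-local factor at $x\in V_1$ is bounded by $M_\gamma^{n_x(\gamma)}$; the two competing terms in the maximum defining $M_\gamma$ reflect two distinct regimes of the optimisation (for small $N$ the pairing combinatorics dominates, yielding $2e/N$, while for large $N$ the decay of $U$ dominates, yielding $e^{N/2}$). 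Multiplying these vertex contributions over $V_1$ recovers the target exponent $a\alpha(\gamma)/2$. The main technical obstacle is this extremal step: the supremum over the local time $n$ of the vertex contribution is attained at some $n^{*}$ growing proportionally to $\beta$, and one must show via a Stirling-type analysis that the resulting extremum is the clean $\beta$-independent constant $M_\gamma^{n_x(\gamma)}$; this is where the uniformity of the bound in $\beta$ ultimately originates.
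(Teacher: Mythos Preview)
Your opening identity
\[
\mu_a(\gamma)\;=\;\Bigl(\tfrac{\delta(\gamma)}{J(\gamma)}\cdot\tfrac{N}{2}\cdot\beta^{\alpha(\gamma)}\Bigr)^{a}\,\frac{\mathcal{Z}_a(\gamma)}{\mathcal{Z}_{G,U,N,\beta}}
\]
is correct, but from this point on the proposal contains a genuine gap. The target bound $\lambda(\gamma)$ depends \emph{only} on $N$ and on the shape of $\gamma$; it involves neither $\beta$ nor the goodness constant $M$ of $U$. Your proposed mechanism, however, routes everything through the inequality $U(n+j)/U(n)\le M^j/[(n+1)\cdots(n+j)]$, which injects an explicit $M^{a\alpha(\gamma)}$ into the estimate. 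You then assert that a ``Stirling-type analysis'' of the resulting supremum over the local time yields the clean constant $\max\{e^{N/2},2e/N\}$; but there is no reason for the $M$-dependence to disappear at that stage, since the only remaining ingredient---the law of $n_x$ under $\mathbb{P}_{G,U,N,\beta}$---itself depends on $U$ in a way that does not simply invert the $M$-goodness bound. The claimed vertex-local factorisation of $\beta^{\alpha(\gamma)}\mathcal{Z}_{k+1}(\gamma)/\mathcal{Z}_k(\gamma)$ is also not justified: in the RPM the loop-count factor $N^{\alpha(w)}$ and the edge factorials couple the vertices, so one does not get a product over $x\in V_1$ without further work.

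The paper avoids all of this with two ideas you are missing. First (Lemma~\ref{lemma:colourswitch}), instead of \emph{removing} the $a$ copies of $\gamma$, one \emph{replaces} each of them by $\alpha(\gamma)/2$ loops of length two, one on every even step. This map preserves the local time at every vertex, so the $U$-factors cancel \emph{exactly} and the identity becomes
\[
\mu_a(\gamma)=\psi(\gamma)^a\,\mathcal{E}\!\left[\prod_{e\in E}\prod_{0\le i<a\,q_e(\gamma)}(k_e-i)\right],
\qquad \psi(\gamma)=\frac{\delta(\gamma)}{J(\gamma)}\Bigl(\frac{2}{N}\Bigr)^{\alpha(\gamma)/2-1},
\]
with no $\beta$ and no $U$ left in the prefactor. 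Second (Lemmas~\ref{lemma:ewens}--\ref{lemma:upperstochdomination}), the factorial moments of the $k_e$'s are controlled by observing that, in the RPM conditioned on $m$ and on the pairings at $V_2$, the double links around each $x\in V_1$ are distributed as the fixed points of an Ewens$(N/2)$ permutation of size $n_x$; this yields a stochastic domination of $\sum_{y\sim x}\tilde k_{\{x,y\}}$ by a Poisson-type variable whose law depends only on $N$. It is this Ewens structure---not any property of $U$---that produces the $\max\{e^{N/2},2e/N\}$ and the uniformity in $\beta$.
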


We remark that \eqref{eq:moment_bound} implies that $k_\gamma$ has Poisson-like tails. More precisely, applying Markov's inequality, we deduce from \eqref{eq:moment_bound} that for any~$a\in\mathbb{N}$ with~$a\geq\lambda(\gamma)$,
$$
\Pcal_{G, U,  N, \beta}\big(k_\gamma \geq a \big) 
= \Pcal_{G, U,  N, \beta}\big(k_\gamma(k_\gamma-1)\cdots (k_\gamma-a+1) \geq a!\big) 
\leq \frac{\mu_a(\gamma)}{a!} \leq \frac{\lambda(\gamma)^a}{a!}.
$$

We now explain how the upper bound of Theorem~\ref{theo:coexistence} follows from Proposition~\ref{prop:coexistence}.

\begin{proof}[Proof of the upper bound in Theorem~\ref{theo:coexistence}]
Let $k \in \N$ and $\Delta$ denote the maximal degree of $G$.
 Applying Proposition \ref{prop:coexistence}  with $a=1$ and bounding $\delta\le 2$ and $J\ge 1$, we deduce that,
\begin{equation} \label{eq:proofupperbound}
	\rho(2k)
	\ =\ 
	\frac{1}{|V|} 
	\sum_{\substack{\gamma\in\Sigma(\mathcal{L}):\\  \alpha(\gamma)=2k}}
	\mathcal{E}_{G, U,  N, \beta}
	\big[k_\gamma\big]
	\ \leq\ 
{N}
	\Bigg[
	\,
	\Delta^2 \, \max\bigg\{e^{\frac{N}{2}}, \frac{2e}{N} \bigg\}
	\,\Bigg]^k
	\,,
\end{equation}
where we used that $|\{\gamma \in \Sigma(\Lcal): \, \alpha(\gamma)=2k\}| \leq |V| \, \Delta^{2k}$ for any $k \in \N$. 
Since \eqref{eq:proofupperbound} holds {uniformly} with respect to~$\beta$ and $|V|$, this finishes the proof of the upper bound in Theorem \ref{theo:coexistence}.
\end{proof}

The remainder of the current section is devoted to the proof of Proposition \ref{prop:coexistence}. 
The proof consists of two main steps. In Section \ref{suse:decomposition} we  first derive a formula for $\mu_a(\gamma)$ in terms of loops of length two, which reduces the problem to the study of the distribution of such short loops. The formula is written in Lemma \ref{lemma:colourswitch} below. In Section \ref{suse:ewens} we then derive a control on the distribution of  \textit{double links}, i.e., loops of length two, in the RPM. As a preliminary step, in Lemma \ref{lemma:ewens}, we show a conditional independence property of the RPM which provides a connection with Ewens Permutations. The stochastic upper bound on the density of double links is then presented in Lemma \ref{lemma:upperstochdomination} and follows from an elementary uniform upper bound on the number of fixed points in a large random permutation. In Section \ref{suse:conclusion-ub}, we apply Lemma \ref{lemma:colourswitch} and Lemma \ref{lemma:upperstochdomination}, as well as the equivalence between the RPM and RWLS (see Lemma \ref{lemma:equivalence}) to conclude the proof of Proposition \ref{prop:coexistence}.

\subsubsection{Decomposition of loops into loops of length two}\label{suse:decomposition}

In this section we will decompose any loop $\ell \in \Lcal$ into loops of length two. For this purpose we introduce the function $q_e(\ell)$, $\ell \in \Lcal$, $e\in E$, which counts the number of even steps of the loop $\ell$ on the edge $e$, namely,
$$
q_e(\ell)
\ :=\ 
\big|
\big\{
j\in\{0,\,\ldots,\,|\ell|-1\}\ :\ 
\{\ell(2j),\,\ell(2j+1)\}=e
\big\}\big|\,.
$$
For any $\gamma\in\Sigma(\mathcal{L})$, we choose an arbitrary loop~$\ell\in\gamma$ and we define~$q_e(\gamma)=q_e(\ell)$ for any $e\in E$ (the result depends on the choice of the loop, which amounts to choosing which steps are odd and which steps are even in~$\gamma$).
\begin{lemma}
	\label{lemma:colourswitch}
For any~$k\in\mathbb{N}$, any $\gamma\in\Sigma(\mathcal{L})$ of length~$\alpha(\gamma)=2k$ and for any~$a\in\mathbb{N}$, we have that,
\begin{equation} \label{eq:loopdecomp}
	\mu_a(\gamma)
	\ =\ 
	\psi(\gamma)^a
	\times
	\mathcal{E}_{G, U,  N, \beta}
	\left[\,
	\prod_{e\in E}
	\,
	\prod_{0\,\leq\,i\,<\,a\times q_e(\gamma)}
	(k_e-i)
	\,\right],
	\qquad
	\text{where}
	\quad
	\psi(\gamma)
	\ =\ 
	\frac{\delta(\gamma)}{J(\gamma)}\left(\frac{2}{N}\right)^{k-1}
	\,,
\end{equation}
	with the natural convention that the second product is~$1$ when~$q_e(\gamma)=0$.
\end{lemma}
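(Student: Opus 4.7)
The plan is to expand both sides of \eqref{eq:loopdecomp} using the definition~\eqref{eq:measure} of the RWLS, reorganize each by a symmetrization over loop labels, and observe that the two resulting expressions differ only by an explicit combinatorial prefactor that simplifies to $\psi(\gamma)^a$. Throughout I will use the fact that the equivalence class of an r-o-loop has cardinality $|\gamma|=\delta(\gamma)\,\alpha(\gamma)/J(\gamma)$, which combines the $\alpha(\gamma)/J(\gamma)$ distinct cyclic rotations of a representative with the possible doubling coming from time reversal.

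First I will rewrite the factorial moment as
$$\mu_a(\gamma)\ =\ \mathcal{E}_{G,U,N,\beta}\Bigg[\sum_{\substack{(i_1,\dots,i_a)\in[|\omega|]^a\\ \text{pairwise distinct}}}\prod_{j=1}^a\mathbbm{1}\{\ell_{i_j}\in\gamma\}\Bigg],$$
and use the symmetry of~\eqref{eq:measure} under permutations of loop labels to reduce to the canonical tuple $(1,\dots,a)$ at the cost of a factor $|\omega|!/(|\omega|-a)!$ which cancels the $1/|\omega|!$ prefactor. Summing over the first $a$ loops then produces $|\gamma|^a$ choices of representative, a weight $((N/2)\beta^{2k}/(2k))^a$, and, since every r-o-loop in $\gamma$ visits each $x$ exactly $n_x(\gamma)$ times, an extra shift of $a\,n_x(\gamma)$ in the argument of $U$ at every vertex. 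What remains is a shifted partition function
$$\mathcal{Z}^{(a\,n(\gamma))}\ :=\ \sum_{\omega'\in\Omega}\frac{1}{|\omega'|!}\Big(\frac{N}{2}\Big)^{|\omega'|}\prod_i\frac{\beta^{|\ell'_i|}}{|\ell'_i|}\prod_{x\in V}U\!\left(n_x(\omega')+a\,n_x(\gamma)\right),$$
so that $\mu_a(\gamma)=\mathcal{Z}^{-1}\cdot|\gamma|^a(N/2)^a\beta^{2ak}/(2k)^a\cdot\mathcal{Z}^{(a\,n(\gamma))}$.

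Second I will apply the identical manoeuvre to $\mathcal{E}_{G,U,N,\beta}\big[\prod_e(k_e)_{a\,q_e(\gamma)}\big]$, which I rewrite as the expected number of injections from the index set $T:=\{(e,j)\,:\,e\in E,\ 1\le j\le a\,q_e(\gamma)\}$ (of total size $ak$, since $\sum_e q_e(\gamma)=k$) into $[|\omega|]$, subject to the constraint that the loop filling the $(e,j)$-slot is one of the two two-loops on $e$. After the same symmetrization each slot contributes $\sum_{\text{orientation}}\beta^2/2=\beta^2$, giving an overall prefactor $(N/2)^{ak}\beta^{2ak}$, while the induced shift of the weight function at vertex $x$ is $b_x:=\sum_{e\ni x}a\,q_e(\gamma)$. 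This yields
$$\mathcal{E}_{G,U,N,\beta}\!\Big[\prod_e(k_e)_{a\,q_e(\gamma)}\Big]\ =\ \frac{(N/2)^{ak}\beta^{2ak}}{\mathcal{Z}}\,\mathcal{Z}^{(b)}.$$

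To conclude I will verify the key identity $\sum_{e\ni x}q_e(\gamma)=n_x(\gamma)$: every visit of $\gamma$ to $x$ at time $i$ belongs to exactly one even step of $\gamma$ (the one indexed by $\lfloor i/2\rfloor$), and because $G$ is simple no single step touches $x$ twice, so these counts coincide. (Bipartiteness enters only via the assumption $\alpha(\gamma)=2k$, which is what makes the even/odd partition of steps meaningful.) Consequently $b_x=a\,n_x(\gamma)$ and $\mathcal{Z}^{(b)}=\mathcal{Z}^{(a\,n(\gamma))}$, and taking the ratio of the two displays above gives, after using $|\gamma|=\delta(\gamma)\cdot 2k/J(\gamma)$,
$$\frac{\mu_a(\gamma)}{\mathcal{E}_{G,U,N,\beta}\!\left[\prod_e(k_e)_{a\,q_e(\gamma)}\right]}\ =\ \frac{(N/2)^a\,|\gamma|^a/(2k)^a}{(N/2)^{ak}}\ =\ \Big(\frac{\delta(\gamma)}{J(\gamma)}\Big)^a\Big(\frac{2}{N}\Big)^{a(k-1)}\ =\ \psi(\gamma)^a,$$
which is precisely~\eqref{eq:loopdecomp}. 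The main obstacle is the combinatorial bookkeeping: the three symmetrizations (over loop labels, over the $|\gamma|$ rooted-oriented representatives of $\gamma$, and over the two orientations of each tagged two-loop) must be juggled simultaneously without overcounting, and the shifts induced on the weight function at every vertex have to be tracked precisely — the identity $\sum_{e\ni x}q_e(\gamma)=n_x(\gamma)$ is the crucial input that makes the two vertex-shift vectors match and produces the clean cancellation.
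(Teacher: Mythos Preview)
Your proof is correct and takes a somewhat different route from the paper's. The paper constructs an explicit map $\phi:\Omega_{\gamma,a}\to\Omega$ that removes the first~$a$ loops in the class~$\gamma$ and inserts~$a\,q_e(\gamma)$ length-two loops on each edge~$e$; it then passes to equivalence classes~$\Sigma(\Omega)$, invokes the formula for~$\nu(\rho)$ from~\cite[Eq.~(3.19)]{QuitmannTaggi}, and compares~$\nu(\rho)$ with~$\nu(\Phi(\rho))$. You instead expand both sides of~\eqref{eq:loopdecomp} directly from~\eqref{eq:measure}, symmetrize over loop labels, and show that each side equals an explicit prefactor times the \emph{same} shifted partition function~$\mathcal{Z}^{(a\,n(\gamma))}$. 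The crucial step---that the local-time shifts agree---is the identity $\sum_{e\ni x}q_e(\gamma)=n_x(\gamma)$, which the paper uses implicitly (``our map~$\phi$ \ldots does not change the local time'') and which you verify explicitly. Your approach is more self-contained in that it avoids the~$\Sigma(\Omega)$ machinery and the cited formula, needing instead only the elementary count $|\gamma|=\delta(\gamma)\,\alpha(\gamma)/J(\gamma)$; the paper's bijective viewpoint, on the other hand, makes the mechanism (swap one $\gamma$-loop for~$k$ two-loops) more visually transparent. One stylistic remark: phrasing the final step as ``taking the ratio'' is slightly misleading since the denominator could vanish; but your derivations in fact give $\mu_a(\gamma)$ and the right-hand expectation as the \emph{same} multiple of~$\mathcal{Z}^{(a\,n(\gamma))}$ up to the factor~$\psi(\gamma)^a$, so the identity holds unconditionally.
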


\begin{proof}
Let {$k \in \N$ and $\gamma \in \Sigma(\Lcal)$} with $\alpha(\gamma)=2k$ and let $a \in \N$. If $k=1$, then \eqref{eq:loopdecomp} is trivial since $\delta(\gamma)=J(\gamma)=1$. Assume now that $k \geq 2$, i.e., that $\gamma$ is not a loop of length two.
	Let us consider the event
	$$\Omega_{{\gamma,}a}
	\ =\ 
	\big\{\omega\in\Omega\ :\ k_{\gamma}(\omega)\geq a\big\}\,,$$
	which is the set of configurations in which at least~$a$ occurrences of loops belonging to the equivalence class~$\gamma$ are present.
	
	On this set we introduce the map~$\phi:\Omega_{{\gamma,}a}\to\Omega$, which for any $\omega \in \Omega_{{\gamma,}a}$ acts by removing the first~$a$ loops which belong to the equivalence class~$\gamma$, by reindexing the remaining loops without changing their order, and by adding~$a\times q_e(\gamma)$ loops of length two on each edge~$e\in E$ (choosing arbitrarily the {root} of each loop), appending these loops at the end of the sequence that is obtained from $\omega$ after the removal of the $a$ loops, with an arbitrary deterministic order on~$E$, {see also Figure \ref{fig:loopdecomp}}.

\begin{figure}
\centering
\begin{subfigure}{.3\textwidth}
  \centering
  \includegraphics[width=\textwidth]{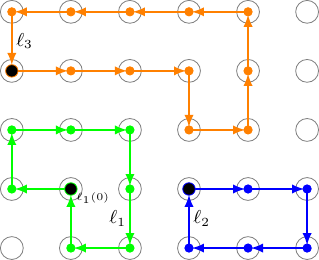}
  \caption{}
  \label{fig:sub1}
\end{subfigure}
\hspace{6em}
\begin{subfigure}{.3\textwidth}
  \centering
  \includegraphics[width=\textwidth]{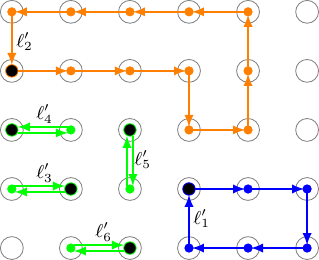}
  \caption{}
  \label{fig:sub2}
\end{subfigure}
\caption{(a) A configuration $\omega=(\ell_1,\ell_2,\ell_3) \in \Omega_{\gamma,1}$, where $\gamma:=\gamma(\ell_1) \in \Sigma(\Lcal)$. The roots of the rooted oriented loops in $\omega$ are represented by the larger filled circles. (b) The configuration $\phi(\omega)=(\ell^\prime_1,\ell^\prime_2,\ell^\prime_3,\ell^\prime_4, \ell_5^\prime, \ell_6^\prime) \in \Omega$, where we set $q_e(\gamma)=q_e(\ell_1)$ for any $e \in E$.}
\label{fig:loopdecomp}
\end{figure}

	Let us now define an equivalence relation for RWLS configurations, which is taken from \cite{QuitmannTaggi}.
	We call two configurations~$\omega,\,\omega'\in\Omega$ \textit{equivalent} if there exists~$n\in\mathbb{N}$ such that~$\omega=(\ell_1,\,\ldots,\,\ell_n)$ and~$\omega'=(\ell'_1,\,\ldots,\,\ell'_n)$ and if there exists a permutation~$\pi\in S_n$ such that~$\ell_{\pi(i)}\in\gamma(\ell'_i)$ for all~$i\in[n]$.
	We denote by~$\Sigma(\Omega)$ the set of equivalence classes of~$\Omega$, and by~$\Sigma(\Omega_{{\gamma,}a})$ the set of equivalence classes of configurations in~$\Omega_{{\gamma,}a}$.
	
	Note that, the functions~$k_{\gamma'}$ for~$\gamma'\in\Sigma(\Lcal)$, as well as the local time, are constant on equivalence classes, which allows us to define~$k_{\gamma'}(\rho)$,~$k_e(\rho)$ and~$n_x(\rho)$ for an equivalence class~$\rho\in\Sigma(\Omega)$.
For any~$\rho\in\Sigma(\Omega)$, we set
	$$\nu(\rho)
	\ =\ 
	\sum_{\omega\in\rho}
	\mathcal{P}_{G, U,  N, \beta}(\omega)\,. $$
	By \cite[Eq.~(3.19)]{QuitmannTaggi} we have that,
	\begin{equation}
		\label{eq:nu_rho}
		\nu(\rho)
		\ =\ 
		\frac{1}{\mathcal{Z}_{G,  U,  N,  \beta}}
		\prod_{\gamma'\in\Sigma(\mathcal{L})}
		\frac{1}{k_{\gamma'}(\rho)!}
		\left(\frac{N\beta^{\alpha(\gamma')}\delta(\gamma')}{2J(\gamma')}\right)^{k_{\gamma'}(\rho)}
		\prod_{x\in V}
		U\big(n_x(\rho)\big)\,,
	\end{equation}
	where the product is in fact finite.
	
	For all configurations~$\omega,\,\omega'\in\Omega_{{\gamma,}a}$, we have that~$\omega$ and~$\omega'$ are equivalent if and only if~$\phi(\omega)$ and~$\phi(\omega')$ are equivalent.
	Therefore, we may define a lifted map~$\Phi:\Sigma(\Omega_{{\gamma,}a})\to\Sigma(\Omega)$ which is such that~$\Phi(\rho(\omega))=\rho(\phi(\omega))$ for every configuration~$\omega\in\Omega_{{\gamma,}a}$, and which is injective.
	We then have
	\begin{equation}
	\label{eq:imPhi}
	\Phi\big(\Sigma(\Omega_{{\gamma,}a})\big)
	\ =\ 
	\Big\{\rho\in\Sigma(\Omega)\ :\ 
	\forall e\in E,\ 
	k_e(\rho)\,\geq\,a\times q_e(\gamma)
	\Big\}
	\,.
	\end{equation}
	
	We now wish to compare~$\nu(\rho)$ with~$\nu(\Phi(\rho))$.
	On the one hand, note that our map~$\phi$, and hence also~$\Phi$, does not change the local time at any vertex of the graph.
On the other hand, recalling our assumption that~$\gamma$ is not the class of a loop of length two,  we have that, for any $\rho\in\Sigma(\Omega_{{\gamma,}a})$ and any $\gamma^\prime \in \Sigma(\Lcal)$ with $\alpha(\gamma^\prime)>2$,
\begin{align} \label{eq:changemultiplicitynumberlongloop}
k_{\gamma^\prime}\big(\Phi(\rho)\big) & = 
\begin{cases} 
k_\gamma(\rho)-a & \text{ if } \gamma^\prime=\gamma\,, \\
k_{\gamma^\prime}(\rho) & \text{ if } \gamma^\prime \neq \gamma\,,
\end{cases} \\
\intertext{while, for any $e \in E$, } 
\label{eq:changemultiplicitynumbershortloop}
k_e\big(\Phi(\rho)\big) & = k_e(\rho) +a\times q_e(\gamma)\,.
\end{align}
From (\ref{eq:nu_rho}), \eqref{eq:changemultiplicitynumberlongloop}, \eqref{eq:changemultiplicitynumbershortloop} and using that~$\delta(\gamma')=J(\gamma')=1$ when~$\gamma'$ is the equivalence class of a loop of length two, we obtain that, for every~$\rho\in\Sigma(\Omega_{{\gamma,}a})$,
	\begin{align*}
		\nu\big(\Phi(\rho)\big)
		&\ =\ 
		\nu(\rho)
		\,
		\prod_{\gamma'\in\Sigma(\mathcal{L})}
		\frac{k_{\gamma'}(\rho)!}{k_{\gamma'}\big(\Phi(\rho)\big)!}
		\left(\frac{N\beta^{\alpha(\gamma')}\delta(\gamma')}{2J(\gamma')}\right)^{k_{\gamma'}(\Phi(\rho))-k_{\gamma'}(\rho)}
		\\
		&\ =\ 
		\nu(\rho)
		\,
		\frac{\big(k_{\gamma}(\rho)\big)!}{\big(k_{\gamma}(\rho)-a\big)!}
		\left(
		\frac{2J(\gamma)}{N\beta^{2k}\delta(\gamma)}
		\right)^a
		\prod_{e\in E}
		\frac{k_e(\rho)!}{\big(k_e(\rho)
			+a\times q_e(\gamma)\big)!}
		\left(\frac{N\beta^2}{2}\right)^{a\times q_e(\gamma)}
		\\
		&\ =\ 
		\nu(\rho)
		\,
		\frac{\big(k_{\gamma}(\rho)\big)!}{\big(k_{\gamma}(\rho)-a\big)!}
		\,\psi(\gamma)^{-a}
		\,\prod_{e\in E}
		\frac{k_e(\rho)!}{\big(k_e(\rho)
			+a\times q_e(\gamma)\big)!}
		\,,
		\numberthis\label{eq:nu_phi_rho}
	\end{align*} 
	where, in the last equality, we used the relation
	$$\sum_{e\in E}
	q_e(\gamma)
	\ =\ \frac{\alpha(\gamma)}{2}
	\ =\ k\,.  $$
Using~\eqref{eq:nu_phi_rho}, \eqref{eq:changemultiplicitynumbershortloop} and recalling that~$\Phi$ is injective, we obtain
	\begin{align*}
		\mu_a(\gamma)
		&\ =\ 
		\sum_{\rho\in\Sigma(\Omega_{{\gamma,}a})}
		\nu(\rho)
		\,
		\frac{\big(k_{\gamma}(\rho)\big)!}{\big(k_{\gamma}(\rho)-a\big)!}
		\\
		&\ =\ 
		\sum_{\rho\in\Sigma(\Omega_{{\gamma,}a})}
		\nu\big(\Phi(\rho)\big)
		\,\psi(\gamma)^a
		\,
		\prod_{e\in E}
		\frac{\big(k_e(\rho)
			+a\times q_e(\gamma)\big)!}{k_e(\rho)!}
		\\
		&\ =\ 
		\psi(\gamma)^a
		\sum_{\rho'\in\Phi(\Sigma(\Omega_{{\gamma,}a}))}
		\nu(\rho')
		\,
		\prod_{e\in E}
		\frac{k_e(\rho')!}{\big(k_e(\rho')
			-a\times q_e(\gamma)\big)!}
		\\
		&\ =\ 
		\psi(\gamma)^a
		\sum_{\rho'\in\Sigma(\Omega)}
		\nu(\rho')
		\,
		\prod_{e\in E}
	\,
	\prod_{0\,\leq\,i\,<\,a\times q_e(\gamma)}
	\big(k_e(\rho')-i\big)	
		\,,
	\end{align*}
	where, in the last step, we used the fact that the product is zero when~$\rho'\in\Sigma(\Omega)\setminus\Phi\big(\Sigma(\Omega_{{\gamma,}a})\big)$, which follows from~\eqref{eq:imPhi}.
	This concludes the proof of Lemma~\ref{lemma:colourswitch}.
\end{proof}

\subsubsection*{Decomposition of an open path to study correlation functions}
{As a side} remark, let us point out that the above technique may also be applied to study the correlation functions, by replacing one open path with loops of length two.
More precisely, we have the following result, where we recall from the beginning of this section that we fix a bipartite graph $G=(V,E)$ and denote by $V_1,V_2 \subset V$ the disjoint subsets such that $V=V_1\sqcup V_2$.

\begin{lemma}
	\label{lemma:colourswitch_bis}
	For every~$x\in V_1$ and~$y\in V_2$, the two-point function defined in~\eqref{eq:spinconnection} 
	 may be expressed as
	$$
	 \frac{\mathcal{Z}_{G, U, N, \beta}(x,y)}{\mathcal{Z}_{G, U, N, \beta}}
	\ =\ 
	\frac{1}{\beta}
	\sum_{\gamma \in\mathcal{L}(x,\,y)}
	\left(\frac{2}{N}\right)^{\frac{|\gamma|+1}{2}}
	\mathcal{E}_{G, U,  N, \beta}
	\left[\,
	\prod_{e\in E}
	\,
	\prod_{0\,\leq\,i\,<\,q_e(\gamma)}
	(k_e-i)
	\,\right]
	\,,
	$$
	where, for every path~$\gamma\in\mathcal{L}_{x,y}$ and every edge~$e\in E$,~$q_e(\gamma)$ is the number of odd steps of the path~$\gamma$ on the edge~$e$.
\end{lemma}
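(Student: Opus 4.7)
My plan is to adapt the strategy behind the proof of Lemma~\ref{lemma:colourswitch} to the case of an open path. The idea is to introduce, for each fixed $\gamma\in\mathcal{L}(x,y)$, an analogue of the map $\phi$ which, rather than removing copies of a closed loop, simply adds $q_e(\gamma)$ loops of length two on each edge $e\in E$, so as to reproduce on the local time the effect of the path $\gamma$. The crucial combinatorial fact, relying on the bipartite structure $V=V_1\sqcup V_2$ with $x\in V_1$ and $y\in V_2$, is that a walk $\gamma$ from $x$ to $y$ has odd length $|\gamma|=2k+1$, has exactly $k+1=(|\gamma|+1)/2$ odd steps, and these satisfy $\sum_{e\ni v}q_e(\gamma)=n_v(\gamma)$ for every vertex $v\in V$, where the endpoints $x$ and $y$ are counted by the convention used in~\eqref{eq:partitionwithwalk} of summing the indicator from $i=0$ up to $i=|\gamma|$ inclusive.

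Starting from~\eqref{eq:partitionwithwalk}, I would first rewrite
\begin{equation*}
\frac{\mathcal{Z}_{G,U,N,\beta}(x,y)}{\mathcal{Z}_{G,U,N,\beta}}
\ =\
\sum_{\gamma\in\mathcal{L}(x,y)}\beta^{|\gamma|}
\sum_{\rho\in\Sigma(\Omega)}\nu(\rho)\,
\prod_{v\in V}\frac{U\bigl(n_v(\rho)+n_v(\gamma)\bigr)}{U\bigl(n_v(\rho)\bigr)},
\end{equation*}
and, for each $\gamma$, introduce the injective map $\Phi_\gamma:\Sigma(\Omega)\to\Sigma(\Omega)$ that increases $k_e(\rho)$ by $q_e(\gamma)$ on every edge $e\in E$ and leaves all other multiplicities unchanged. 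Combining~\eqref{eq:nu_rho} with the local-time identity above, and using that $\delta=J=1$ for loops of length two, the ratio $\nu(\Phi_\gamma(\rho))/\nu(\rho)$ equals the $U$-ratio in the display above times the explicit factor $\prod_{e\in E}\frac{k_e(\rho)!}{(k_e(\rho)+q_e(\gamma))!}(N\beta^{2}/2)^{q_e(\gamma)}$.

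Substituting $\rho'=\Phi_\gamma(\rho)$ and noting that the resulting falling factorial $k_e(\rho')!/(k_e(\rho')-q_e(\gamma))!$ vanishes whenever $k_e(\rho')<q_e(\gamma)$ for some edge (which allows the sum over $\rho'$ to be extended from the image of $\Phi_\gamma$ to all of $\Sigma(\Omega)$), the $\gamma$-contribution becomes
\begin{equation*}
\beta^{|\gamma|}\,\bigl(2/(N\beta^{2})\bigr)^{(|\gamma|+1)/2}\,\mathcal{E}_{G,U,N,\beta}\Biggl[\,\prod_{e\in E}\,\prod_{0\le i<q_e(\gamma)}(k_e-i)\Biggr].
\end{equation*}
Collecting the $\beta$-powers as $\beta^{|\gamma|-(|\gamma|+1)}=\beta^{-1}$ and the $N$-powers as $(2/N)^{(|\gamma|+1)/2}$ then yields the announced identity. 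The main technical point is the bipartiteness-based verification that $\Phi_\gamma$ preserves the local time at every vertex, including the endpoints $x$ and $y$; once this is established, the rest of the argument is a direct transcription of the manipulations used in the proof of Lemma~\ref{lemma:colourswitch}, simplified by the fact that here $\gamma$ plays the role of a single open path rather than of $a$ copies of a closed equivalence class.
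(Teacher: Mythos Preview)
Your proposal is correct and follows precisely the route the paper indicates: adapt the proof of Lemma~\ref{lemma:colourswitch} by replacing the open path~$\gamma$ with $q_e(\gamma)$ loops of length two on each edge, using bipartiteness to guarantee $|\gamma|$ is odd so that the local-time identity $\sum_{e\ni v}q_e(\gamma)=n_v(\gamma)$ holds at every vertex (including the endpoints), and observing that no $\delta/J$ factor appears since an open path has a fixed root and orientation. The paper itself omits the details, pointing only to these two modifications, and your write-up fills them in accurately.
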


The assumption that $G$ is bipartite is intended to guarantee that all the open paths from~$x\in V_1$ to~$y\in V_2$ have odd length, which has the convenient consequence that the path may be replaced with loops of length two without changing the local time anywhere.

The other difference with Lemma~\ref{lemma:colourswitch} is that there is no multiplicity factor~$\delta(\gamma)/J(\gamma)$ due to the number of possibilities to obtain the loop, because an open path from~$x$ to~$y$ already has a well defined starting point and orientation.

We omit the detailed proof because, apart from these two details, it is very similar to the proof of Lemma~\ref{lemma:colourswitch} and in any case we do not use this result in what follows.

\subsubsection{Ewens Permutations and conditional independence on the RPM}\label{suse:ewens}
Let us denote by~$S_n$ the set of permutations of the set~$[n]=\{1,\,\ldots,\,n\}$.
For every~$\sigma\in S_n$, we denote by~$c(\sigma)$ the number of cycles in the decomposition of~$\sigma$ into disjoint cycles, counting also the cycles of length one as cycles, so that for example~$c(\mathrm{Id})=n$.
The Ewens distribution on the set~$S_n$ is the probability distribution defined by,
\begin{equation}
	\label{defEwens}
	P_{\theta,\,n}^{\text{Ewens}}(\sigma)
	\ =\ \frac{\theta^{c(\sigma)}}{Z(\theta,\,n)}\,,
\end{equation}
where~$\theta>0$ is a parameter and~$Z(\theta,\,n)$ is the normalizing constant.
Following~\cite{Ewens}, we have that
\begin{equation}
	\label{eq:zewens}
	Z(\theta,\,n)
	\ =\ 
	\theta(\theta+1)\ldots(\theta+n-1)
	\,.
\end{equation}
We write~${\rm FP}(\sigma)=\{i\in[n]\,:\,\sigma(i)=i\}$ for the set of the fixed points of the permutation~$\sigma$.

In the next lemma, we study the occurrences of loops of length two in configurations of the RPM, which was defined in Section \ref{sect:randompathmodel}.

For any $e \in E$ and any $w \in \widetilde\Wcal$, we recall from  \eqref{eq:definitionmultiplicitynumberedge2} that $\tilde{k}_e(w)$ denotes the number of double links on $e$, where a double link is defined as a pair of links on $e$ that are paired together at both its endpoints.

For any $\{x,\,y\}\in E$, $m \in \Mcal$ and $\pi_y \in \Pcal_y(m)$, we call a pair of links $(\{x,y\},p),(\{x,y\},p^\prime)$ with $p, p^\prime \in [m_{\{x,y\}}]$ a \textit{vertical pairing on}~$y$ \textit{towards}~$x$ if they are paired together at $y$, i.e., if $\big\{(\{x,y\},p),(\{x,y\},p^\prime) \big\} \in \pi_y$. We let $v_{y,x}=v_{y,x}(m,\pi_y)$ denote the number of vertical pairings on~$y$ towards~$x$.

In the following, with a slight abuse of notation, we will write $n_x=n_x(m)= \frac{1}{2} \sum_{y \sim x} m_{\{x,y\}}$ for any $m \in \Mcal$ and $x \in V$.

In the following Lemma, we condition on the numbers of links at all edges and on the pairings at all vertices~$y\in V_2$, so that what remains to sample is the pairings at every~$x\in V_1$.
The Lemma shows that, with this conditionning, the numbers of double links around two different vertices~$x,\,x'\in V_1$ are conditionnally independent, and the distribution of these numbers around every point~$x\in V_1$ is obtained using the fixed points in  an Ewens distribution.

\begin{lemma}
	\label{lemma:ewens}
	For any $(a_e)_{e\in E}\in(\mathbb{N}_0)^E$, $m \in \Mcal$ and $(\pi_y)_{y \in V_2}$ with $\pi_y \in \Pcal_y(m)$ for any $y \in V_2$, 
\begin{equation} \label{eq:lemmaewens}
	\P_{G, U,  N, \beta}
	\Big(\,
	\forall e\in E,\ \tilde{k}_e= a_e
	\ \Big|\ 
	{m},\,(\pi_y)_{y\in V_2}
	\,\Big)
	\ =\ 
	\prod_{x\in V_1}
	p\Big(
	N/2,\,n_x,\,(v_{y,x})_{y\sim x},\,\big(a_{\{x,y\}}\big)_{y\sim x}
	\Big)
	\,,
\end{equation}
	where we defined
	\begin{equation}
	\label{eq:defpEwens}
	p(\theta,\,n,\,v_1,\,\ldots,\,v_r,\,a_1,\,\ldots,\,a_r)
	\ :=\ 
	P_{\theta,\,n}^{{\rm Ewens}}
	\big(
	\forall j\leq r,\ 
	|{\rm FP}(\sigma)\cap A_j|= a_j
	\big)
	\,,
	\end{equation}
	where~$r$ is the degree of the vertex~$x$ in the graph~$G$ and the sets~$A_j$ are arbitrarily chosen pairwise disjoint subsets of~$[n]$ with~$|A_j|=v_j$ for every~$j\leq r$.
\end{lemma}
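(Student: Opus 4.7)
The plan is to fix an arbitrary $x \in V_1$ and to condition on the further data $m$, $(\pi_y)_{y \in V_2}$, \emph{and} the remaining $V_1$-pairings $(\pi_{x'})_{x' \in V_1 \setminus \{x\}}$. I will show that, under this enlarged conditioning, the law of $(\tilde{k}_e)_{e \ni x}$ equals exactly the local Ewens factor $p(N/2, n_x, (v_{y,x})_{y \sim x}, (a_{\{x,y\}})_{y \sim x})$ appearing on the right-hand side of \eqref{eq:lemmaewens}, regardless of the specific values of $(\pi_{x'})_{x' \neq x}$. Because this conditional probability depends only on local data at $x$, an iteration of the tower property over the vertices of $V_1$ then produces the product structure of~\eqref{eq:lemmaewens}: at each stage the expectation of one local indicator factors out as $p_x(a^{(x)})$ regardless of the conditioning on the other $\pi_{x'}$.

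To carry out the conditioning step, once every pairing except $\pi_x$ is fixed, the external chain of links and pairings unambiguously matches each of the $2n_x$ link-ends at $x$ to exactly one other link-end at $x$; call this perfect matching $\tau_x$. Every vertical pairing at some $y \sim x$ toward $x$ contributes a pair to $\tau_x$ (from the short external path $x \to y \to x$), so $\tau_x$ contains the $v_x := \sum_{y \sim x} v_{y,x}$ special pairs among its $n_x$ pairs. A standard loop-count gives $\alpha(w) = c(\tau_x, \pi_x) + L_0$, where $c(\tau_x, \pi_x)$ is the number of alternating cycles of the $2$-regular multigraph $\tau_x \cup \pi_x$ (equivalently the loops of $w$ through $x$) and $L_0$ does not depend on $\pi_x$. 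The conditional law of $\pi_x$ is therefore proportional to $N^{c(\tau_x,\pi_x)}$, and $\tilde{k}_{\{x,y\}}$ counts exactly those special pairs on $\{x,y\}$ that are also pairs of $\pi_x$.

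The key combinatorial step is the identity
\begin{equation*}
\sum_{\pi_x:\ \tilde{k}_{\{x,y_j\}} = a_j \, \forall j} N^{c(\tau_x,\pi_x)}
\;=\;
2^{n_x} \sum_{\sigma \in S_{n_x}} \Big(\frac{N}{2}\Big)^{c(\sigma)}
\mathbbm{1}\!\big[\, |{\rm FP}(\sigma) \cap A_j| = a_j \, \forall j \,\big]\,,
\end{equation*}
where the $n_x$ pairs of $\tau_x$ are labelled $P_1,\dots,P_{n_x}$ and $A_j \subset [n_x]$ indexes the $v_{y_j, x}$ special pairs on the edge $\{x,y_j\}$. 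I will prove it via the classical many-to-one correspondence from matchings $\pi_x$ to pairs $(\sigma,\varepsilon) \in S_{n_x} \times \{0,1\}^{n_x}$: to each alternating cycle of $\tau_x \cup \pi_x$ of length $2k$ one associates a $k$-cycle of $\sigma$ by reading off the visited $\tau_x$-pair indices, using $\varepsilon$ to record the $2^{k-1}$ orientations when $k \geq 2$. This maps exactly $2^{n_x - c(\sigma)}$ matchings onto each $\sigma$, all with $c(\tau_x,\pi_x) = c(\sigma)$, and is designed so that the pair $P_i$ is preserved by $\pi_x$ if and only if $i$ is a fixed point of $\sigma$. Summing $N^{c(\tau_x,\pi_x)}$ over the preimage of each $\sigma$ gives $2^{n_x - c(\sigma)} N^{c(\sigma)} = 2^{n_x}(N/2)^{c(\sigma)}$; dividing by the unconstrained sum produces exactly the Ewens fixed-point probability defined in \eqref{eq:defpEwens}.

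The main technical difficulty will lie in making the matching-to-permutation correspondence rigorous: in particular, fixing a canonical rule for the orientation data $\varepsilon$ that resolves the $2^{k-1}$-fold ambiguity in traversing each alternating cycle of length $2k \geq 4$, and checking that alternating cycles of length $2$ in $\tau_x \cup \pi_x$ correspond precisely to fixed points of $\sigma$, so that preserved-pair statistics translate into fixed-point statistics. With the single-vertex identity in hand, the product form over $V_1$ follows routinely from the tower property, since the local Ewens factor at $x$ is independent of the conditioning on $(\pi_{x'})_{x' \neq x}$.
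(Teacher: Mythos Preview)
Your proposal is correct and follows essentially the same route as the paper. Both arguments proceed in two stages: first, condition on $m$ and all pairings except $\pi_x$, identify the conditional law of $\pi_x$ as proportional to $N^{c(\tau_x,\pi_x)}$, and relate this to the Ewens measure via the ``double-cover'' correspondence between perfect matchings of $2n_x$ points and permutations of $n_x$ points; second, obtain the product over $V_1$ by the tower property (the paper writes this as an induction on $|W|$). The only cosmetic difference is that the paper phrases the matching--permutation correspondence probabilistically (choose a uniformly random orientation of the half-loops, then draw $\sigma$ from Ewens, and compute that $\bar\P(\pi_x=\tilde\pi_x)\propto N^{c(\tilde\pi_x)}$ because exactly $2^{c(\tilde\pi_x)}$ orientations are compatible with $\tilde\pi_x$), whereas you phrase it as a direct many-to-one map $\pi_x\mapsto\sigma$ with fibre size $2^{n_x-c(\sigma)}$; these are the same computation. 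One small wording issue: you write $\varepsilon\in\{0,1\}^{n_x}$ but then say it records $2^{k-1}$ choices per $k$-cycle, so really $\varepsilon$ lives in a set of size $2^{n_x-c(\sigma)}$ (e.g.\ by fixing $\varepsilon_{i^*}=0$ at the smallest index of each cycle); this is harmless but worth stating cleanly.
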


\begin{proof}	
{Let $(a_e)_{e\in E}\in(\mathbb{N}_0)^E$ and $m \in \Mcal$. The proof of the lemma proceeds in two steps. In the first step, we fix a single vertex $x \in V$ and study the joint distribution of double links on the edges adjacent to $x$, conditioned on $m$ and on the pairings at all vertices $z \in V$ with $z \neq x$. More precisely, we will show that for any $(\pi_z)_{z \in V \setminus \{x\}}$ with $\pi_z \in \Pcal_z(m)$ for all $z \in V \setminus \{x\}$, {we have}
\begin{equation} \label{eq:proofEwensfirststep}
\P\Big(\tilde{k}_{\{x,y\}} =a_{\{x,y\}} \, \forall y \sim x \ \Big| \ m, (\pi_z)_{z \in V \setminus \{x\}}\Big)
= p\Big(N/2,\,n_x,\,(v_{y,x})_{y\sim x},\,\big(a_{\{x,y\}}\big)_{y\sim x}\Big)\,.
\end{equation}
In the second step we will then provide an argument based on induction to deduce \eqref{eq:lemmaewens} from  \eqref{eq:proofEwensfirststep}.}

{Let us begin with the proof of \eqref{eq:proofEwensfirststep} and let $(\pi_z)_{z \in V \setminus \{x\}}$ with $\pi_z \in \Pcal_z(m)$ for all $z \in V \setminus \{x\}$. Given $m$ and $(\pi_z)_{z \in V \setminus \{x\}}$ we obtain a collection of closed loops, which do not visit $x$ and of \lq half-loops\rq{}, namely open paths starting and ending at $x$ and not visiting $x$ in between. The number of such half-loops is given by $n_x$. 
From the definition of the RPM it follows that, conditioned on $m$ and $(\pi_z)_{z \in V \setminus \{x\}}$, the probability of any pairing $\pi_x \in \Pcal_x(m)$ at $x$ is proportional to~$N$ to the power of the number of loops in the resulting configuration. More precisely, if for any $\pi_x \in \Pcal_x(m)$ we denote by $c(\pi_x)$ the number of loops touching $x$ in the configuration $(m,(\pi_z)_{z \in V})$ and if we denote by~$\bar{\mathbb{P}}$ the conditional probability when~$m$ and~$(\pi_z)_{z\in V\setminus\{x\}}$ are fixed, then for any $\tilde{\pi}_x \in \Pcal_x(m)$, {we have}
\begin{equation} \label{eq:correctdistribution}
\bar{\mathbb{P}}(\pi_x=\tilde{\pi}_x) = \frac{N^{c(\tilde{\pi}_x)}}{\tilde{\Z}}
\end{equation}
for some normalising constant $\tilde{\Z}=\tilde{\Z}(n_x,N)$. 
We will now explain a procedure how to construct the pairings at $x$ with conditional distribution given by \eqref{eq:correctdistribution}. We proceed by 
\begin{itemize}
\item[(i)] labelling the half-loops incident to~$x$ with integers from~$1$ to~$n_x$,
\item[(ii)] choosing uniformly and independently an orientation for each of these half-loops, independently of everything else. Then the situation at~$x$ is that we have~$n_x$ ingoing links and~$n_x$ outgoing links, each half-loop starting with an outgoing link and finishing with an ingoing link,
\item[(iii)] drawing a permutation of the set~$\{1,\,\ldots,\,n_x\}$ according to the Ewens distribution of parameter~$\theta=N/2$, independently of everything else,
\item[(iv)] pairing the ingoing link of the half-loop number~$i$ with the outgoing link of the half-loop number~$\sigma(i)$ for every~$i\in\{1,\,\ldots,\,n_x\}$, as drawn in Figure~\ref{fig:Ewens},
\item[(v)] eventually, forgetting the orientation of the loops, so to obtain a random pairing~$\pi_x$ of the links at the site~$x$.
\end{itemize}

\begin{figure}
\centering
\begin{subfigure}[t]{.3\textwidth}
  \centering
  \includegraphics[width=\textwidth]{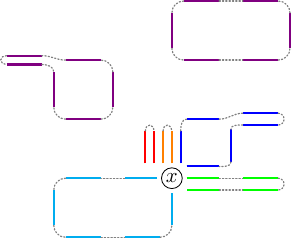}
  \caption{}
  \label{fig:sub1b}
\end{subfigure}
\hspace{6em}
\begin{subfigure}[t]{.4\textwidth}
  \centering
  \includegraphics[width=\textwidth]{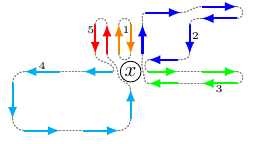}
  \caption{}
  \label{fig:sub2b}
\end{subfigure}
\caption{(a) A link configuration $m \in \Mcal$ and a pairing configuration $\pi_z \in \Pcal_z(m)$ at all vertices $z \neq x$. The pairings are represented by the dotted gray lines. We obtain a collection of two closed loops not visiting $x$ and of $5=n_x$ open paths starting and ending at $x$ not visiting $x$ in between. The colours in the figure are only for illustration. (b) Illustration of the pairing construction at $x$. The open paths were labelled from $1$ to $5$ and for each of the paths we chose an orientation uniformly at random. Our permutation $\sigma \in S_5$ is given by $\sigma(1)=1$, $\sigma(2)=3$, $\sigma(3)=2$, $\sigma(4)=5$ and $\sigma(5)=4$. The ingoing-link of path $i$ is paired with the outgoing link of path $\sigma(i)$. We obtain three closed loops at $x$ and one of them is a loop of length two, i.e., a double link.}
\label{fig:Ewens}
\end{figure}

We will now show that under this procedure, the distribution of $\pi_x$ under $\bar{\P}$ is indeed given by \eqref{eq:correctdistribution}. For this, let~$\mathcal{O}$ denote the set of the~$2^{n_x}$ possible orientations of the half-loops at~$x$. Moreover, for every pairing~$\tilde{\pi}_x \in \Pcal_x(m)$ of the links incident to~$x$, let~$\mathcal{O}(\tilde{\pi}_x)\subset\mathcal{O}$ denote the set of all orientations of the half-loops at~$x$ which are compatible with~$\tilde{\pi}_x$, in the sense that the orientations are consistent along the cycles of~$\tilde{\pi}_x$. Then we have~$|\mathcal{O}(\tilde{\pi}_x)|=2^{c(\tilde{\pi}_x)}$.  Therefore, letting~$O$ be the random orientation of the half-loops chosen during the above procedure, the conditional probability to obtain a given pairing~$\tilde{\pi}_x \in \Pcal_x(m)$ is given by}

	{
	\begin{align*}
		\bar{\mathbb{P}}(\pi_x=\tilde{\pi}_x)
		&\ =\ 
		\sum_{o\in\mathcal{O}(\tilde{\pi}_x)}
		\bar{\mathbb{P}}(O=o)
		\,\bar{\mathbb{P}}(\pi_x=\tilde{\pi}_x\ |\ O=o)
		\ =\ 
		\sum_{o\in\mathcal{O}(\tilde{\pi}_x)}
		\frac{1}{2^{n_x}}
		\times
		\frac{(N/2)^{c(\tilde{\pi}_x)}}{Z(N/2,\,n_x)}\\
		&\ =\ 
		2^{c(\tilde{\pi}_x)}
		\times
		\frac{1}{2^{n_x}}
		\times
		\frac{(N/2)^{c(\tilde{\pi}_x)}}{Z(N/2,\,n_x)}
		\ =\ 
		\frac{N^{c(\tilde{\pi}_x)}}{2^{n_x}Z(N/2,\,n_x)}
		\,,
	\end{align*}
		}
as required. Let us now consider the event
	$$
	\mathcal{A}_x
	\ =\ \big\{w \in \widetilde\Wcal : \, \tilde{k}_{\{x,y\}}= a_{\{x,y\}}\, \forall y\sim x \big\}.
	$$
With the above construction, a half-loop of length two incident to~$x$ (which corresponds to a vertical pairing at~$y$ on the edge~$\{x,\,y\}$) with a certain label, say~$i$, becomes a closed loop of length two if and only if~$i$ is a fixed point of the permutation drawn in the above procedure.
	Thus, for every~$x\in V_1$, we have
$$
		\mathbb{P}_{G, U,  N, \beta}
		\Big(\,\mathcal{A}_x
		\ \Big|\ m,\,(\pi_y)_{y\in V\setminus\{x\}}\,\Big)
		\ =\ 
		p\Big(
		N/2,\,n_x,\,(v_{y,x})_{y\sim x},\,\big(a_{\{x,y\}}\big)_{y\sim x}
		\Big)
		\,,
$$
with~$p$ the function defined by~\eqref{eq:defpEwens}. This proves \eqref{eq:proofEwensfirststep}.

	Now, to conclude the proof, it is enough to show that, for every~$W\subseteq V_1$, we have
	\begin{equation}
		\label{eq:indepE}
		\mathbb{P}_{G, U,  N, \beta}
		\Bigg(\,
		\bigcap_{x\in W} \mathcal{A}_x
		\ \Bigg|\ {m},\,(\pi_y)_{y\in V_2}
		\,\Bigg)
		\ =\ \prod_{x\in W}
		\mathbb{P}_{G, U,  N, \beta}
		\Big(\,\mathcal{A}_x
		\ \Big|\ {m},\,(\pi_y)_{y\in V_2}
		\,\Big)
		\,.
	\end{equation}
	We proceed by induction on the size of~$W$.
	The claim is obvious if~$W$  has cardinality $1$. 
	Assume now that~$0\leq r<|V_1|$ is such that~\eqref{eq:indepE} is true for every~$W\subset V_1$ with~$|W|=r$, and let~$W\subset V_1$ with~$|W|=r+1$.
{Choosing an arbitrary point~$x\in W$ and using the tower property of conditional expectations, we may write
	\begin{equation} \label{eq:inductionstep}
	\begin{aligned}
		\mathbb{P}
		\bigg(\,
		\bigcap_{z\in W} \mathcal{A}_z
		\ \Big|\ {m},\,(\pi_y)_{y\in V_2}
		\,\bigg)
		&\ =\ 
		\mathbb{E}
		\Bigg[\,
		\mathbb{P}
		\bigg(
		\bigcap_{z\in W} \mathcal{A}_z
		\ \Big|\ 
	{m},\,(\pi_y)_{y\in V\setminus\{x\}}
		\bigg)
		\ \bigg|\ 
		{m},\,(\pi_y)_{y\in V_2}
		\,\Bigg]\\
		&\ =\ 
		\mathbb{E}
		\Bigg[
		\bigg(\prod_{z\in W\setminus\{x\}}
		\mathbbm{1}_{\mathcal{A}_z}\bigg)
		\mathbb{P}
		\Big(\,
		\mathcal{A}_x
		\ \Big|\ 
	{m},\,(\pi_y)_{y\in V\setminus\{x\}}\,
		\Big)
		\ \bigg|\ 
	{m},\,(\pi_y)_{y\in V_2}
		\,\Bigg] \\ 
		&\ =\ 
		\mathbb{P}
		\Bigg(
		\bigcap_{z\in W\setminus\{x\}}
		\mathcal{A}_z
		\ \Bigg|\ 
		{m},\,(\pi_y)_{y\in V_2}
		\,\Bigg)
		\times
		\mathbb{P}
		\Big(\,
		\mathcal{A}_x
		\ \Big|\ 
	{m},\,(\pi_y)_{y\in V\setminus\{x\}}\,
		\Big),
	\end{aligned}
	\end{equation}
where, in the second equality, we used the fact that, for every~$z\in V_1\setminus\{x\}$, since~$z$ is not a neighbour of~$x$, the event~$\mathcal{A}_z$ is measurable with respect to the sigma-algebra generated by the vertical pairings~$(\pi_y)_{y\in V\setminus\{x\}}$ and in the last equality we applied \eqref{eq:proofEwensfirststep}. Using the induction hypothesis, we deduce that~\eqref{eq:indepE} holds for~$W$, which finishes the proof by induction and,  consequently,   the proof of the lemma. }
\end{proof}

With the result of Lemma \ref{lemma:ewens} at hand, we are now  ready prove the following stochastic domination.

\begin{lemma}
	\label{lemma:upperstochdomination}
For any $x\in V_1$, {let us denote by} $X_x:{\widetilde{W}} \to \N_0$ {the number of double links around the {vertex~$x$}, that is to say,}
	$$
	\forall w \in {\widetilde{W}}, \qquad X_x(w)
	\ :=\ 
	\sum_{y\sim x} \tilde{k}_{\{x,y\}}(w).
	$$
Let further~$(Y_x)_{x\in V_1}$ be i.i.d.\ random variables taking values in~$\N_0$, with distribution given by
	\begin{equation}
	\label{eq:distribYx}
	\forall k\in\mathbb{N}\,,
	\qquad
	P(Y_x\geq k)
	\ =\ 
	\min\bigg\{1,\ 
	\sum_{j=k}^\infty
		\frac{\max\big\{1,N/2\big\}^j}{j!} \bigg\}
	\,.
	\end{equation}
Then, {we have the stochastic domination}~$(X_x)_{x\in V_1}\preceq(Y_x)_{x\in V_1}$.
\end{lemma}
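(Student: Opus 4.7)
The overarching plan is to condition on the link configuration $m$ and on the vertical pairings $(\pi_y)_{y\in V_2}$ and then leverage Lemma~\ref{lemma:ewens}. Under this conditioning the family $(X_x)_{x\in V_1}$ becomes a vector of independent random variables, and each $X_x$ is distributed as $|{\rm FP}(\sigma_x)\cap A_x|$, where $\sigma_x$ is a Ewens permutation on $[n_x]$ of parameter $\theta=N/2$, and $A_x\subseteq[n_x]$ is the deterministic (given the conditioning) disjoint union $\bigcup_{y\sim x} A_{x,y}$, of size $\sum_{y\sim x}v_{y,x}\leq n_x$. The task therefore reduces to a uniform tail estimate for the number of fixed points of a Ewens permutation lying inside a prescribed subset.

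The technical heart of the proof is the following uniform tail bound, which I would prove via the $k$-th factorial moment: for every $\theta>0$, $n\geq k$, and $A\subseteq[n]$,
\begin{equation*}
P_{\theta,n}^{\text{Ewens}}\big(|{\rm FP}(\sigma)\cap A|\geq k\big)\ \leq\ \frac{\max\{1,\theta\}^k}{k!}.
\end{equation*}
The starting point is the identity that a permutation having a fixed $k$-subset $I$ among its fixed points corresponds to an arbitrary Ewens-weighted permutation on $[n]\setminus I$ decorated with $k$ singleton cycles. Combined with~\eqref{eq:zewens}, this yields
\begin{equation*}
P_{\theta,n}^{\text{Ewens}}\big(I\subseteq{\rm FP}(\sigma)\big)=\frac{\theta^k\,Z(\theta,n-k)}{Z(\theta,n)}=\frac{\theta^k}{\prod_{j=1}^{k}(\theta+n-j)}.
\end{equation*}
Summing over $I\subseteq A$ gives the factorial moment, and the inequality $\mathbb{P}(X\geq k)\leq\mathbb{E}\!\left[\binom{X}{k}\right]$ (valid for non-negative integer $X$) reduces matters to a short case split on $\theta$. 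If $\theta\geq 1$, one has $\theta+n-j\geq n-j+1$, so the denominator dominates $n!/(n-k)!$ and, together with $\binom{|A|}{k}\leq\binom{n}{k}$, collapses the bound to $\theta^k/k!$. If $\theta\leq 1$, the elementary inequality $\theta+n-j\geq\theta(n-j+1)$, which is equivalent to $(1-\theta)(n-j)\geq 0$, introduces an extra factor $\theta^k$ in the denominator that cancels the numerator, leaving $1/k!$.

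The remaining step is to promote this conditional marginal estimate to a joint stochastic domination. Since the uniform bound $\max\{1,N/2\}^k/k!$ is independent of the conditioning and is itself dominated by $\sum_{j\geq k}\max\{1,N/2\}^j/j!=P(Y_x\geq k)$, a coordinate-wise quantile coupling using a family of i.i.d.\ uniform random variables (independent of everything else) simultaneously produces random variables $(\tilde Y_x)_{x\in V_1}$ i.i.d.\ with the law of $Y_x$ and satisfying $X_x\leq\tilde Y_x$ almost surely; here one exploits the product structure of the conditional law of $(X_x)_{x\in V_1}$ provided by Lemma~\ref{lemma:ewens}. The only mildly subtle step I anticipate is the $\theta<1$ branch of the factorial moment estimate, where the non-trivial inequality $\theta+n-j\geq\theta(n-j+1)$ is essential; everything else is a direct invocation of Lemma~\ref{lemma:ewens} together with a textbook coupling argument.
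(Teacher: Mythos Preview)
Your proposal is correct and follows the same overall architecture as the paper: condition on $m$ and $(\pi_y)_{y\in V_2}$, invoke Lemma~\ref{lemma:ewens} for conditional independence of the $X_x$, establish a uniform tail bound for the number of fixed points of a Ewens permutation, and conclude the joint domination from the fact that the bound is independent of the conditioning.

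The one genuine difference lies in how the Ewens tail bound is obtained. The paper first drops the restriction to $A$, then decomposes $P^{\text{Ewens}}_{\theta,n}(|{\rm FP}(\sigma)|\geq k)$ according to the exact number $j$ of fixed points, bounds the derangement weight on the complement by the full partition function $Z(\theta,n-j)$, and after cancellation with~\eqref{eq:zewens} obtains the full tail $\sum_{j\geq k}\max\{1,\theta\}^j/j!$ directly. You instead use the factorial-moment route $\P(X\geq k)\leq\E\binom{X}{k}$, compute $P^{\text{Ewens}}_{\theta,n}(I\subseteq{\rm FP}(\sigma))=\theta^k Z(\theta,n-k)/Z(\theta,n)$ for $|I|=k$, and arrive at the single term $\max\{1,\theta\}^k/k!$, which is in fact the leading term of the paper's sum and hence slightly sharper before you weaken it to match the distribution of $Y_x$. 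Both approaches rely on the product formula~\eqref{eq:zewens} and on the same elementary inequality $\theta+n-j\geq\min\{1,\theta\}(n-j+1)$; your case split on $\theta\gtrless 1$ is exactly this inequality unpacked. The explicit quantile coupling you describe for the final step is a correct way to spell out what the paper states in one sentence.
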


\begin{proof}
To begin, we note that Lemma~\ref{lemma:ewens} implies that, conditionally on $m \in \Mcal$ and $(\pi_y)_{y\in V_2}$ with $\pi_y \in \Pcal_y(m)$ for all $y \in V_2$, the variables~$(X_x)_{x\in V_1}$ are independent and, for every~$x\in V_1$ and every~$k\in\mathbb{N}$, we have that,
	\begin{align*}
		\mathbb{P}_{G, U,  N, \beta}
		\Big(\,
		X_x\geq k
		\ \Big|\ 
		{m},\,(\pi_y)_{y\in V_2}
		\,\Big)
		&\ =\ 
		\sum_{(a_y)_{y\sim x}\,:\,\sum_{y\sim x} a_y\,\geq\,k}
		\,
		p\left(
		\frac{N}{2},\,n_x,\,(v_{y,x})_{y\sim x},\,(a_y)_{y\sim x}
		\right)
		\\
		&\ =\ 
		q\left(\frac{N}{2},\,n_x,\,\sum_{y\sim x}v_{y,x},\,k
		\right)
		\,,
	\end{align*}
	where, for every~$\theta\in\mathbb{R}_0^+$ and~$n,\,v,\,k\in\mathbb{N}$, we defined
	$$
	q(\theta,\,n,\,v,\,k)
	\ :=\ 
	P_{\theta,\,n}^{\text{Ewens}}\Big(\big|{\rm FP}(\sigma)\cap\{1,\,\ldots,\,v\}\big|\,\geq\,k\Big)
	\,.
	$$
	Now note that, for every~$\theta\in\mathbb{R}_0^+$,~$n\geq k\geq 0$ and~$v\geq 0$, we have
	\begin{align*}
		q(\theta,\,n,\,v,\,k)
		&\ \leq\ 
	P_{\theta,\,n}^{\text{Ewens}}\Big(\big|{\rm FP}(\sigma)\big|\,\geq\,k\Big)
		\ =\ 
		\frac{1}{Z(\theta,\,n)}
		\sum_{j=k}^n
		\binom{n}{j}
		\theta^{j}
		\sum_{\sigma\in S_{n-j}}
		\theta^{c(\sigma)}
		\mathbbm{1}_{\{{\rm FP}(\sigma)=\varnothing\}}
		\\
		&\ \leq\ 
		\frac{1}{Z(\theta,\,n)}
		\sum_{j=k}^n
		\binom{n}{j}
		\theta^{j}
		Z(\theta,\,n-j)
		\ =\ 
		\sum_{j=k}^n
		\frac{\theta^j (1+n-1)\ldots(1+n-j)}{j!(\theta+n-1)\ldots(\theta+n-j)}
		\\
		&\ \leq\ 
		\sum_{j=k}^n
		\frac{\theta^j}{j! \, \min\{1,\theta\}^j}
		\ =\ 
		\sum_{j=k}^n
		\frac{\max\{1,\theta\}^j}{j!}
		\ \leq\ 
		\sum_{j=k}^\infty
		\frac{\max\{1,\theta\}^j}{j!}
		\,,
	\end{align*}
	where in the fourth step we applied~\eqref{eq:zewens}.
Since~$q(\theta,\,n,\,v,\,k)$ is a probability (and is~$0$ if~$k>n$), we deduce that for every~$n,\,v,\,k\geq 0$, we have
	$$q\left(\frac{N}{2},\,n,\,v,\,k\right)
	\ \leq\ 
	\min\bigg\{1,\ 
	\sum_{j=k}^\infty
		\frac{\max\big\{1,N/2\big\}^j}{j!} \bigg\}
	\ =\ 
	P(Y_x\geq k)\,.
	$$
	Thus, we proved the stochastic domination conditionally on~${m}$ and~$(\pi_y)_{y\in V_2}$, with a stochastic upper bound which only depends on~$N$, hence the claimed uniform stochastic upper bound follows.
\end{proof}

\subsubsection{Concluding proof of the upper bound in Theorem \ref{theo:coexistence}}\label{suse:conclusion-ub}

We are now in a position to prove the upper bound on the density of any microscopic loop.

\begin{proof}[Proof of Proposition~\ref{prop:coexistence}]
Let~$k\in\mathbb{N}$ and~$\gamma\in\Sigma(\mathcal{L})$ with~$\alpha(\gamma)=2k$.
{Upon exchanging~$V_1$ and~$V_2$, we may assume that~$|V_1\cap\mathrm{supp} \,(\gamma)|\leq k$.}
Let~$a\in\mathbb{N}$. 
Applying Lemma~\ref{lemma:colourswitch} we have that,
\begin{equation} \label{eq:finalizingtheproof}
\begin{aligned}
\mu_a(\gamma)
& \leq \psi(\gamma)^a \, \, \Ecal\bigg(\prod_{x \in V_1} \prod_{y \sim x} \frac{k_{\{x,y\}}!}{(k_{\{x,y\}}-a \, q_{\{x,y\}}(\gamma))!} \, \mathbbm{1}_{\{k_{\{x,y\}} \geq a \times q_{\{x,y\}}(\gamma)\}}\bigg) \\
& \leq \psi(\gamma)^a \, \mathbb{E}\bigg(\prod_{x\in V_1 \cap\, \text{supp}\,(\gamma)}\frac{X_x!}{(X_x-a\times n_x(\gamma))!} \, \mathbbm{1}_{\{X_x \geq a\times n_x(\gamma)\}} \bigg) \\
& \leq \psi(\gamma)^a \, \prod_{x\in V_1 \cap\, \text{supp}\,(\gamma)} e^{\max\{1,\frac{N}{2}\}} \, E\bigg(\frac{Z!}{(Z-a \times n_x(\gamma))!} \mathbbm{1}_{\{Z \geq a \times n_x(\gamma)\}}\bigg) \\
& = \psi(\gamma)^a \, \prod_{x\in V_1 \cap\, \text{supp}\,(\gamma)} e^{\max\{1,\frac{N}{2}\}} \, \max\{1,\frac{N}{2}\}^{a \times n_x(\gamma)}  \\
& \leq \psi(\gamma)^a \, \max\{e,e^{\frac{N}{2}}\}^{k} \, \max\{1,\frac{N}{2}\}^{a \times k} \\
& \leq \big( \psi(\gamma) \, \max\{e,e^{\frac{N}{2}}\}^k \, \max\{1,\frac{N}{2}\}^{k} \Big)^a = \lambda(\gamma)^a,
\end{aligned}
\end{equation}
where in the second step we applied Lemma \ref{lemma:equivalence} and then we used that $$
\frac{n_1! n_2!}{(n_1-k_1)! (n_2-k_2)!} \leq \frac{(n_1+n_2)!}{(n_1+n_2-k_1-k_2)!}
$$ 
for any $n_1,n_2,k_1,k_2 \in{\N_0}$.
In the third step we applied Lemma \ref{lemma:upperstochdomination}. Here, $Z$ is a random variable that is Poisson distributed with parameter $\max\{1,\frac{N}{2}\}$ and by $E$ we denote the expectation with respect to $Z$.  
This concludes the proof of the proposition.
\end{proof}

\subsection{Proof of the lower bound in Theorem \ref{theo:coexistence}}\label{sect:lowerbound}
In this section we prove the lower bound of Theorem \ref{theo:coexistence}. Throughout the section, we fix parameters $\beta, N >0$ and $M,\Delta\in\N$, and a function  $U:\N_0 \to \R^+$ that is $M$-decaying and fully supported, i.e., $U(n)>0$ for all $n\in\N$.
\begin{proposition}\label{prop:lb-beta}
 Fix $k\in\N$. There exists $\varepsilon=\varepsilon(k,\Delta,\beta,N,U)$ such that for every finite connected graph $G=(V,E)$ with maximal degree $\Delta$ and every $\gamma\in\Sigma(\mathcal{L})$ with $\alpha(\gamma)=2k$,
	\begin{equation} 
		\mathcal{E}_{G,U,N,\beta}[k_\gamma]>\varepsilon\, .
	\end{equation}
\end{proposition}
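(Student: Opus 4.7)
The plan is to first reduce $\mathcal{E}_{G,U,N,\beta}[k_\gamma]$ to a simpler expectation through an exact identity. Starting from the definition~\eqref{eq:measure} of the measure, by isolating in a configuration $\omega=(\ell_1,\dots,\ell_{|\omega|})$ one of the loops in the class $\gamma$ and exploiting the symmetry among the indices coming from the $|\omega|!$ factor, one obtains
\begin{equation}\label{eq:lb-identity}
\mathcal{E}_{G,U,N,\beta}\bigl[k_\gamma\bigr]
\;=\;\frac{|\gamma|\,N\beta^{2k}}{4k}\,\mathcal{E}_{G,U,N,\beta}\!\left[\,\prod_{x\in V}\frac{U(n_x(\omega)+n_x(\gamma))}{U(n_x(\omega))}\,\right].
\end{equation}
Since $|\gamma|\ge 1$, the prefactor is already a positive constant depending only on $k,N,\beta$, and the task reduces to bounding from below, uniformly in the finite graph $G$, the expectation of the positive random variable $R_\gamma(\omega):=\prod_{x\in\mathrm{supp}(\gamma)}U(n_x(\omega)+n_x(\gamma))/U(n_x(\omega))$.

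Next, I would use a simple truncation argument. Define, for $T\in\mathbb{N}$, the good event $A_T:=\{n_x(\omega)\le T\text{ for every }x\in\mathrm{supp}(\gamma)\}$. Since $U$ is strictly positive pointwise, $n_x(\gamma)\le 2k$ and $|\mathrm{supp}(\gamma)|\le 2k$, the constant
\(\tilde c_T:=\bigl(\min_{0\le s\le T,\,0\le j\le 2k}U(s+j)/U(s)\bigr)^{2k}\)
is strictly positive and depends only on $k,T,U$. On the event $A_T$ one has $R_\gamma(\omega)\ge \tilde c_T$, and hence $\mathcal{E}_{G,U,N,\beta}[R_\gamma]\ge \tilde c_T\,\mathcal{P}_{G,U,N,\beta}(A_T)$. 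The proof is thus reduced to exhibiting a threshold $T=T(\beta,N,U,\Delta)$ for which $\mathcal{P}_{G,U,N,\beta}(A_T)\ge 1/2$, uniformly in $G$.

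The main obstacle is this last uniform-in-$G$ control on the local time. By Markov's inequality and a union bound over the at most $2k$ vertices of $\mathrm{supp}(\gamma)$, it is enough to prove a uniform moment bound $\mathcal{E}_{G,U,N,\beta}[n_x]\le C(\beta,N,U,\Delta)$. I would derive it by passing to the RPM of Section~\ref{sect:randompathmodel} and exploiting the $M$-goodness of $U$: iterating~\eqref{eq:goodweightfunction} gives $U(n)\le M^n U(0)/n!$, so the $U$-factor in the RPM weight~\eqref{eq:RPMmeasure} suppresses large local times in a Poisson-like fashion, while the sum over pairings at a single vertex $x$ contributes at most the Ewens normalizer $Z(N/2,n_x)=\prod_{i=0}^{n_x-1}(N/2+i)$ from~\eqref{eq:zewens}. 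Balancing these two contributions, in the spirit of Lemmas~\ref{lemma:ewens} and~\ref{lemma:upperstochdomination}, which integrate out the pairings vertex by vertex via a tower-expectation argument, should yield a stochastic Poisson-type upper bound on $n_x$ uniform in $G$. The technical heart of the argument lies precisely in making this stochastic domination quantitative while handling the interaction between pairings at neighbouring vertices. Once the uniform local-time bound is established, combining it with \eqref{eq:lb-identity} and the truncation yields $\mathcal{E}_{G,U,N,\beta}[k_\gamma]\ge \tfrac{N\beta^{2k}}{8k}\,\tilde c_T =: \varepsilon(k,\Delta,\beta,N,U)>0$ with $T$ chosen as a fixed multiple of $k C$, proving the proposition.
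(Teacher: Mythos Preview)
Your approach is correct and somewhat more direct than the paper's. Both routes ultimately hinge on the same technical ingredient: a uniform-in-$G$ bound on the moments of the local time (Lemma~\ref{lemma:local-time-beta} in the paper, and its Corollary~\ref{lemma:event-A-beta}). Where you use the exact identity~\eqref{eq:lb-identity} in the RWLS (which is correct: it follows by singling out one loop in the class $\gamma$ and using the $1/|\omega|!$ symmetry, exactly as in the first steps of the proof of Theorem~\ref{theo:enhancements-new}) and then truncate, the paper instead passes to the RPM and builds an injective ``insertion map'' $\phi$ that adds one loop of shape $\gamma$ to configurations with bounded local time, comparing $\mu(\phi(w))$ with $\mu(w)$. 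These are two sides of the same coin: your identity is the analytic version of the paper's combinatorial insertion, and your truncation plays the role of the paper's event $\mathcal{A}$. Your route avoids the map machinery and the passage through the RPM for the main step.

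One remark: your sketch for the local-time bound via Ewens permutations and Lemmas~\ref{lemma:ewens}--\ref{lemma:upperstochdomination} is not how the paper proceeds. Those lemmas control the number of \emph{double links}, not the local time. The paper's Lemma~\ref{lemma:local-time-beta} instead uses a direct link-removal comparison in the RPM at a single vertex (in the spirit of \cite[Lemma~3.1]{L-T-exponential}): one reduces the link configuration on the edges incident to $o$ to its parity, bounds the number of pairings and the ratio of $U$-factors using $M$-goodness, and obtains super-exponential decay of $\mathbb{P}(n_o\ge k)$. Your Poisson-type intuition is right, but the mechanism is this direct surgery rather than an Ewens argument.
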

{The above proposition then implies the lower bound of Theorem~\ref{theo:coexistence}, by simply considering the loops of length~$2k$ which are the concatenation of~$k$ times the same loop of length~$2$.} 

We  first prove  the above proposition in the framework of the RPM. Proposition \ref{prop:lb-beta} will then follow immediately from Lemma \ref{lemma:equivalence}. 
We begin by showing the following lemma, which states that the local time has finite moments.
\begin{lemma}\label{lemma:local-time-beta}
For all $m \in \N$, there exists $b=b(m,\Delta,\beta,N, U)<\infty$ such that, for every {finite} connected graph $G=(V,E)$ and every $o\in V$ having degree at most $\Delta$,
	\begin{equation} \label{eq:localtimefiniteexp}
		\mathbb{E}_{G,U,N,\beta}[n_o^m]\le b\, .
	\end{equation}
\end{lemma}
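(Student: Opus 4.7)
My plan is to establish the tail estimate
\[
\mathbb{P}_{G,U,N,\beta}(n_o \ge k) \le \frac{A^k}{k!}, \qquad k \in \mathbb{N},
\]
for a constant $A = A(\Delta,\beta,N,U) < \infty$ independent of the finite connected graph~$G$ (with $\deg_G(o)\le\Delta$). Once this is established, the desired moment inequality~\eqref{eq:localtimefiniteexp} follows at once from the summation-by-parts bound $\mathbb{E}[n_o^m] \le \sum_{k \ge 1} m k^{m-1}\, \mathbb{P}(n_o \ge k)$ together with $\sum_k k^{m-1} A^k/k! < \infty$.

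The key observation is that iterating the $M$-good property \eqref{eq:goodweightfunction} gives the super-exponential estimate $U(k) \le (M^k/k!)\,U(0)$, so the $U$-weight at~$o$ alone produces factorial suppression in~$k$. The plan is to upgrade this pointwise inequality into a partition-function inequality via a surgery on the RPM that removes a \emph{half-loop at~$o$}: starting from one of the $2k$ link endpoints at~$o$, one follows links and pairings at non-$o$ vertices and returns to~$o$ through the link endpoint that was paired with the starting one. The pairing $\pi_o$ thereby decomposes the~$2k$ link endpoints at~$o$ into~$k$ such half-loops.

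Concretely, for each $w\in\widetilde{\Wcal}$ with $n_o(w)=k\ge 1$ I choose canonically (e.g.\ through the pair with the smallest label) a half-loop $H(w)$ at~$o$ of some length $L\ge 2$, and let $R(w)$ be the configuration obtained by deleting its~$L$ links together with the pairings at its intermediate vertices. Then $R(w)\in\widetilde{\Wcal}$ and $n_o(R(w))=k-1$, since the two endpoint-links of $H(w)$ at~$o$ were paired only with each other and are now removed. Writing $v_z$ for the number of visits of $H(w)$ to an intermediate vertex $z$ (so $\sum_z v_z=L-1$) and $u_e$ for the number of uses of an edge~$e$, the weight ratio $\mu^\ell_{G,U,N,\beta}(w)/\mu^\ell_{G,U,N,\beta}(R(w))$ splits as the product of four factors: (i)~a factor $\beta^L\prod_{e}(m_e(w)\cdots(m_e(w)-u_e+1))^{-1}$ from the link weights; (ii)~a factor $U(k)/U(k-1)\le M/k$ from the $U$-weight at~$o$; (iii)~a factor $\prod_z U(n_z(w))/U(n_z(w)-v_z)\le \prod_z M^{v_z}/v_z!$ from the intermediate $U$-weights, by iterating $M$-goodness; and (iv)~a factor at most $\max(1,N)$ from the change in $N^{\alpha(w)}$, since $H(w)$ is either itself a complete cycle (so $\alpha$ decreases by one) or an arc of a longer cycle (so $\alpha$ is unchanged).

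Summing $\mu^\ell$ over all $w$ with $n_o(w)=k$ and fixed image $R(w)$ amounts to summing over half-loops $H$ at~$o$, and this yields a bound of the form
\[
\mu^\ell(n_o=k)\ \le\ \frac{M\max(1,N)}{k}\,\Biggl(\sum_{H}\beta^{L(H)}\prod_{z}\frac{M^{v_z(H)}}{v_z(H)!}\Biggr)\,\mu^\ell(n_o=k-1).
\]
A half-loop of length~$L$ is a sequence of $L$ nearest-neighbour steps in~$G$ from~$o$ to~$o$, so is counted by at most $\Delta^L$; together with the factorial weights from the intermediate-vertex $U$-ratios, the inner sum is bounded by a constant $B=B(\Delta,\beta,N,M)<\infty$ independent of~$G$. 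Setting $A=M\max(1,N)B$ and iterating in~$k$ produces $\mu^\ell(n_o=k)\le (A^k/k!)\,\mu^\ell(n_o=0)\le (A^k/k!)\,\mathcal{Z}_{G,U,N,\beta}^\ell$, which is precisely the tail bound.

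The main obstacle is the convergence of the sum over half-loops: for large~$\beta$ the naive geometric estimate $\sum_L(\beta\Delta)^L$ diverges, so one must genuinely exploit the factorial suppression from the intermediate $U$-weights. A careful combinatorial accounting---viewing the half-loop as a nearest-neighbour walk from~$o$ to~$o$ and attaching the weights $M^{v_z}/v_z!$ to its intermediate visit multiplicities, then using the multinomial identity to sum over allocations of the $L-1$ intermediate visits among the vertices---converts the inner sum into something controlled by a random-walk-type generating function that is finite for every fixed $\beta$. This is the technically delicate step where the full strength of the $M$-goodness assumption is used.
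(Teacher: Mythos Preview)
Your plan has a genuine gap at exactly the step you flag as ``technically delicate'': the sum over half-loops
\[
B\ =\ \sum_H \beta^{L(H)}\prod_{z}\frac{M^{v_z(H)}}{v_z(H)!}
\]
is \emph{not} finite for large $\beta$ on general bounded-degree graphs. Restrict the sum to half-loops $H$ that are self-avoiding cycles through~$o$; for these every intermediate multiplicity is $v_z(H)=1$, so $\prod_z 1/v_z(H)!=1$ and each such $H$ contributes $\beta^{L}M^{L-1}$. On any graph containing arbitrarily long cycles through~$o$ (e.g.\ large boxes in~$\mathbb Z^d$, $d\ge2$) the number of self-avoiding cycles of length~$L$ through~$o$ grows exponentially in~$L$, so already this sub-sum diverges once $\beta M$ exceeds the reciprocal of the connective constant. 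The multinomial identity cannot help here: the factorial suppression $\prod_z 1/v_z!$ only bites when the half-loop revisits intermediate vertices, and for self-avoiding excursions it is identically~$1$. Thus your constant $A=M\max(1,N)B$ is $+\infty$ for large~$\beta$, and the iteration yields nothing. (There is also a smaller imprecision: the excursion from a link-endpoint $a$ at~$o$ returns to~$o$ at some endpoint $b'$ which is in general \emph{not} the $\pi_o$-partner of~$a$, so after deletion $\pi_o$ has two dangling endpoints and $R(w)\notin\widetilde\Wcal$ unless you re-pair them; this is fixable but affects your cycle-count factor~(iv).)

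The paper's argument avoids this by a \emph{purely local} surgery: instead of following a half-loop out into the graph (which can be arbitrarily long), one compares a configuration with $n_o(m)=n$ to one in which the links on the edges~$E_o$ incident to~$o$ are reduced to their parities $\xi_e\in\{0,1\}$, leaving all links and pairings outside~$V_o$ unchanged. The factorial suppression then comes not from $U(k)/U(k-1)$ but from the link-weight denominators $\prod_{e\in E_o}1/m_e!$, which, since $\sum_{e\in E_o}m_e=2n$, satisfy $\prod_{e\in E_o}1/m_e!\le (2n)^{\Delta}\,\Delta^{2n}/(2n)!$. The only further quantities to control are the number of pairing choices at $o$ and its neighbours and the number of link profiles on~$E_o$ with given~$n_o$; both are polynomial in~$n$ and involve only~$\Delta$, never the size or global geometry of~$G$. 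This is what makes the bound uniform in the graph and in~$\beta$.
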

An immediate consequence of the above lemma is the following corollary. 
\begin{corollary}\label{lemma:event-A-beta}
 Fix $a\in\N$. 
  There exists $\bar n=\bar n(a,\Delta,\beta,N, U)< \infty$ such that for every {finite} connected graph $G=(V,E)$ of maximal degree $\Delta$ and any $A\subset V$ such that $|A|=a$, 
	\begin{eqnarray}
		\P_{G,U,N,\beta}(\forall x\in A,\: n_x\leq \bar n )\ge \frac12\,.
	\end{eqnarray}
\end{corollary}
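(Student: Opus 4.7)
The plan is to deduce the corollary directly from Lemma~\ref{lemma:local-time-beta} via a union bound combined with Markov's inequality. First, I would invoke Lemma~\ref{lemma:local-time-beta} with the moment $m=1$ (any fixed positive integer $m$ would work, but $m=1$ is the cheapest), obtaining a constant $b=b(1,\Delta,\beta,N,U)<\infty$ such that $\mathbb{E}_{G,U,N,\beta}[n_x]\le b$ for every vertex $x$ of degree at most $\Delta$ in any finite connected graph $G=(V,E)$. Crucially, $b$ depends only on $\Delta$, $\beta$, $N$, $U$, and not on the graph $G$ or on the vertex~$x$.

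Next, for a fixed finite set $A\subset V$ with $|A|=a$ and any threshold $t>0$, Markov's inequality yields $\mathbb{P}_{G,U,N,\beta}(n_x>t)\le b/t$ for each $x\in A$. A union bound over the $a$ vertices of $A$ then gives
\begin{equation*}
\mathbb{P}_{G,U,N,\beta}\big(\exists x\in A:\ n_x>t\big)\ \le\ \frac{ab}{t}\,.
\end{equation*}
It suffices to choose $t=\bar n:=2ab$ (taking the ceiling if an integer is required), which depends only on $a,\Delta,\beta,N,U$. The complementary event then satisfies
\begin{equation*}
\mathbb{P}_{G,U,N,\beta}\big(\forall x\in A:\ n_x\le\bar n\big)\ \ge\ 1-\frac{ab}{\bar n}\ =\ \frac{1}{2}\,,
\end{equation*}
which is the desired conclusion.

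There is no real obstacle here: the content of the corollary is essentially a tail bound packaged as a high-probability statement, and the only nontrivial input, namely the uniform bound on $\mathbb{E}[n_x^m]$, is already supplied by Lemma~\ref{lemma:local-time-beta}. The one thing to verify is that the bound given by Lemma~\ref{lemma:local-time-beta} is indeed uniform in the graph $G$ (depending only on the maximal degree $\Delta$ and on the parameters $\beta,N,U$), so that $\bar n$ inherits the desired independence from $|V|$ and from the specific geometry of~$G$; this is exactly what the quantifier structure of Lemma~\ref{lemma:local-time-beta} asserts.
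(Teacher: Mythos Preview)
Your proof is correct and follows essentially the same approach as the paper's own proof: invoke Lemma~\ref{lemma:local-time-beta} with $m=1$, apply Markov's inequality and a union bound, and set $\bar n=2ab$.
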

\begin{proof}
	Using the union bound, Markov's inequality and \eqref{eq:localtimefiniteexp},  and choosing $\bar n =2ab$, where $b$ is as in Lemma \ref{lemma:local-time-beta} with $m=1$, we have that, 
	$$
	\P_{G,U,N,\beta}(\forall x\in A,\: n_x\leq \bar n ) \geq 1-{\sum_{x\in A}\P _{G,U,N,\beta}(n_x>2ab)}\geq \frac12.
	$$
	This concludes the proof of the corollary.
\end{proof}
We now present the proof of Lemma~\ref{lemma:local-time-beta}, which proceeds in the same spirit as the proof of \cite[Lemma~3.1]{L-T-exponential}.

\begin{proof}[Proof of Lemma \ref{lemma:local-time-beta}] 
Let $m \in \N$. We show that for any $k \in \N$, there exists $c_m(k)=c_m(k,\Delta,\beta,N,U)$ such that,  {writing} $k_m:= \lceil k^\frac{1}{m} \rceil$,
	\begin{eqnarray} \label{eq:showck}
		\P_{G,U,N,\beta}(n_o\ge k_m)\le c_m(k)
	\end{eqnarray}
	and such that
	\begin{eqnarray} \label{eq:sumckfinite}
		\sum_{k\ge 1}c_m(k)<\infty \ ,
	\end{eqnarray}
from which the desired result follows immediately.
	Fix any $k\geq 1$. We set $E_o:=\{e \in E: \, o \in e\} $ and we set $V_o := \{x \in V: x \sim o \} \cup \{o\} $. We let {$\Sigma=\{0,1\}^{E_o}$}. {Recalling that~$\mu$ denotes the non-normalized measure defined by~\eqref{eq:RPMmeasure}, we begin by writing}
	\begin{equation} \label{eq:localtimegreaterk}
		\begin{aligned}
			\mu(n_o \geq k_m) 
			& = \sum_{n=k_m}^\infty \sum_{\xi \in \Sigma} \sum_{\substack{m \in \Mcal: \\ m_e \in 2\N +\xi_e {\forall e \in E_o,} \\ n_o(m)=n}} \sum_{\pi \in \Pcal(m)} \mu((m,\pi)) \\
			& = \sum_{n=k_m}^\infty \sum_{\xi \in \Sigma} \sum_{\substack{m^\prime \in \Mcal: \\ m^\prime_e =\xi_e \, \forall e \in E_o}}  \sum_{\pi^\prime \in \Pcal(m^\prime)} \sum_{\substack{m \in \Mcal: \\ m_e=m_e^\prime \, \forall e \in E \setminus E_o,\\ m_e \in 2\N +\xi_e \, \forall e \in E_o, \\ n_o(m)=n}} \sum_{\substack{\pi \in \Pcal(m): \\ \pi_x=\pi^\prime_x \, \forall x \in V \setminus V_o}} \mu((m,\pi)) \, \prod_{x \in V_o} \frac{1}{(2n_x(m^\prime)-1)!!}\,,
		\end{aligned}
	\end{equation}
	{where we let~$(2k-1)!!:=(2k)!/(k!\,2^k)$, which corresponds to the number of pairings of~$2k$ elements.}
Let $n \geq k_m$. Let $\xi \in \Sigma$, $m^\prime \in \Mcal$ such that ${m^\prime_e} =\xi_e$ for all $e \in E_o$, and $\pi^\prime \in \Pcal(m^\prime)$. Let $m \in \Mcal$ such that $m_e =m_e^\prime$ for all $e \in E \setminus E_o$ and $m_e \in 2\N + \xi_e$ for all $e \in E_o$ and $n_o(m)=n$ and let $\pi \in \Pcal(m)$ such that $\pi_x=\pi_x^\prime$ for all $x \in V \setminus V_o$.
	 We  now compare 
	$\mu((m,\pi))$ with $\mu((m^\prime, \pi^\prime))$.
	We note that,
	\begin{itemize}
		\item[(i)] $
		\# \text{ loops in } (m,\pi) \leq \# \text{ loops in } (m^\prime,\pi^\prime)+n,
		$
		\item[(ii)] $\sum_{e \in E_o} (m_e -m_e^\prime) \leq 2n$,
		\item[(iii)] $\prod_{e \in E_o} \frac{1}{m_e!} \leq \Big(\frac{1}{\lfloor \frac{2n}{\text{deg}(o)} \rfloor!}\Big)^{\text{deg}(o)} \leq (2n)^{{\rm deg}(o)}\ \frac{(\text{deg}(o))^{2n}}{(2n)!}$.
	\end{itemize}
	From (i) - (iii) we deduce that
	\begin{equation} \label{eq:measurecomparison}
		\begin{aligned}
			\mu((m,\pi)) 
			& = \mu((m^\prime,\pi^\prime)) \, N^{\# \text{ loops in } (m,\pi) - \# \text{ loops in } (m^\prime, \pi^\prime)} \, \prod_{e \in E_o} \frac{\beta^{m_e-m_e^\prime}}{m_e!} \, \prod_{x \in V_o} \frac{U(n_x(m))}{U(n_x(m^\prime))} \\
			& \leq \mu((m^\prime,\pi^\prime))  \, \frac{c_1^n(2n)^{\text{deg}(o)}}{(2n)!} \,  \prod_{x \in V_o} \frac{U(n_x(m))}{U(n_x(m^\prime))} ,
		\end{aligned}
	\end{equation}
	where $c_1:= \max\{1,N \}\, \max\{1,\beta\}^2 \, \text{deg}(o)^2$.
	Plugging \eqref{eq:measurecomparison} into \eqref{eq:localtimegreaterk}, and using the bounds
	\begin{align}
		\big|\left\{\pi\in\mathcal{P}(m)\mid\pi_x=\pi_x',\ \forall x\in V\setminus V_o\right\}\big|\le& \prod_{x\in V_o}(2n_x(m)-1)!!\\
		\Big|\Big\{\substack{m \in \Mcal: \, m_e=m_e^\prime \, \forall e \in E \setminus E_o, \\ \, m_e \in 2\N+\xi_e \, \forall e \in E_o, \, n_o(m)=n}\Big\}\Big|\le& (2n)^{{\rm deg}(o)}
	\end{align}
	we deduce that
	\begin{equation} \label{eq:laststepbound}
		\begin{aligned}
			\mu(n_o \geq k_m) 
			& \leq \sum_{n=k_m}^\infty \frac{c_1^n(2n)^{\text{deg}(o)}}{(2n)!} \, \sum_{\xi \in \Sigma} \sum_{\substack{m^\prime \in \Mcal: \\ m^\prime_e =\xi_e \, \forall e \in E_o}}  \sum_{\pi^\prime \in \Pcal(m^\prime)} \mu((m^\prime,\pi^\prime)) \\
			& \qquad \qquad \times \sum_{\substack{m \in \Mcal: \\ m_e=m_e^\prime \, \forall e \in E \setminus E_o,\\ m_e \in 2\N +\xi_e \, \forall e \in E_o, \\ n_o(m)=n}} \sum_{\substack{\pi \in \Pcal(m): \\ \pi_x=\pi^\prime_x \, \forall x \in V \setminus V_o}} \prod_{x \in V_o } \frac{U(n_x(m))}{U(n_x(m^\prime))} \,  \frac{1}{(2n_x(m^\prime)-1)!!} \\
			& \leq \sum_{n=k_m}^\infty \frac{c_1^n(2n)^{\text{deg}(o)}}{(2n)!} \, \bigg[ \max\limits_{0 \leq i \leq j \leq {i}+n} \frac{U(j)(2j-1)!!}{U(i)(2i-1)!!}\bigg]^{\text{deg}(o)+1}\\
			& \qquad \qquad \times \sum_{\xi \in \Sigma} \sum_{\substack{m^\prime \in \Mcal: \\ m^\prime_e =\xi_e \, \forall e \in E_o}}  \sum_{\pi^\prime \in \Pcal(m^\prime)} \mu((m^\prime,\pi^\prime)) \, \, \Big|\Big\{\substack{m \in \Mcal: \, m_e=m_e^\prime \, \forall e \in E \setminus E_o, \\ \, m_e \in 2\N+\xi_e \, \forall e \in E_o, \, n_o(m)=n}\Big\}\Big|  \\
			& \leq \sum_{n=k_m}^\infty \frac{\tilde{c}_1^n}{(2n)!} \, (2n)^{2\,\text{deg}(o)} \, \sum_{\xi \in \Sigma} \sum_{\substack{m^\prime \in \Mcal: \\ m^\prime_e =\xi_e \, \forall e \in E_o}}  \sum_{\pi^\prime \in \Pcal(m^\prime)} \mu((m^\prime,\pi^\prime)) \\
			& \leq \sum_{n \geq k_m} \frac{\tilde{c}_1^n}{(2n)!} (2n)^{2\,\text{deg}(o)} \, \Z_{G,U,N,\beta}\,,
		\end{aligned}
	\end{equation}
	where $\tilde{c}_1:=  (2M)^{\Delta+1} \, c_1$.  
	In the third step of \eqref{eq:laststepbound} we used that $U$ being fast decaying implies that for any $n \in \N$ and any $0 \leq i \leq j \leq i+n$, 
	\begin{equation}
		\frac{U(j)(2j-1)!!}{U(i)(2i-1)!!} 
		\leq 2^{j-i} \, \frac{U(j)}{U(i)} \, j \, (j-1) \cdots (i+1) 
		\leq (2M)^{j-i} 
		\leq (2M)^n\,.
	\end{equation}
	Setting 
	$$
	c_{m}(k):= \sum_{n \geq k_m} \frac{\tilde{c}_1^n}{(2n)!} (2n)^{2\Delta}
	$$ 
	we deduce \eqref{eq:showck}. Since there exists $c^\prime < \infty$ such that
	$$
	c_{m}(k) \leq \frac{\tilde{c}_1^{k_m}}{(2k_m)!} \, (2k_m)^{2\Delta} \sum_{n \geq 0} \frac{\tilde{c}_1^n}{(2n)!} (n+1)^{2\Delta} \leq c^\prime \, \frac{\tilde{c}_1^{k_m}}{(2k_m)!} \, (2k_m)^{2\Delta}\,,
	$$
	and
		$$
		\sum_{k=1}^\infty \frac{\tilde{c}_1^{k_m}}{(2k_m)!} \, (2k_m)^{2\Delta} \leq \sum_{n=1}^\infty \frac{\tilde{c}_1^n}{(2n)!} \, (2n)^{2\Delta} \, n^m < \infty\,,
		$$
	we deduce \eqref{eq:sumckfinite} and the proof of the lemma is concluded. 
\end{proof}

We now present the proof of Proposition \ref{prop:lb-beta}.

	\begin{proof}[Proof of Proposition \ref{prop:lb-beta}] 
Fix $\gamma\in\Sigma(\mathcal{L})$ with $\alpha(\gamma)=2k$.  Recalling from \eqref{eq:def-supp} that we denote by $\mathrm{supp}(\gamma)$ the set of distinct vertices visited by $\gamma$, we have $a_\gamma:=|{\mathrm{supp}(\gamma)}|\le 2k$. Let $m_e(\gamma)$ denote the number of times {that} the loop $\gamma$ uses the edge $e$.
Fix $ n_0=n_0(k,\Delta, \beta,N,U) $, such that 
\begin{equation} \label{eq:boundonethird}
\P(\forall x\in {\mathrm{supp}(\gamma)},\ n_x \leq n_0) \geq \frac12\,.  
\end{equation}
The existence of $n_0$ follows from Corollary \ref{lemma:event-A-beta}, choosing $ n_0(k,\Delta, \beta,N,U) =\bar n (2k,\Delta,\beta,N,U)$.
Consider the events
	\begin{equation*}
		\mathcal{A}=\{w \in \widetilde\Wcal \, : \forall x\in {\mathrm{supp}(\gamma)},\ n_x \leq n_0, \, \tilde k_\gamma=0 \}\,, \quad \  \mathcal{B}=\{w \in \widetilde\Wcal \, : \, \forall x\in {\mathrm{supp}(\gamma)}, \ n_x\leq n_0+n_x(\gamma), \, \tilde k_\gamma>0 \}\,.
	\end{equation*}
We introduce the map $\phi:\mathcal{A}\to \mathcal{B}$, which for any $w \in \Acal$ acts by inserting $m_e(\gamma)$ links on the edge $e$ such that they have the highest labels, and by pairing them with an arbitrary deterministic rule such that we obtain one additional cycle, which under the map $\vartheta$ (defined in \eqref{eq:def-vartheta}) reduces to $\gamma$.
For any $w \in \Acal$ we then have that
\begin{equation} \label{eq:imagephi}
	\mu(\phi(w)) =\mu(w) \, \beta^{2k} N\prod_{\substack{e\in E\\ m_e(\gamma)>0}}\frac{1}{(m_e(w)+1)\cdots(m_e(w)+m_e(\gamma))} \, \prod_{x\in {\mathrm{supp}(\gamma)}}\frac{U(n_x(w)+n_x(\gamma))}{U(n_x(w))}\  
	\geq c \, \mu(w)\,,
\end{equation}
where
\begin{equation*}
c=c(k,\Delta,\beta,N,U) := \frac{\beta^{2k} N}{[(2n_0+1)\cdots(2n_0+2k)]^{2k}} \, \bigg(\min_{a\le n_0,\,j\le k}\frac{U(a+j)}{U(a)} \bigg)^{2k}.
\end{equation*}
Since $\phi$ is an injection, we deduce from \eqref{eq:imagephi} that 
\begin{equation}
	\begin{split}
 \P(\Bcal) \geq \P(\phi(\Acal)) \geq c \, \P(\Acal) &= c \, \big(\P(\forall x\in {\mathrm{supp}(\gamma)},\ n_x \leq n_0)-\P(\forall x\in {\mathrm{supp}(\gamma)},\ n_x \leq n_0,\ {\tilde k}_\gamma>0)\big) \\
 &\ge c{\big(} \P(\forall x\in {\mathrm{supp}(\gamma)},\ n_x \leq n_0)-\P(\Bcal){\big)}
\end{split}
\end{equation}
and the latter, together with \eqref{eq:boundonethird}, implies
\begin{equation} \label{eq:implying}
\P(\Bcal) \geq \frac{c}{2(1+c)} \, .
\end{equation}
Applying Lemma \ref{lemma:equivalence}, we deduce from \eqref{eq:implying} that
$$
\mathcal{E}(k_\gamma)=\E(\tilde{k}_\gamma)\ge\P(\tilde{k}_\gamma>0)  \geq \P(\Bcal) \geq \frac{c}{2 (1+c)}\,,
$$
which concludes the proof of the proposition.
\end{proof}

%
%

\section{Proof of Theorem \ref{theo:nomacroscopicloops}}
In this section we present the proof of Theorem \ref{theo:nomacroscopicloops}, which {follows} from Proposition \ref{prop:correlationdecay} and Proposition \ref{prop:fromspinstoloops} below. 

\subsection{Extension of the McBryan and Spencer proof}
Throughout this section we fix $d=2$ and $L>0$.
The next proposition states that the spin-spin correlation $\langle S^1_x S_y^1 S_x^2 S_y^2 \rangle_{G_L,N,\beta}$ admits polynomial decay for any $N \in \N$ with $N>1$ and any $\beta \in \R^+$. The proof of the proposition is an adaptation of the proof of polynomial decay for $\langle S^1_x S_y^1\rangle_{G_L,N,\beta}$ given in  \cite{FriedliVelenik,McBryanSpencer}.
	\begin{proposition}
		\label{prop:correlationdecay}
For any $\beta \in \R^+$ and $N \in \N$ with $N>1$, there exists $c =c(\beta,N) \in (0,1)$ such that for any $x, y \in \Z^2$,  
		\begin{equation} \label{eq:correlationdecay}
			\lim_{L \to \infty} \, \big|\langle S^1_x S_y^1 S_x^2 S_y^2 \rangle_{G_L,N,\beta}\big|  \leq \frac{1}{8} \, |x-y|^{-c}.
		\end{equation}
	\end{proposition}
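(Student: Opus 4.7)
The proof will be an adaptation of the classical McBryan--Spencer argument to the four-point observable $S_x^1 S_y^1 S_x^2 S_y^2$. I proceed in three steps.

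\textbf{Step 1: algebraic reduction.} For each $z \in \Lambda_L$ I parametrize the first two components of $S_z$ in polar form, writing $S_z^1 = \rho_z\cos\theta_z$ and $S_z^2 = \rho_z\sin\theta_z$ with $\rho_z \in [0,1]$ and $\theta_z \in [0,2\pi)$. Since $N\ge 2$, both the Hamiltonian and the measure $d\boldsymbol S$ are invariant under the global $SO(2)$ rotation $\theta_z \mapsto \theta_z + \alpha$ acting in the $(1,2)$--plane. Using $2\cos\theta\sin\theta = \sin(2\theta)$ and $2\sin A\sin B = \cos(A-B) - \cos(A+B)$,
\[ S_x^1 S_x^2 S_y^1 S_y^2 = \frac{\rho_x^2\rho_y^2}{8}\bigl[\cos(2(\theta_x-\theta_y)) - \cos(2(\theta_x+\theta_y))\bigr]. \]
The second summand transforms as $e^{4i\alpha}$ under the rotation and so vanishes in the Gibbs expectation, leaving
\[ \langle S_x^1 S_y^1 S_x^2 S_y^2\rangle_{G_L,N,\beta} = \tfrac18\,\mathrm{Re}\,\langle \rho_x^2 \rho_y^2\, e^{2i(\theta_x-\theta_y)}\rangle_{G_L,N,\beta}. \]

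\textbf{Step 2: complex contour shift.} For any real-valued family $(a_z)_{z\in\Lambda_L}$, since the integrand is entire and $2\pi$-periodic in each $\theta_z$, the substitution $\theta_z \to \theta_z + ia_z$ leaves the integral unchanged. After the shift, $e^{2i(\theta_x-\theta_y)}$ acquires a prefactor $e^{-2(a_x-a_y)}$, while every edge weight $\exp(\beta\rho_u\rho_v\cos(\theta_u-\theta_v))$ becomes $\exp(\beta\rho_u\rho_v\cos(\theta_u-\theta_v+i(a_u-a_v)))$, whose modulus equals $\exp(\beta\rho_u\rho_v\cos(\theta_u-\theta_v)\cosh(a_u-a_v))$. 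Using $|\rho_u\rho_v\cos(\theta_u-\theta_v)|\le 1$ and $\cosh(a_u-a_v)-1\ge 0$, the excess of this shifted weight over the original one is at most $e^{\beta[\cosh(a_u-a_v)-1]}$ per edge. Taking absolute values inside the integral and bounding the remaining integral by the partition function yields
\[ \bigl|\langle S_x^1 S_y^1 S_x^2 S_y^2\rangle_{G_L,N,\beta}\bigr| \le \tfrac18\,\exp\!\Big(-2(a_x-a_y) + \beta\!\sum_{\{u,v\}\in E_L}\![\cosh(a_u-a_v)-1]\Big). \]

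\textbf{Step 3: optimization.} I will take $a_z = \lambda\, h_L(z)$ with $\lambda>0$ small and $h_L$ the discrete harmonic function on $\Lambda_L$ with $h_L(x)=1$, $h_L(y)=0$ and suitable boundary conditions. Then $|h_L(u)-h_L(v)| \le 1$, and for $\lambda \le 1$ the bound $\cosh t - 1 \le (\cosh 1)\,t^2/2$ gives
\[ \beta\sum_{\{u,v\}}[\cosh(\lambda(h_L(u)-h_L(v)))-1] \le \tfrac{\beta\cosh 1}{2}\,\lambda^2\,\mathscr{E}_L(x,y), \]
where $\mathscr{E}_L(x,y) := \sum_{\{u,v\}\in E_L}(h_L(u)-h_L(v))^2$ is the Dirichlet energy of $h_L$, equal to the effective conductance between $x$ and $y$ in $\Lambda_L$. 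Classical potential theory on $\mathbb{Z}^2$ yields $\limsup_{L\to\infty}\mathscr{E}_L(x,y) \le C_0/\log|x-y|$ for $|x-y|$ large. Minimizing $-2\lambda + \tfrac{\beta\cosh 1}{2}\lambda^2\mathscr{E}_L(x,y)$ at $\lambda_\star = 2/(\beta(\cosh 1)\mathscr{E}_L(x,y))$ produces a value of order $-c(\beta)\log|x-y|$ in the limit $L\to\infty$, from which the claimed polynomial bound follows (a cosmetic choice of constant ensures $c<1$). The main obstacle lies precisely in this last potential-theoretic input: since $\mathbb{Z}^2$ is recurrent, the discrete Green's function diverges logarithmically with $L$, so one cannot simply take $h_L$ to be a literal Green's function. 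Either a truncated harmonic function or the piecewise-linear choice of $a_z$ originally used in \cite{McBryanSpencer,FriedliVelenik} has to be employed to obtain a bound on $\mathscr{E}_L(x,y)$ that is uniform in $L$; once this is in place, the remainder of the argument is elementary.
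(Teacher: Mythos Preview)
Your Steps~1 and~2 are correct and follow the same McBryan--Spencer complex-translation strategy as the paper; your treatment of general $N\ge 2$ via the radii $\rho_z$ is, if anything, tidier than the paper's, which does $N=2$ explicitly and then sketches the extension to $N>2$.

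Step~3, however, has a genuine gap. You invoke the bound $\cosh t-1\le(\cosh 1)\,t^2/2$ under the hypothesis $\lambda\le 1$ (relying only on the crude estimate $|h_L(u)-h_L(v)|\le 1$), but the optimiser you then write down, $\lambda_\star=2/\big(\beta(\cosh 1)\,\mathscr{E}_L(x,y)\big)$, is of order $\log|x-y|$ and violates that constraint. With $\lambda$ held $\le 1$ the exponent $-2\lambda+O(\lambda^2\mathscr{E}_L)$ stays bounded, and no polynomial decay follows. Your closing paragraph also misdiagnoses the difficulty: with your normalisation $h_L(x)=1$, $h_L(y)=0$, neither $h_L$ nor its Dirichlet energy $\mathscr{E}_L(x,y)$ blows up with $L$, so the recurrence of $\Z^2$ is not the obstacle here. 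What is actually missing is a \emph{pointwise} gradient estimate of the form $\sup_{\{u,v\}\in E_L}|h_L(u)-h_L(v)|=O\big(\mathscr{E}_L(x,y)\big)$, which is precisely what would allow $\lambda$ of order $\log|x-y|$ while keeping every $|a_u-a_v|$ bounded. The paper avoids this issue by following \cite{FriedliVelenik} verbatim: one takes $a_z=\epsilon\,\big(g_L(x,z)-g_L(y,z)\big)$ with a fixed $\epsilon=O(1/\beta)$, uses that the discrete gradient of $g_L(x,\cdot)$ is uniformly $O(1)$ on $\Z^2$, and reads off the exponent $-c\,\big[g_L(x,x)-2g_L(x,y)+g_L(y,y)\big]\sim -c\log|x-y|$ from the potential-kernel asymptotics. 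Up to the rescaling $h_L\propto g_L(x,\cdot)-g_L(y,\cdot)$ this is equivalent to your scheme, but the gradient input must be supplied for the optimisation step to close.
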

	\begin{proof} 
		To begin, we let $N=2$ and fix $\beta \in \R^+$. 
		We parametrize the unit sphere by angles such that $S_x=(\cos \theta_x, \sin \theta_x)$, where $\theta_x \in [0,2\pi)$ for any $x \in \Lambda_L$. 
		Using trigonometric identities, the invariance of the measure under simultaneous rotation of all spins and the fact that $|\text{Re}(z)| \leq |z|$ for all $z \in \C$, we obtain that for any $L>0$ and any $x,y \in \Lambda_L$,
		\begin{equation} \label{eq:spins1}
			\begin{aligned}
				\big| \langle S_x^1 S_y^1 S_x^2 S_y^2 \rangle_{G_L,N,\beta} \big|
				& = \frac{1}{8} \, \big|\langle \cos(2 (\theta_x-\theta_y)) - \cos(2 (\theta_x+\theta_y)) \rangle_{G_L,N,\beta} \big| \\
				& = \frac{1}{8} \, \big| \langle \cos(2 (\theta_x-\theta_y)) \rangle_{G_L,N,\beta}\big| \\
				& \leq \frac{1}{8} \, \big| \langle e^{2i(\theta_x-\theta_y)} \rangle_{G_L,N,\beta} \big|.
			\end{aligned}
		\end{equation}
		The derivation of the upper bound \eqref{eq:correlationdecay} now follows analogously to the proof of \cite[Theorem 9.12]{FriedliVelenik}, in which an upper bound on the quantity $| \langle e^{i(\theta_x-\theta_y)} \rangle_{G_L,N,\beta}  |$ instead of $| \langle e^{2i(\theta_x-\theta_y)} \rangle_{G_L,N,\beta}  |$ was derived. More precisely, following \cite{FriedliVelenik} and shifting the integration of the angles to the complex plane, there exists $c=c(\beta) \in (0,\infty)$ such that for any $L>0$ and any $x,y \in \Lambda_L$,   
		\begin{equation} \label{eq:upperboundGreen}
			\big| \langle e^{2i(\theta_x-\theta_y)} \rangle_{G_L,N,\beta}  \big| 
			\leq e^{-c \, \big( g_L(x,x)-g_L(x,y)-g_L(y,x)+g_L(y,y)\big)},
		\end{equation}
		where $g_L(\cdot,\cdot)$ denotes the Green function in $G_L$ of the simple random walk $X=(X_k)_{k \geq 0}$ on $\Z^2$  defined by 
		$$
		\forall x,y \in \Lambda_L, \quad \quad g_L(x,y):= \mathbf{E}_x\big( \sum_{n=0}^{\tau_L-1} \mathbbm{1}_{\{X_n=y\}}\big),
		$$
		where $\tau_L:=\inf\{k \geq 0 \, : \, X_k \in \Z^2 \setminus \Lambda_L\}$ and $\mathbf{E}_x$ denotes the expectation of the simple random walk on $\Z^2$ starting at $x$.
		By \cite[Theorem 1.6.2]{LawlerPotentialKernel} {we have}
		\begin{equation} \label{eq:potentialkernel}
			\lim_{L \to \infty} g_L(x,x)-g_L(x,y) = \lim_{L \to \infty} g_L(y,y)-g_L(y,x) = \frac{2}{\pi} \, \log||x-y||_2 +O(1) \quad \text{as } ||x-y||_2 \to \infty.
		\end{equation}
		From \eqref{eq:spins1}, \eqref{eq:upperboundGreen} and \eqref{eq:potentialkernel} we conclude \eqref{eq:correlationdecay} in the case $N=2$. 
		
		The case $N>2$ is handled by a straightforward generalization as explained in \cite{McBryanSpencer}. Namely, we parametrize the $N$-sphere by angles $\theta^{(1)}, \dots,\theta^{(N-2)}, \psi$ with $|\theta^{(r)}| \leq \frac{\pi}{2}$ for all $r \in [N-2]$ and $|\psi| \leq \pi$ in such a way that only the first two components \smash{$S^{(1)}$}, \smash{$S^{(2)}$} of a unit spin vector depend on $\psi$. We can then apply the method of \cite{FriedliVelenik} as in the previous case performing a shift of integration only with respect to $\psi$.
	\end{proof}

\subsection{From spin correlations to connection probabilities}
In this section, we fix $\beta,L \in \R^+$ and $d,N \in \N$ with $N \geq 2$. We let $G=G_L$ and $U=U_N^s$, where $U_N^s$ was defined in \eqref{eq:SpinWeightfunction}. For lighter notation, we will omit all sub-scripts. 
The next proposition gives an upper and a lower bound on the spin correlation $\langle S_x^1 S_y^1 S_x^2 S_y^2 \rangle$ for any $x \neq y \in \Lambda_L$ in terms of the probability of observing a loop connecting $x$ and $y$ in the corresponding random walk loop soup. {Recall from \eqref{eq:measure} that we denote by $\Ecal$ the expectation under the measure $\Pcal$.}
\begin{proposition}
	\label{prop:fromspinstoloops}
For any $m \in \N$ and any $x \neq y \in \Lambda_L$, 
	\begin{equation} \label{eq:mainequation}
		c_1 \, \Pcal(x \leftrightarrow y)^{1+\frac{1}{2^{(m-1)}}} \leq \langle S_x^1 S_y^1 S_x^2 S_y^2 \rangle \leq  \frac{1}{2N} \, \Pcal(x \leftrightarrow y),
	\end{equation}
	where
$c_1=c_1(\beta,N,m)>0$.
\end{proposition}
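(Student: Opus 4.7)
My plan is to begin with a BFS-type loop representation for the four-point function, extending the identity \eqref{eq:spinconnection}. After integrating the spin variables against the weight $U^s_N$, the correlation $\langle S_x^1 S_y^1 S_x^2 S_y^2\rangle$ can be expressed as a sum over pairs $(\gamma_1,\gamma_2)\in\Lcal(x,y)^2$ of open walks from $x$ to $y$, each weighted by $\beta^{|\gamma_1|+|\gamma_2|}$, inserted into the RWLS partition function with the local time at every site shifted by the contributions of both walks. This is the direct analogue of \eqref{eq:partitionwithwalk} for two simultaneous open walks.

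For the upper bound, I would concatenate every pair into a rooted oriented loop $\ell := \gamma_1\oplus r(\gamma_2)$ (with $r$ denoting time-reversal) visiting both $x$ and $y$. A short bookkeeping step gives $n_z(\gamma_1)+n_z(\gamma_2) = n_z(\ell)+\mathbbm{1}_{z\in\{x,y\}}$, and the explicit recursion $U^s_N(n+1) = U^s_N(n)/(N+2n) = U^s_N(n)/(2\tilde{n}_z)$ converts the two-walk weight into the one-loop weight at the cost of a factor $(4\tilde{n}_x\tilde{n}_y)^{-1}$. Since each rooted loop $\ell$ admits exactly $n_y(\ell)$ such decompositions, combining this count with the trivial bounds $n_x(\ell)\le \tilde{n}_x$, $n_y(\ell)\le \tilde{n}_y$ and with the factor $N/2$ in the loop measure yields $\langle\,\cdot\,\rangle\le (2N)^{-1}\mathcal{E}[\#\{i:x,y\in\ell_i\}]$. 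An extra observation---that the ratio $n_x(\ell)n_y(\ell)/(\tilde{n}_x\tilde{n}_y)\le 1$ holds independently at every connecting loop---lets me upgrade this first-moment inequality to the probability $\Pcal(x\leftrightarrow y)$.

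For the lower bound I would reverse the argument. Starting from $\Pcal(x\leftrightarrow y)\le \mathcal{E}[\#\{i:x,y\in\ell_i\}]$ and the same loop-weighted identity, the comparison with $\langle\,\cdot\,\rangle$ now produces the reciprocal ratio $\tilde{n}_x\tilde{n}_y/(n_xn_y)$, which is unbounded. The plan is to apply the Cauchy--Schwarz inequality iteratively to the loop-weighted sum: one Cauchy--Schwarz yields $\mathcal{E}[\#\{\ldots\}]^2\le\langle\,\cdot\,\rangle\cdot A_1$, a second yields $\mathcal{E}[\#\{\ldots\}]^{3/2}\le \langle\,\cdot\,\rangle\cdot A_2^{1/2}$, and after $m-1$ iterations one obtains
\[ \mathcal{E}[\#\{\ldots\}]^{1+1/2^{m-1}}\ \le\ \langle\,\cdot\,\rangle\cdot A_m^{1/2^{m-1}}, \]
where $A_m$ is essentially a $2^{m-1}$-th moment of $\tilde{n}_x\tilde{n}_y$ under a loop-weighted measure. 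This is precisely the exponent $1+1/2^{m-1}$ that appears in \eqref{eq:mainequation}; setting $c_1 := A_m^{-1/2^{m-1}}$ completes the lower bound.

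The principal obstacle is bounding $A_m$ uniformly in $L$. These quantities are moments of the local time under a \emph{biased} measure (the RWLS reweighted by the insertion of an extra loop through $x$ and $y$) rather than under the plain RWLS to which Lemma~\ref{lemma:local-time-beta} applies directly. Transferring the estimate to this biased setting---most likely via the RPM--RWLS equivalence of Lemma~\ref{lemma:equivalence} together with explicit Radon--Nikodym bounds on the insertion of a single loop---is the delicate step, and it dictates how the parameter $m$ must be chosen in order for the constant $c_1$ to be strictly positive.
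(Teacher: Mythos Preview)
Your overall architecture --- derive a loop identity for $\langle S_x^1S_y^1S_x^2S_y^2\rangle$, read off the upper bound from it, and get the lower bound via iterated Cauchy--Schwarz combined with the local-time moment bound of Lemma~\ref{lemma:local-time-beta} --- is exactly the paper's. The identity you aim to rederive by concatenating two open walks is
\[
\langle S_x^1S_y^1S_x^2S_y^2\rangle
=\frac{1}{2N}\,\Ecal\Bigg(\sum_{j=1}^{|\omega|}\frac{n_x(\ell_j)\,n_y(\ell_j)}{\tilde n_x(\omega)\,\tilde n_y(\omega)}\Bigg),
\]
which the paper simply quotes from \cite{QuitmannTaggi}.

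Two points of divergence are worth noting. For the \emph{upper bound}, your stated reasoning (``the ratio $n_x(\ell)n_y(\ell)/(\tilde n_x\tilde n_y)\le 1$ independently at every connecting loop'') only yields $\sum_j\le\#\{j:x,y\in\ell_j\}$, not $\le\mathbbm{1}_{\{x\leftrightarrow y\}}$. The one-line fix, used in the paper, is the global inequality $\sum_j n_x(\ell_j)n_y(\ell_j)\le\big(\sum_j n_x(\ell_j)\big)\big(\sum_j n_y(\ell_j)\big)=n_x(\omega)n_y(\omega)\le\tilde n_x\tilde n_y$, which vanishes unless $x\leftrightarrow y$.

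For the \emph{lower bound}, your ``principal obstacle'' is self-inflicted and disappears with a small change of starting point. Instead of iterating Cauchy--Schwarz on the loop-weighted count $\Ecal[\#\{i:x,y\in\ell_i\}]$, the paper iterates directly on $\Pcal(x\leftrightarrow y)=\Ecal[\mathbbm{1}_{\{x\leftrightarrow y\}}]$: one writes $\mathbbm{1}_{\{x\leftrightarrow y\}}=\mathbbm{1}_{\{x\leftrightarrow y\}}(\tilde n_x\tilde n_y)^{-1/2}(\tilde n_x\tilde n_y)^{1/2}$, applies Cauchy--Schwarz once to separate $\Ecal[\mathbbm{1}_{\{x\leftrightarrow y\}}/(\tilde n_x\tilde n_y)]^{1/2}$ (which is $\le(2N\langle\cdot\rangle)^{1/2}$ by the trivial bound $\mathbbm{1}_{\{x\leftrightarrow y\}}\le\sum_j n_x(\ell_j)n_y(\ell_j)$ in the identity), and then iterates on the remaining factor $\Ecal[\mathbbm{1}_{\{x\leftrightarrow y\}}\tilde n_x\tilde n_y]^{1/2}$. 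Because an indicator (rather than a sum over $j$) is carried along, every factor produced is a \emph{plain} moment $\Ecal[(\tilde n_z)^{2^m}]$ under the unbiased RWLS measure, and Lemma~\ref{lemma:local-time-beta} applies directly via the equivalence. No loop-insertion, biased measure, or Radon--Nikodym argument is needed. (Incidentally, even in your formulation the biased moments $A_m$ reduce to plain ones via $\sum_j\mathbbm{1}_{\{x,y\in\ell_j\}}\le n_x(\omega)$, so the obstacle was never real --- but the indicator route is cleaner.)
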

\begin{proof}
	Fix $m \in \N$ and let $x \neq y \in \Lambda_L$.
For every $\omega \in \Omega$ and $z \in \Lambda_L$, let us write $\tilde{n}_z(\omega):= n_z(\omega) + \frac{N}{2}$.
	Recall from Section \ref{sect:upperbound} that for any $\ell \in \Lcal$ we denote by $n_x(\ell)$ the number of visits of the loop $\ell$ at $x$.
	From \cite[Theorem A.1, Lemma 5.4, Theorem 3.14]{QuitmannTaggi} it is known that the correlation of spins can be written as
	\begin{equation} \label{eq:identity}
		\begin{aligned}
			\langle S_x^1 S_y^1 S_x^2 S_y^2 \rangle 
			& = \frac{2}{N} \, \Ecal\bigg( \sum_{j=1}^{|\omega|} n_x(\ell_j) \, n_y(\ell_j) \, \frac{U_N^s(n_x(\omega)+1)}{U_N^s(n_x(\omega))} \, \frac{U_N^s(n_y(\omega)+1)}{U_N^s(n_y(\omega))} \bigg) \\
			& = \frac{1}{2N} \, \Ecal\bigg( \sum_{j=1}^{|\omega|} n_x(\ell_j) \, n_y(\ell_j) \, \frac{1}{\tilde{n}_x(\omega) \, \tilde{n}_y(\omega)}  \bigg),
		\end{aligned}
	\end{equation}
	where in the last step we used the definition of $U_N^s$. Using that $\sum_{j=1}^{|\omega|} n_x(\ell_j) \, n_y(\ell_j) \leq \mathbbm{1}_{\{ x \leftrightarrow y\}}(\omega) n_x(\omega) n_y(\omega)$ for any $\omega \in \Omega$, we deduce from \eqref{eq:identity} the upper bound in \eqref{eq:mainequation}.
	We now derive the lower bound. Applying the Cauchy-Schwarz inequality iteratively $m$ times, we have that
	\begin{equation} \label{eq:upperbound}
		\begin{aligned}
			\Pcal(x \leftrightarrow y) 
			& = \Ecal\Big( \mathbbm{1}_{\{x \leftrightarrow y\}} \frac{1}{\sqrt{\tilde{n}_x \tilde{n}_y}} \, \sqrt{\tilde{n}_x \tilde{n}_y}\Big) \\
			& \leq \Ecal\Big( \mathbbm{1}_{\{x \leftrightarrow y\}} \,  \frac{1}{\tilde{n}_x \tilde{n}_y} \Big)^{\frac{1}{2}} \, \Ecal\Big(\mathbbm{1}_{\{x \leftrightarrow y\}} \, \tilde{n}_x \tilde{n}_y \Big)^{\frac{1}{2}} \\
			& \leq \Ecal\Big( \mathbbm{1}_{\{x \leftrightarrow y\}} \,  \frac{1}{\tilde{n}_x \tilde{n}_y} \Big)^{\frac{1}{2}} \, \, \Pcal(x \leftrightarrow y)^{\frac{1}{2}-\frac{1}{2^{m}}} \, \, \Ecal\Big((\tilde{n}_x \tilde{n}_y)^{2^{m-1}}\Big)^\frac{1}{2^m} \\
			& \leq \Ecal\Big( \mathbbm{1}_{\{x \leftrightarrow y\}} \,  \frac{1}{\tilde{n}_x \tilde{n}_y} \Big)^{\frac{1}{2}} \, \, \Pcal(x \leftrightarrow y)^{\frac{1}{2}-\frac{1}{2^{m}}} \, \,
\Ecal\Big((\tilde{n}_x)^{2^{m}}\Big)^{\frac{1}{2^{m+1}}}
\Ecal\Big((\tilde{n}_y)^{2^{m}}\Big)^{\frac{1}{2^{m+1}}}\,,
		\end{aligned}
	\end{equation}
where in the last step we applied the Cauchy-Schwarz inequality.
	From \eqref{eq:identity} we further observe that 
	\begin{equation} \label{eq:boundfirstquantity}
		\Ecal\Big( \mathbbm{1}_{\{x \leftrightarrow y\}} \,  \frac{1}{\tilde{n}_x \tilde{n}_y} \Big) \leq 2N \, \langle S_x^1 S_y^1 S_x^2 S_y^2 \rangle.
	\end{equation}
Besides, by \cite[Theorem~3.14]{QuitmannTaggi}, and Lemma~\ref{lemma:local-time-beta}, we have that, for every~$x\in\Lambda_L$,
\begin{equation}
\label{eq:boundmomentloctime}
\Ecal\Big((\tilde{n}_x)^{2^{m}}\Big)
= \E\Big((\tilde{n}_x)^{2^{m}}\Big)
\leq
b\,,
\end{equation}
where~$b=b(m,\,2d,\,\beta,\,N,\,U_N^s)<\infty$ is given by Lemma~\ref{lemma:local-time-beta}, and
with slight abuse of notation we also set $\tilde{n}_o(w):=n_o(w)+\frac{N}{2}$ for any $w \in \widetilde{\Wcal}$.
	Rearranging the terms in  \eqref{eq:upperbound} and using  \eqref{eq:boundfirstquantity} and~\eqref{eq:boundmomentloctime} we deduce the lower bound of \eqref{eq:mainequation}, with
$$
c_1
=
\frac{1}{2N}
b^{-\frac{1}{2^{m-1}}}
\,.
$$
	This ends the proof of the proposition.
\end{proof}

\begin{proof}[Proof of Theorem \ref{theo:nomacroscopicloops}]
	The {result} follows from   Proposition \ref{prop:correlationdecay} and 
	\ref{prop:fromspinstoloops}.
\end{proof}

\section*{Acknowledgements} The authors thank the German Research Foundation (project number 444084038, priority program SPP2265) for financial support. AQ additionally thanks the German Research Foundation through IRTG 2544 for financial support.

\frenchspacing

\appendix
\section*{Notation}
\begin{center}
	\begin{tabular}{ l l }
		
		$\N$,  $\N_0$ &  strictly positive and non-negative integers respectively \\
		
		$\R^+$, $ \R^+_0$  & strictly positive  and non-negative real numbers  respectively \\

		$[n]$ & set of integers $\{1,2,\dots,n\}$ \\
		
		
		
		$\mathcal{L}=\cup_{x\in V}\mathcal{L}(x,\,x)$
		& set of rooted oriented loops \\
		
		$\Omega=\cup_{n\ge 0}\mathcal{L}^n$
		& configuration space of the RWLS \\
		
		$\mathcal{P}_{G, U,  N, \beta}$ 
		& probability distribution of the RWLS on~$\Omega$, defined in~\eqref{eq:measure} \\

		$\mathcal{Z}_{G, U,  N, \beta}$ 
		& partition function related to the distribution~$\mathcal{P}_{G, U,  N, \beta}$, given by~\eqref{eq:partition} \\
		
		$\mathcal{E}_{G, U,  N, \beta}$ 
		& expectation relative to the distribution~$\mathcal{P}_{G, U,  N, \beta}$ \\

			   $ \boldsymbol{e}_i$  & cartesian vector, with $i \in \{1, \ldots d\}$ or   $i \in \{1, \ldots d+1\}$ \\ 
		
		$e \in \mathcal{E}$ or $\{x,y\} \in \mathcal{E}$ & undirected edges \\
		
		$(x,y) \in \mathcal{E}$ &  edge directed from $x$ to $y$ \\

		 $N \in \mathbb{N}_{>0}, \, \, \lambda \in \R^+$& respectively number of colours, and  edge-parameter \\

		$m = (m_e)_{e \in E}$ 
		& link configuration, with  $m_e\in\mathbb{N}_0$ corresponding  number of links\\
		& on the edge $e$  \\
		
		$\mathcal{M}=(\mathbb{N}_0)^E$ 
		& set of link configurations on the graph~$G$ \\

		$\pi = (\pi_x)_{x \in V}$ & pairings, with $\pi_x$ pairing the links touching $x$ or leaving them unpaired \\
		

		
		$\mathcal{P}_{x}(m)$ 
		& set of pairing configurations for $m\in\mathcal{M}$ at one vertex~$x$ \\
		
		\smash{$\mathcal{P}(m)=\prod_{x\in V}\mathcal{P}_{x}(m)$ }
		& set of pairing configurations for $m\in\mathcal{M}$ \\

		$\mathcal{W}$ 
		&  set of configurations $w= (m,\pi)$ of the RPM on $G$\\ 
		
		\smash{$\widetilde{\Wcal}$ }
		& set of configurations of the RPM with no unpaired links \\
		
		$\alpha(w)$
		& number of paths in a RPM configuration~$w \in \mathcal{W}$ \\
		
		$\P_{G,U,N,\beta}$
		& probability distribution of the RPM on~\smash{$\widetilde{\Wcal}$}, defined by~\eqref{eq:RPMprobabilitymeasure} \\
		
	$\Z_{G,U,N,\beta}$
		& partition function related to the distribution~$\P_{G,U,N,\beta}$ \\
		
		$\E_{G,U,N,\beta}$
		& expectation under the RPM measure~$\P_{G,U,N,\beta}$ \\

		
		

		
		
		
		
		
		
		
		
		
		

		
		$J(\ell)$ 
		& multiplicity of a rooted oriented loop $\ell \in \Lcal$ \\
		
		$\delta(\ell)$
		& stretch factor of a rooted oriented loop $\ell \in \Lcal$ \\

	\end{tabular}
	
\end{center}
\end{document}